\newtheorem{theorem}{Theorem}[section]
\newtheorem{lemma}[theorem]{Lemma}
\newtheorem{corollary}[theorem]{Corollary}
\newtheorem{proposition}[theorem]{Proposition}
\newtheorem{definition}[theorem]{Definition}
\newtheorem{remark}[theorem]{Remark}
\newcommand{\filledbox}{\leavevmode
  \hbox to.77778em{%
  \hfil\vbox to.675em{\hrule width.6em height.6em}\hfil}}
\newcommand{\Rm}{{\mathbb R}}
\newcommand{\eps}{\varepsilon}
\begin{document}
%\doublespacing
\tabulinesep=1.0mm
% Enter full title and short title for running headers
\title{Long time behaviour of finite-energy radial solutions to energy subcritical wave equation in higher dimensions\footnote{MSC classes: 35L05, 35L71.}}
%This work is supported by National Natural Science Foundation of China Programs 11601374, 11771325}}

\author{Ruipeng Shen\\
Centre for Applied Mathematics\\
Tianjin University\\
Tianjin, China}

\maketitle

\begin{abstract}
  We consider the defocusing, energy subcritical wave equation $\partial_t^2 u - \Delta u = -|u|^{p-1} u$ in 4 to 6 dimensional spaces with radial initial data. We define $w=r^{(d-1)/2} u$, reduce the equation above to one-dimensional equation of $w$ and apply method of characteristic lines. This gives scattering of solutions outside any given light cone as long as the energy is finite. The scattering in the whole space can also be proved if we assume the energy decays at a certain rate as $x\rightarrow +\infty$. This generalize the 3-dimensional results in Shen \cite{shenenergy} to higher dimensions.
 \end{abstract}

\section{Introduction}

\subsection{Background}

In this work we consider defocusing wave equation in dimensions $3\leq d \leq 6$.
\[
 \left\{\begin{array}{ll} \partial_t^2 u - \Delta u = - |u|^{p-1}u, & (x,t) \in \Rm^d \times \Rm; \\
 u(\cdot, 0) = u_0; & \\
 u_t (\cdot,0) = u_1. & \end{array}\right.\quad (CP1)
\]
\paragraph{Critical Sobolev spaces} The space $\dot{H}^{s_p} \times \dot{H}^{s_p-1}$ with $s_p = d/2-2/(p-1)$ is known as the critical Sobolev space of (CP1). This is because the $\dot{H}^{s_p} \times \dot{H}^{s_p-1}$ norm of initial data is preserved if we apply the natural rescaling transformation $(\mathbf{P}_\lambda u) (x,t) = \lambda^{-2/(p-1)} u(x/\lambda, t/\lambda)$. Given any constant $\lambda \in \Rm^+$, $\mathbf{P}_\lambda$ is an element in the symmetric group of (CP1), i.e. $\mathbf{P}_\lambda u$ is a solution to (CP1) as long as $u$ is. In particular, the case with $p=p_e(d)\doteq 1+4/(d-2)$ and $s_p=1$ is called the energy critical case; the case with $p=p_c(d) = 1+4/(d-1)$ and $s_p=1/2$ is called the conformal case. In this work we consider the energy subcritical, superconformal case with $1+4/(d-1)\leq p<1+4/(d-2)$.
\paragraph{Local theory} We may obtain the existence and uniqueness of local solutions by combining suitable Strichartz estimates with a fixed-point argument. More details about this kind of argument can be found in Kapitanski \cite{loc1} and Lindblad-Sogge \cite{ls}, for example. Suitable solutions also satisfy the energy conservation law
\[
 E(u,u_t) = \int_{\Rm^d}\left(\frac{1}{2}|\nabla u(x,t)|^2 + \frac{1}{2}|u_t(x,t)|^2 + \frac{1}{p+1}|u(x,t)|^{p+1}\right) dx = \hbox{Const}.
\]
\paragraph{Scattering} The global behaviour of solutions to defocusing wave equations is less complicated than those in the focusing case. It is conjectured that all solutions $u$ to (CP1) with initial data in the critical Sobolev space scatter in both two time directions. In other words, when $t \rightarrow \pm \infty$ a solution to (CP1) always becomes more and more like a free wave, i.e. a solution to the homogenous linear wave equation $\partial_t^2 u - \Delta u = 0$. In 1990's the energy critical case of this conjecture was proved by Grillakis \cite{mg1} in dimension 3 and Grillakis \cite{mg2}, Shatah-Struwe \cite{ss1,ss2} in higher dimensions. The energy supercritical case $p>p_e(d)$ and energy subcritical case $p<p_e(d)$ seem to be more difficult. Whether this conjecture is true or not in these situations remains to be an open problem, as far as the author knows, although there are many works proving the scattering of solutions with different kinds of additional assumptions on initial data or, sometimes, global behaviour of solutions. Some examples of these works are given below.
\paragraph{Scattering of bounded solutions} If the solution $u$ is known to be uniformly bounded in the critical Sobolev space for all time in its maximal lifespan, then we may apply the compactness-rigidity argument introduced in Keng-Merle \cite{kenig, kenig1} to prove the scattering of $u$. There are many works of this kind for different ranges of $d$ and $p$, sometimes with a radial assumption. Please see Duyckaerts et al. \cite{dkm2}, Kenig-Merle \cite{km}, Killip-Visan \cite{kv2} (dimension 3), Killip-Visan \cite{kv3} (all dimensions) for energy supercritical case and Dodson-Lawrie \cite{cubic3dwave},  Dodson et al. \cite{nonradial3p5}, Shen \cite{shen2} (dimension 3),  Rodriguez \cite{sub45} (dimension 4 and 5) for energy subcritical case. Please note that the results of this kind hold in both defocusing and focusing cases, except for the focusing energy critical equation. This is because a soliton is available in the critical Sobolev space for the focusing equation in the energy critical case but not in supercritical or subcritical cases. 
\paragraph{Better initial data} Scattering has also been proved with stronger assumptions on the initial data. Dodson \cite{claim1} gives a proof of the conjecture above for cubic 3D wave equation with radial data. In the non-radial case there are a lot of results assuming the energy of initial data to decay at certain rate, i.e. 
 \[
  E_\kappa(u_0,u_1)\doteq \int_{\Rm^d} (1+|x|)^\kappa \left(\frac{1}{2}|\nabla u_0(x)|^2 + \frac{1}{2}|u_1(x)|^2 + \frac{1}{p+1}|u_0(x)|^{p+1}\right) < +\infty.
 \]
For example, the conformal conservation law (see Ginibre-Velo \cite{conformal2} and Hidano \cite{conformal}) leads to the scattering of solutions for all $d\geq 3$ and $1+4/(d-1)\leq p<1+4/(d-2)$, if the initial data $(u_0,u_1)$ satisfy roughly $E_2(u_0,u_1) < + \infty$. Yang \cite{yang1} considers the energy momentum tensor and its associated current, then gives a scattering result with a weaker assumption on the initial data $E_\kappa(u_0,u_1)<+\infty$, as long as $p$ and $\kappa$ satisfy
\begin{align*}
 &\frac{1+\sqrt{d^2+4d-4}}{d-1} < p < p_e(d),& &\kappa> \max\left\{\frac{4}{p-1}-d+2,1\right\};&
\end{align*}
Recently in works \cite{shen3dnonradial, shenhd} the author introduces the inward/outward energy theory for non-radial solutions and proves the scattering result for initial data satisfying $E_\kappa(u_0,u_1)<+\infty$ with 
\[
  \kappa >  \kappa_0(d,p) = \frac{(d+2)(d+3)-(d+3)(d-2)p}{(d-1)(d+3)-(d+1)(d-3)p} \in (0,1), \quad 1+4/(d-1)< p<1+4/(d-2).
\]
\paragraph{Method of characteristic lines} All the results above have a thing in common: the initial data are assumed to be in the critical Sobolev space of (CP1). Although we do not assume this explicitly in some results above, this is actually a direct consequence of $E_\kappa(u_0,u_1)<+\infty$ by Sobolev embedding, as long as $\kappa$ is large enough. By contrast, the author in his recent paper \cite{shenenergy} proves the scattering of radial solutions to defocusing, energy subcritical 3D wave equation with a much weaker assumption on the decay rate of initial data 
\[
 E_\kappa(u_0,u_1)< +\infty, \quad \kappa > \frac{5-p}{p+1}, \quad p \in [3,5).
\]
The decay rate is so low that the initial data, thus data at any time are not necessarily contained in the space $\dot{H}^{s_p} \times \dot{H}^{s_p-1}$. As a result we use the energy space to describe the scattering instead. Namely by scattering (in the positive time direction, for example) we mean that there exists a finite-energy free wave $\tilde{u}$ so that 
\[
 \lim_{t\rightarrow +\infty} \left\|(u(\cdot,t),u_t(\cdot,t)) - (\tilde{u}(\cdot,t),\tilde{u}_t(\cdot,t))\right\|_{\dot{H}^1 \times L^2(\Rm^3)} = 0.
\]
If we only assume the finiteness of $E(u_0,u_1)$ instead of $E_\kappa(u_0,u_1)$, we can still obtain the scattering outside any given light cone, i.e. 
\[
 \lim_{t\rightarrow +\infty} \left\|(\nabla u(\cdot,t),u_t(\cdot,t)) - (\nabla \tilde{u}(\cdot,t),\tilde{u}_t(\cdot,t))\right\|_{L^2(\{x: |x|>t-\eta\})} = 0, \quad \forall \eta\in \Rm.
\]
This kind of scattering phenomena have not been discovered in previous works, as far as the author knows. To prove these results we first reduce the radial case of 3D wave equation to a one-dimensional wave equation and then apply the method of characteristic lines. More precisely, given a radial solution $u$ to (CP1) in 3-dimensional case, we may define\footnote{Given any radial function $u$, $u(r,t)$ represents the value $u(x,t)$ with $|x|=r$.} $w(r,t) = r u(r,t)$ and rewrite the equation in term of $w$
\[
 (\partial_t+\partial_r)(w_t - w_r) = \partial_t^2 w - \partial_r^2 w =  - r |u|^{p-1} u = - \frac{|w|^{p-1}w}{r^{p-1}}. 
\]
This then enable us to evaluate the variation of $w_t \pm w_r$ along characteristic lines $t \mp r = \hbox{Const}$ and obtain plentiful information about the asymptotic behaviour of solutions. Although the 3-dimensional case is indeed special, because we can not reduce the radial case of free wave equation $\partial_t^2 u - \Delta u = 0$ in other dimensions to an exact one-dimensional wave equation $\partial_t^2 w - \partial_r^2 w = 0$ in a similar way, we may still manage to generalize our results to higher dimensions. This will be the main topic of this current work. 

\subsection{The main idea}
Now let us explain how to generalize the 3D method to higher dimensions. Let $u$ be a radial solution to (CP1) with a finite energy. We may reduce the equation to a one-dimensional one by defining $w(r,t) = r^{\frac{d-1}{2}} u(r,t)$ and considering the equation that $w$ satisfies
\[
  (\partial_t+\partial_r)(w_t - w_r) = \partial_t^2 w - \partial_r^2 w = - \lambda_d r^{\frac{d-5}{2}} u - r^{\frac{d-1}{2}} |u|^{p-1} u.
\]
The constant $\lambda_d \doteq (d-1)(d-3)/4$ is determined by the dimension $d$ and will be frequently used in this work. As a result we apply the method of characteristic lines to obtain
\begin{align*}
 w_t(t_2\!-\!\eta,t_2) \!-\! w_r(t_2\!-\!\eta,t_2) & = w_t(t_1\!-\!\eta,t_1) \!-\! w_r(t_1\!-\!\eta,t_1) \\
 & \quad - \int_{t_1}^{t_2} \left(\lambda_d (t-\eta)^{\frac{d-5}{2}} u(t\!-\!\eta,t) + (t\!-\!\eta)^{\frac{d-1}{2}} |u|^{p-1} u(t\!-\!\eta,t)\right) dt
\end{align*}
for all $t_2>t_1>\eta$. Next we may verify that the integral above vanishes as $t_1,t_2 \rightarrow +\infty$ by the energy flux formula. Thus the function 
\[
 g_+(\eta) = \frac{1}{2} \lim_{t\rightarrow +\infty} \left[w_t(t-\eta,t) - w_r(t-\eta,t)\right].
\]
is always well-defined. This convergence helps to give the asymptotic behaviour of solution $w$ and $u$ as $t \rightarrow +\infty$. In general, the argument is similar to the 3-dimensional case. However, we have to overcome additional difficulties in higher dimensions $d\geq 4$. In fact, if we hope that $u$ scatters, i.e. there exists a free wave $\tilde{u}$ so that $\|(u,u_t)-(\tilde{u}, \tilde{u}_t)\|_{\dot{H}^1 \times L^2}\rightarrow 0$ as $t \rightarrow +\infty$, then $\tilde{w} = r^{(d-1)/2} \tilde{u}$ has to satisfies 
\begin{equation}
  \lim_{t\rightarrow +\infty} \left[\tilde{w}_t(t-\eta,t) - \tilde{w}_r(t-\eta,t)\right] =  \lim_{t\rightarrow +\infty} \left[w_t(t-\eta,t) - w_r(t-\eta,t)\right] = 2g_+(\eta) \label{to solve}
\end{equation}
Thus it is necessary to show the existence of such a free wave $\tilde{u}$ with prescribed asymptotic behaviour. In the 3-dimensional case, the function $\tilde{w}$ satisfies a simple equation $\partial_t^2 \tilde{w} - \partial_r^2 \tilde{w} = 0$. Therefore both $\tilde{w}$ and $\tilde{u}$ can be given explicitly in term of $g_+$:
\[
 \tilde{w}(r,t) = -  \int_{t-r}^{t+r} g_+(\eta) d\eta. \; \Rightarrow \; \tilde{u}(x,t) = - \frac{1}{|x|} \int_{t-|x|}^{t+|x|} g_+(\eta) d\eta. 
\]
In the higher dimensional case $d \geq 4$, however, $\tilde{w}$ satisfies the equation $\partial_t^2 \tilde{w} - \partial_r^2 \tilde{w} = - \lambda_d r^{-2} \tilde{w}$. The additional term $- \lambda_d r^{-2} \tilde{w}$ makes it much more difficult to solve $\tilde{w}$ from its asymptotic behaviour. Thus we will not solve $\tilde{u}$ explicitly. Instead we prove that given any suitable function $g_+(\eta)$ we may find a radial free wave $\tilde{u}$ so that $\tilde{w} = r^{(d-1)/2} \tilde{u}$ satisfies \eqref{to solve}. This is in fact the surjective property of the radiation field. Please see Section \ref{sec: radiation} for more details. 

\subsection{Main Results}

Now we give the statements of main theorems and then attach a few remarks. Throughout this paper we always assume $3\leq d\leq 6$, $1+4/(d-1)\leq p < 1+4/(d-2)$. The author would like to mention that the same idea still works in very high dimensions $d \geq 7$. We focus on the cases $3\leq d\leq 6$ in order to avoid technical difficulties in the local theory, as explained in Remark \ref{d36}. 

\begin{theorem}[Long time behaviour with finite energy] \label{main 1}
Assume that $3\leq d\leq 6$ and $1+4/(d-1)\leq p < 1+4/(d-2)$. Let $u$ be a radial solution to (CP1) with a finite energy $E$. Then there exists a free wave $\tilde{u}$ with energy\footnote{Energy conservation law of a free wave is well known: $\tilde{E} = (1/2)\|(\tilde{u}(\cdot,t), \tilde{u}_t(\cdot,t))\|_{\dot{H}^1 \times L^2}^{2}$ is independent of $t$.} $\tilde{E} \leq E$ so that 
\begin{itemize}
 \item [(a)] The solution $u$ scatters outside any given forward light cone $\{(x,t)\in \Rm^d \times \Rm: t - |x| = \eta\}$ in the positive time direction. Namely
 \[
  \lim_{t \rightarrow +\infty} \int_{|x|>t-\eta} \left(|\nabla u(x,t)-\nabla \tilde{u}(x,t)|^2 + |u_t(x,t)-\tilde{u}_t(x,t)|^2\right) dx = 0, \quad \forall \eta\in \Rm.
 \]
 \item [(b)] The solution $u$ scatters in the energy space $\dot{H}^1 \times L^2(\Rm^d)$, i.e. we have
 \[
  \lim_{t \rightarrow +\infty} \left\|(u(\cdot,t), u_t(\cdot,t))-(\tilde{u}(\cdot,t), \tilde{u}_t(\cdot,t))\right\|_{\dot{H}^1\times L^2} = 0,
 \]
 if and only if $\tilde{E}= E$.
\end{itemize}
The asymptotic behaviour of solution in the negative time direction is similar.
\end{theorem}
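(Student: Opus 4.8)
The plan is to implement the one-dimensional reduction sketched above and then run the method of characteristic lines. Set $w=r^{(d-1)/2}u$, so that $\partial_t^2 w-\partial_r^2 w=F$, $F=-\lambda_d r^{(d-5)/2}u-r^{(d-1)/2}|u|^{p-1}u$. The first step is to record the energy flux estimates: integrating the local identity $\partial_t\bigl(\tfrac12|\nabla u|^2+\tfrac12 u_t^2+\tfrac1{p+1}|u|^{p+1}\bigr)=\nabla\cdot(u_t\nabla u)$ over the solid forward cone $\{|x|<t-\eta\}$ shows that the interior energy is nondecreasing in $t$ and bounded by $E$, hence
\[
 \int_\eta^\infty (t-\eta)^{d-1}\Bigl(\tfrac12(u_r+u_t)^2+\tfrac1{p+1}|u|^{p+1}\Bigr)(t-\eta,t)\,dt\lesssim_d E,\qquad\forall\,\eta\in\Rm .
\]
I would also note the change-of-variables identity $\int_0^\infty r^{d-1}(u_r^2+u_t^2)\,dr=\int_0^\infty\bigl(w_r^2+w_t^2+\lambda_d w^2/r^2\bigr)\,dr$, which (since $\lambda_d\ge0$) gives the uniform-in-$t$ bound $\|w_r(\cdot,t)\|_{L^2}^2+\|w_t(\cdot,t)\|_{L^2}^2\lesssim E$, together with the finite speed of propagation and local well-posedness facts needed to make the traces of $u$ and $w$ on light cones meaningful.

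Next I would integrate $(\partial_t+\partial_r)(w_t-w_r)=F$ along $r=t-\eta$ and estimate $\int_T^\infty|F(t-\eta,t)|\,dt$ by H\"older's inequality in $t$ against the flux bound. This is where the hypotheses are used: the nonlinear contribution requires $(d-1)(p-1)>2$, i.e.\ $p\ge 1+4/(d-1)$, and the linear contribution---absent when $d=3$, where $\lambda_3=0$---requires $p<(d+3)/(d-3)$, which is implied by $p<1+4/(d-2)$ for $4\le d\le6$. Hence $\int_T^\infty|F(t-\eta,t)|\,dt\to0$ as $T\to\infty$, uniformly for $\eta$ in compact sets, so
\[
 g_+(\eta):=\tfrac12\lim_{t\to\infty}\bigl[w_t(t-\eta,t)-w_r(t-\eta,t)\bigr]
\]
exists, with the effective identity $(w_t-w_r)(t-\eta,t)=2g_+(\eta)-\int_t^\infty F(s-\eta,s)\,ds$; Fatou's lemma combined with the slice-wise $L^2$ bound then yields $g_+\in L^2(\Rm)$ and $\|g_+\|_{L^2}^2\lesssim E$. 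I would then appeal to the surjectivity of the radiation field (Section~\ref{sec: radiation}): given this $g_+$ there is a radial finite-energy free wave $\tilde u$ whose profile $\tilde w=r^{(d-1)/2}\tilde u$ satisfies $\lim_{t\to\infty}(\tilde w_t-\tilde w_r)(t-\eta,t)=2g_+(\eta)$, with energy $\tilde E$ a fixed dimensional multiple of $\|g_+\|_{L^2}^2$, hence $\tilde E\le E$. I expect this to be the main obstacle: in dimensions $d\ge4$ the profile solves $\partial_t^2\tilde w-\partial_r^2\tilde w=-\lambda_d r^{-2}\tilde w$, and the long-range potential $-\lambda_d r^{-2}$ rules out the explicit formula available when $d=3$, so one has to invert the asymptotic (inverse-scattering) map for the perturbed one-dimensional wave operator; the free-wave counterparts of the flux bound, finite speed of propagation, and dispersive decay used below transfer to $\tilde u$ either by the same arguments or by Strichartz estimates.

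For part (a) let $v=u-\tilde u$ and, after passing to the variable $w$, decompose $\int_{|x|>t-\eta}\bigl(|\nabla v|^2+v_t^2\bigr)\,dx$ into the $L^2(r\,dr)$-sized contributions of $(v_t-v_r)$ and $(v_t+v_r)$ plus lower-order terms. For the outgoing component, the effective identity gives $(v_t-v_r)(t-\eta',t)=-\int_t^\infty(F-\tilde F)(s-\eta',s)\,ds$ with $\tilde F=-\lambda_d r^{(d-5)/2}\tilde u$, which tends to $0$ uniformly for $\eta'\in[-R,\eta]$ by the estimates above (for $u$) and dispersive/Strichartz estimates (for $\tilde u$), while on $\{|x|>t+R\}$ the monotonicity of the exterior energy bounds the contribution by the energies of $u$ and $\tilde u$ in $\{|x|>R\}$ at time $0$, which are small; hence $\int_{|x|>t-\eta}(v_t-v_r)^2 r^{d-1}\,dr\to0$. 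For the incoming component, finite speed of propagation shows that the incoming radiation of both $u$ and $\tilde u$ concentrates near the spatial origin as $t\to\infty$, so $\int_{|x|>t-\eta}(v_t+v_r)^2 r^{d-1}\,dr\to0$; Hardy's inequality and the flux bound dispose of the terms $w^2/r^2$ and of the boundary term at $r=t-\eta$, and (a) follows.

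For part (b): if $\tilde E=E$, then combining (a) with the monotonicity of the interior energy shows that the energy trapped inside every forward cone vanishes in the double limit $\eta\to+\infty$, $t\to+\infty$ (and so does the associated potential energy, by a local-energy-decay/Morawetz estimate); together with the fact that a finite-energy free wave carries asymptotically no energy in $\{|x|<(1-\eps)t\}$, this upgrades (a) to $\|(v,v_t)(t)\|_{\dot H^1\times L^2}\to0$. Conversely, if $u$ scatters in the energy space, its limit profile must be this same $\tilde u$, so $\int_{|x|<(1-\eps)t}(|\nabla u|^2+u_t^2)\,dx\to0$; combined with local decay of the potential energy one gets $\|u(t)\|_{L^{p+1}}\to0$, and then the energy identity $\tfrac12\|(u,u_t)(t)\|_{\dot H^1\times L^2}^2=E-\tfrac1{p+1}\|u(t)\|_{L^{p+1}}^{p+1}$ compared with the conserved free-wave energy forces $\tilde E=E$. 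Finally, the negative-time statements follow by applying the whole argument to $u(-t,\cdot)$, equivalently by running the characteristics $r=\eta-t$ and defining $g_-$ analogously.
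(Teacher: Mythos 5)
Your overall strategy is the same as the paper's: reduce to $w=r^{(d-1)/2}u$, control the variation of $w_t\mp w_r$ along characteristics by the flux bounds on the light cone, extract $g_+$, invoke surjectivity of the radial radiation field to produce $\tilde u$, and then prove (a) by splitting $\{|x|>t-\eta\}$ into a bounded-width annulus plus a far region, and (b) by energy comparison plus Morawetz. Some of your variants are fine: estimating the linear term $\lambda_d r^{(d-5)/2}u$ against the $|u|^{p+1}$ flux (which needs $p<(d+3)/(d-3)$, true here) works, although the paper instead uses the Hardy-type flux bound $\int_{t-|x|=\eta}|u|^2/|x|^2\,dS\lesssim_d E$, which needs no condition on $p$ at all.

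The genuine gap is that you treat the surjectivity of the radiation field as a black box (``appeal to Section \ref{sec: radiation}'' / ``invert the asymptotic map''), while this is precisely the new difficulty in $d\ge4$ and a substantial part of the proof: one must actually produce, for each $g\in L^2$, a radial finite-energy free wave whose outgoing profile is $g$, \emph{and} with the sharp isometry $\tilde E=c_d\|g_+\|_{L^2}^2$. The paper does this by taking $\tilde u=-|x|^{-(d-1)/2}\int_{t-|x|}^{t+|x|}g(\eta)\,d\eta$ for $g\in C_0^\infty$, checking that $(\partial_t^2-\Delta)\tilde u=\lambda_d|x|^{-2}\tilde u$ lies in $L^1L^2$ on $[2R,\infty)$ with norm $O(R^{-1/2})$, correcting it to an exact free wave via Strichartz/completeness (Lemma \ref{u modification}), identifying its profile with $g$ (Lemma \ref{identity of radiation field}), and then using density plus the isometry (Proposition \ref{global L2 linear}) to get bijectivity (Proposition \ref{homomorphism}). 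Without this construction (or an explicit citation of Friedlander/Duyckaerts--Kenig--Merle's radiation field theorem with the exact constant), your claim $\tilde E\le E$ is also not established: ``$\|g_+\|_{L^2}^2\lesssim E$ and $\tilde E$ is a dimensional multiple of $\|g_+\|_{L^2}^2$'' does not give $\tilde E\le E$; you need the matched constants $\|g_+\|_{L^2}^2\le E/c_d$ (from Lemma \ref{relation L2 uv} after dropping the $v_-$ and $\lambda_d|u|^2/|x|^2$ terms) together with $\tilde E=c_d\|g_+\|_{L^2}^2$, and the iff in part (b) is meaningful only with these. A secondary weak point: in part (a) you dispose of the incoming component by ``finite speed of propagation,'' but the domain of dependence of the exterior region is the whole space, so this alone proves nothing; what is needed is the mirrored characteristics estimate for $v_-=w_t+w_r$ (Proposition \ref{upper bound of variation v}) showing $v_-(r,t)\approx 2g_-(t+r)$ with $g_-\in L^2$ (equivalently, anchoring at $t=0$ and using that the relevant incoming data sit in the $L^2$ tail), exactly as in Propositions \ref{introduction of g} and \ref{middle convergence defocusing}; since you do set up this machinery for $v_+$, this is fixable, but as written the step is unjustified.
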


\begin{theorem}[Scattering with energy decay] \label{main 2}
Assume that $3\leq d\leq 6$ and $1+4/(d-1)\leq p < 1+4/(d-2)$. Let $u$ be a radial solution to (CP1) with initial data $(u_0,u_1) \in \dot{H}^1 \times L^2(\Rm^d)$ so that the following inequality holds for a constant $\kappa \geq \kappa_0(d,p) \doteq \frac{4-(d-2)(p-1)}{p+1}$:
 \[
  E_\kappa (u_0,u_1) \doteq \int_{\Rm^d} (1+|x|^\kappa)\left(\frac{1}{2}|\nabla u_0(x)|^2 + \frac{1}{2}|u_1(x)|^2 + \frac{1}{p+1}|u_0(x)|^{p+1}\right) dx < +\infty.
 \]
Then the solution $u$ scatters in the energy space $\dot{H}^1\times L^2$ in both two time directions. 
\end{theorem}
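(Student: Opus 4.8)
The plan is to derive Theorem~\ref{main 2} from Theorem~\ref{main 1}. Since Theorem~\ref{main 1}(b) already identifies scattering in $\dot{H}^1\times L^2$ with the single equality $\tilde{E}=E$, and the negative time direction is handled identically, everything reduces to showing that the free wave $\tilde{u}$ furnished by Theorem~\ref{main 1} carries \emph{all} of the energy. I would first recast this as the vanishing of an interior energy. Splitting the conserved energy at the cone $|x|=t-\eta$ gives $E=E_{\mathrm{ext}}(\eta,t)+E_{\mathrm{int}}(\eta,t)$; a short computation with the energy identity — in which the radial hypothesis annihilates the angular-derivative contribution to the flux — shows that $t\mapsto E_{\mathrm{int}}(\eta,t)$ is non-decreasing with
\[
  E_{\mathrm{int}}(\eta,+\infty)=\big|\mathbb{S}^{d-1}\big|\int_\eta^{\infty}(t-\eta)^{d-1}\Big(\tfrac12\big(u_t+\partial_r u\big)^2+\tfrac{1}{p+1}|u|^{p+1}\Big)\Big|_{r=t-\eta}\,dt=:\mathcal{F}(\eta).
\]
Combining this monotonicity with Theorem~\ref{main 1}(a) and the elementary fact that a finite-energy free wave satisfies $E_{\mathrm{ext}}(\eta)\uparrow\tilde{E}$ as $\eta\to+\infty$, one sees that $\tilde{E}=E$ is equivalent to $\mathcal{F}(\eta)\to 0$ as $\eta\to+\infty$, and this is what I would prove.

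The next step is to analyse $\mathcal{F}(\eta)$ through the reduced equation for $w=r^{\frac{d-1}{2}}u$. Up to the lower-order piece $\tfrac{d-1}{2}r^{-1}u$, the combination $u_t+\partial_r u$ equals $r^{-\frac{d-1}{2}}(w_t+w_r)$, the \emph{incoming} part of $w$, which obeys the transport equation $(\partial_t-\partial_r)(w_t+w_r)=-\lambda_d r^{\frac{d-5}{2}}u-r^{\frac{d-1}{2}}|u|^{p-1}u$ along the incoming characteristics $r+t=\mathrm{const}$. Since this transport operator has no zeroth-order term, tracing the rays back to $t=0$ writes $(w_t+w_r)(r,t)$ as the Cauchy datum $(w_t+w_r)(r+t,0)$ plus the integral of the source along the ray. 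The Cauchy-datum part of $\mathcal{F}(\eta)$ is dominated by $\tfrac12\int_\eta^{\infty}(w_t+w_r)^2(\rho,0)\,d\rho$, which already tends to $0$ as $\eta\to+\infty$ by the mere finiteness of the energy; so the whole matter reduces to estimating the source contribution, together with the residual lower-order terms coming from $r^{-2}w$ and from $|u|^{p+1}$ near the cone.

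This is where the decay hypothesis $E_\kappa(u_0,u_1)<+\infty$ enters, and it is the main obstacle. What one needs is a weighted space-time estimate — morally of conformal/Morawetz type, but carried out on the one-dimensional equation for $w$ along characteristic lines, which is what makes it sharp within the radial class — showing that $\lambda_d r^{\frac{d-5}{2}}u+r^{\frac{d-1}{2}}|u|^{p-1}u$ is small enough, in the appropriate weighted $L^2$ sense, for its contribution to $\mathcal{F}(\eta)$ to vanish as $\eta\to+\infty$. The exponent $\kappa_0(d,p)=\frac{4-(d-2)(p-1)}{p+1}$ is precisely the threshold at which the $t=0$ boundary term $\int_{\Rm^d}(1+|x|^\kappa)|u_0|^{p+1}\,dx$ of that identity is finite and the error terms generated by the curvature potential $-\lambda_d r^{-2}w$ remain summable; note this is well below the weight $\kappa=2$ that the classical conformal conservation law would demand, which is exactly why the characteristic-line method beats it here. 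Relative to the $3$-dimensional argument of \cite{shenenergy}, the two genuinely new difficulties are: (i) the presence of the term $-\lambda_d r^{-2}w$ in the equation for $w$ when $d\geq 4$, which must be absorbed both in the transport analysis and in the weighted estimate; and (ii) the fact that the data need not lie in the critical space $\dot{H}^{s_p}\times\dot{H}^{s_p-1}$, so every boundary and error term has to be carried with the minimal weight rather than discarded by Sobolev embedding. Once the weighted estimate is secured, the transport representation gives $\mathcal{F}(\eta)\to 0$, hence $\tilde{E}=E$, and Theorem~\ref{main 1}(b) concludes the proof.
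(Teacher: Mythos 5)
Your reduction is fine as far as it goes: given Theorem \ref{main 1}, scattering is indeed equivalent to the statement that no energy accumulates strictly inside forward light cones, i.e.\ that the total flux $\mathcal{F}(\eta)$ through $t-|x|=\eta$ (equivalently $\lim_{t\to\infty}E(t;B(0,t-\eta))$) tends to $0$ as $\eta\to+\infty$; and your observation that the radial flux density is $\tfrac12(u_t+u_r)^2+\tfrac1{p+1}|u|^{p+1}$ and that the $t=0$ datum contribution along incoming characteristics vanishes for free are both correct. The genuine gap is the last step, which is the entire content of the theorem: you never state, let alone prove, the ``weighted space-time estimate of conformal/Morawetz type on the one-dimensional equation'' that is supposed to make the source contribution to $\mathcal{F}(\eta)$ vanish, and the mechanism you invoke for the threshold (finiteness of the weighted boundary term $\int(1+|x|^\kappa)|u_0|^{p+1}dx$ and summability of the errors from $-\lambda_d r^{-2}w$) cannot produce the exponent $\kappa_0(d,p)=\frac{4-(d-2)(p-1)}{p+1}$: that boundary term is finite by hypothesis for \emph{every} $\kappa$ with $E_\kappa<+\infty$, so it singles out nothing. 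Worse, your back-tracing of $w_t+w_r$ along incoming rays hits a concrete obstruction near the cone tip: for $t$ close to $\eta$ the ray from $(2t-\eta,0)$ to $(t-\eta,t)$ passes through small radii at times comparable to $\eta$, exactly where the source $-\lambda_d r^{\frac{d-5}{2}}u-r^{\frac{d-1}{2}}|u|^{p-1}u$ is not small and where the light-cone flux bounds of Proposition \ref{energy flux integral} give no decay. Controlling that region is precisely the statement that the energy near the spatial origin decays at late times, which is the hard part and is not supplied by the transport representation.

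The paper closes this gap by a different mechanism, and the numerology of $\kappa_0$ comes from matching two separate ingredients, neither of which appears in your outline. First, the characteristic-line analysis gives the quantitative rate $|v_\pm-2g_\pm|\lesssim_{d,E}r^{-\beta(d,p)}$ with $\beta(d,p)=\frac{(d-1)(p-1)-2}{2(p+1)}$ (Proposition \ref{upper bound of variation v}), which controls the annulus $t-ct^{2\beta(d,p)}<|x|<t+R$ up to an error $O(c)$ (Proposition \ref{middle convergence defocusing}, Lemma \ref{convergence u tilde}). Second, the region $|x|<t-ct^{1-\kappa}$ is handled by Proposition \ref{center decay}, whose proof is not a conformal identity at all: it combines finite speed of energy propagation with the Morawetz-derived Corollary \ref{energy distribution} and the decay of the tail $E_\kappa(u_0,u_1;r)$, giving $E(t;B(0,t-ct^{1-\kappa}))\to0$. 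Scattering then follows because $\kappa\geq\kappa_0(d,p)=1-2\beta(d,p)$ means $t^{1-\kappa}\leq t^{2\beta(d,p)}$, so the two regions overlap and $c$ is arbitrary; note the paper concludes directly, without passing through Theorem \ref{main 1}(b). To repair your proposal you would need to prove an analogue of Proposition \ref{center decay} (interior energy decay from $E_\kappa<\infty$ via Morawetz and finite propagation speed) and then either convert it into the vanishing of $\mathcal{F}(\eta)$ or, more naturally, adopt the paper's three-region splitting; as written, the decisive estimate is assumed rather than proved.
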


\begin{remark}
 Finite-energy free wave $\tilde{u}$ that satisfies conclusion part (a) of Theorem \ref{main 1} is unique. Because the difference $\bar{u}$ of two such free waves satisfies  
 \[
  \lim_{t \rightarrow +\infty} \int_{|x|>t-\eta} \left(|\nabla \bar{u}(x,t)|^2 + |\bar{u}_t(x,t)|^2\right) dx = 0, \quad \forall \eta \in \Rm,
 \]
thus has to be zero, according to Proposition \ref{asymptotic linear}. As a result, if a finite-energy radial solution $u$ to (CP1) does scatter in the positive time direction, then it has to approach the free wave $\tilde{u}$ given in Theorem \ref{main 1}.
\end{remark}

\begin{remark}
 We usually discuss the scattering of solutions in the critical Sobolev space $\dot{H}^{s_p} \times \dot{H}^{s_p-1}(\Rm^d)$. In this work, however, we use the energy space $\dot{H}^1 \times L^2$ instead. This is because our assumptions on the initial data are not sufficient to guarantee that $(u_0,u_1) \in \dot{H}^{s_p} \times \dot{H}^{s_p-1}(\Rm^d)$. For example, we may pick an arbitrarily small positive constant $\eps$ and choose a radial $C^\infty(\Rm^d)$ function $u_0$ so that 
\begin{align*}
 &u_0(x) = |x|^{-\frac{2(p+d+1)}{(p+1)^2}-\eps},& &|\nabla u_0(x)| \simeq |x|^{-\frac{2(p+d+1)}{(p+1)^2}-1-\eps},& &x \gg 1.&
\end{align*}
Then the initial data $(u_0,0)$ and $\kappa = \kappa_0(d,p)$ satisfy the conditions of both Theorem \ref{main 1} and Theorem \ref{main 2}. However, we also have $u_0 \notin L^{\frac{d(p-1)}{2}}(\Rm^d)$ if $\eps$ is sufficiently small because 
\[
 \frac{2(p+d+1)}{(p+1)^2} \cdot \frac{d(p-1)}{2} = \frac{(p+1)^2 + (d-2)(p-p_e(d))}{(p+1)^2} \cdot d < d,
\]
It immediately follows that $u_0 \notin \dot{H}^{s_p}(\Rm^d)$ since we have the Sobolev embedding $\dot{H}^{s_p}(\Rm^d) \hookrightarrow L^{\frac{d(p-1)}{2}}(\Rm^d)$. 
\end{remark}

\begin{remark}
 If $d=3$, the lower bound $\kappa_0(d,p)$ given in Theorem \ref{main 2} remains the same as in the 3-dimensional paper \cite{shenenergy}. But the endpoint case $\kappa = \kappa_0(d,p)$, which is prohibited in \cite{shenenergy}, is also allowed in this work. Although both works use the Morawetz estimates to deal with the energy that is located inside but far from the light cone, we adopt a more careful method of argument in this work thus improve the results slightly. 
 \end{remark}

\begin{remark}
 An application of the inward/outward energy theory as given in Shen \cite{shenhd} might slightly simplify the argument in this work. But the main result, i.e. the minimal decay rate of energy $\kappa_0(d,p)$ can not be further improved by the inward/outward energy theory.
\end{remark}

\subsection{Structure of this paper}
Before we conclude this section, we give the main topic of each section as below. Section 2 gives preliminary results. We collect necessary notations, technical lemmata, local theory, energy flux formula and Morawetz estimates in this section. Then in Section 3 we reduce the radial wave equation in higher dimension to one-dimensional wave equation and then utilize the method of characteristic lines to gather information about asymptotic behaviour of solutions. Next in Section 4 we show that given any solution $u$ to (CP1), there exists a free wave whose asymptotic behaviour is similar to that of $u$. Finally we prove the scattering results in the main theorems in the last section. 
\section{Preliminary Results}

\subsection{Notations}
We first introduce a few notations that will be used throughout this paper. 
\paragraph{Radial functions} Let $u(x)$ be a radial function defined in $\Rm^d$. We use the notation $u(r)$ for the value $u(x)$ at any point $x$ with $|x|=r$. Similarly we use the notation $u(r,t)$ for a spatially radial function $u(x,t)$. 
\paragraph{Sphere measure} In this work $\sigma_R$ represents the regular measure of the sphere $\{x\in \Rm^d: |x|=R\}$. We also define $c_d$ to be the area of the unit sphere $\mathbb{S}^{d-1}$. Thus we have the following identities for any radial function $f(x)$
\begin{align*}
 &\int_{|x|=r} f(x) d\sigma_r(x) = c_d r^{d-1} f(r);& &\int_{\Rm^d} f(x) dx = c_d \int_0^\infty f(r)r^{d-1} dr.&
\end{align*}
\paragraph{The $\lesssim$ symbol} The notation $A \lesssim B$ means that there exists a constant $c$, so that the inequality $A \leq cB$ holds. We may also put subscript(s) to indicate that the constant $c$ depends on the given subscript(s) but nothing else. In particular, the symbol $\lesssim_1$ is used if $c$ is an absolute constant. Similarly we use the notation $A \simeq B$ to indicates that there exists two constants $c_1,c_2$, so that $c_1 B \leq A \leq c_2 B$.
\paragraph{The $\doteq$ symbol} This symbol means that the formula in the right hand side is actually a definition of the notation in the left hand side.
\paragraph{Linear wave propagation operator} Let $(u_0,u_1)$ be initial data. We define $\mathbf{S}_L(u_0,u_1)$ to be the solution $u$ to free wave equation with initial data $(u_0,u_1)$. We may also specify a time $t$ and define $\mathbf{S}_L(t) (u_0,u_1) = (u(\cdot,t), u_t(\cdot,t))$ to be the data of solution $u$ at time $t$. 

\subsection{Technical Lemmata}
\begin{lemma}[Pointwise Estimate] \label{pointwise estimate 2}
 Assume $d\geq 3$. All radial $\dot{H}^1(\Rm^d)$ functions $u$ satisfy 
 \[
  |u(r)| \lesssim_d  r^{-\frac{d-2}{2}} \|u\|_{\dot{H}^1}, \qquad r>0.
 \]
 If $u$ also satisfies $u \in L^{p+1} (\Rm^d)$, then its decay is stronger as $r \rightarrow +\infty$.
 \[
  |u(r)| \lesssim_d r^{-\frac{2(d-1)}{p+3}} \|u\|_{\dot{H}^1}^{\frac{2}{p+3}} \|u\|_{L^{p+1}}^{\frac{p+1}{p+3}}, \qquad r>0.
 \]
\end{lemma}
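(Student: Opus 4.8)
The plan is to establish the two pointwise bounds by the standard "fundamental theorem of calculus plus Cauchy–Schwarz/Hölder in the radial variable" technique. For the first inequality, fix $r>0$ and use the fact that a radial $\dot H^1$ function decays at infinity, so we may write
\[
 u(r) = -\int_r^\infty u'(s)\, ds.
\]
Then I would insert a weight $s^{d-1}$ to match the radial measure: write $u'(s) = \bigl(s^{(d-1)/2} u'(s)\bigr) s^{-(d-1)/2}$ and apply Cauchy–Schwarz on $(r,\infty)$, obtaining
\[
 |u(r)| \le \Bigl(\int_r^\infty |u'(s)|^2 s^{d-1}\, ds\Bigr)^{1/2} \Bigl(\int_r^\infty s^{-(d-1)}\, ds\Bigr)^{1/2}.
\]
The first factor is controlled by $c_d^{-1/2}\|u\|_{\dot H^1}$ (using the radial identity $\int_{\Rm^d}|\nabla u|^2 = c_d\int_0^\infty |u'(s)|^2 s^{d-1} ds$ for radial $u$); the second factor equals a constant times $r^{-(d-2)/2}$ since $d\ge 3$. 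This yields the claimed $r^{-(d-2)/2}$ decay with a dimensional constant.

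For the second, stronger bound, the idea is to interpolate the $\dot H^1$ information against the $L^{p+1}$ information so as to extract a faster decay rate. Rather than integrating $u'$ all the way from $r$ to $\infty$, I would instead use a telescoping/dyadic argument or directly compare $|u(r)|$ with an average. One clean route: from $u(r) = -\int_r^\infty u'(s)\,ds$ split the integrand using three factors, $|u'(s)| = |u'(s)| \cdot |u(s)|^{\theta}\cdot |u(s)|^{-\theta}$ is awkward; instead I would bound $|u(r)|$ by combining the trivial bound just derived with a second estimate coming from $L^{p+1}$. Concretely, for $\rho>r$ write
\[
 |u(r)| \le |u(\rho)| + \int_r^\rho |u'(s)|\, ds,
\]
integrate this in $\rho$ against the measure $\rho^{d-1}\,d\rho$ over $(r,2r)$, divide by $\int_r^{2r}\rho^{d-1} d\rho \simeq r^d$, and apply Hölder to the resulting two terms: the averaged $|u(\rho)|$ term is bounded by $r^{-d/(p+1)}\|u\|_{L^{p+1}(r<|x|<2r)}$ up to constants, and the double integral of $|u'|$ is bounded, again by Cauchy–Schwarz with the weight $s^{d-1}$, by $r \cdot r^{-(d-2)/2}\|u\|_{\dot H^1} \cdot(\text{const})$ roughly. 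Optimizing the split point and exponents — i.e. choosing how to balance the power of $\|u\|_{\dot H^1}$ against the power of $\|u\|_{L^{p+1}}$ so that the total homogeneity works out to $-\,2(d-1)/(p+3)$ — produces the exponents $2/(p+3)$ and $(p+1)/(p+3)$ in the statement. One checks the scaling is consistent: under $u \mapsto \lambda^{-2/(p-1)}u(\cdot/\lambda)$ both sides transform the same way, which is a useful sanity check on the exponents.

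The main obstacle will be getting the interpolation in the second estimate to land on exactly the exponents $\frac{2}{p+3}$, $\frac{p+1}{p+3}$, and the weight power $\frac{2(d-1)}{p+3}$, with the decay valid for \emph{all} $r>0$ rather than only for large $r$. The delicate point is the choice of the comparison scale (here $2r$, but more generally $\mu r$) and verifying that the $L^{p+1}$ norm appearing is a \emph{local} norm on an annulus, which can then be bounded by the global $\|u\|_{L^{p+1}}$; one must make sure no logarithmic losses or endpoint degeneracies appear when $d\ge 3$ and $1+4/(d-1)\le p<1+4/(d-2)$. Once the homogeneity bookkeeping is done correctly, the constants depend only on $d$ (and implicitly on $p$, which is itself a function of $d$ in the admissible range), completing the proof.
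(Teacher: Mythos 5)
Your proof of the first inequality is exactly the paper's: fundamental theorem of calculus from $r$ to $\infty$ (using $\liminf_{r\to\infty}|u(r)|=0$) plus Cauchy--Schwarz against the weight $s^{d-1}$. For the second inequality your route is genuinely different. You bound $|u(r)|$ by a weighted average of $|u|$ over a window to the right of $r$ plus the integrated oscillation, and then optimize the window; the paper argues in the mirror direction: from the modulus-of-continuity bound $|u(r_0)-u(r)|\lesssim_d [r^{-(d-1)}(r_0-r)]^{1/2}\|u\|_{\dot{H}^1}$ it deduces $|u|\geq |u(r_0)|/2$ on an interval of length $\delta \simeq |u(r_0)|^2 r_0^{d-1}/\|u\|_{\dot{H}^1}^2$, and then \emph{lower}-bounds $\|u\|_{L^{p+1}}^{p+1}$ by the mass on that interval; there the first inequality is needed to guarantee $\delta\leq r_0/2$. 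Both proofs rest on the same Cauchy--Schwarz oscillation estimate and in fact select the same window length, so they are close cousins; yours has the mild advantage of not needing the first inequality as an input, the paper's avoids any optimization step.

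One caveat you should address when writing this up: as literally written, averaging over the fixed annulus $(r,2r)$ only yields $|u(r)|\lesssim_d r^{-d/(p+1)}\|u\|_{L^{p+1}}+r^{-(d-2)/2}\|u\|_{\dot{H}^1}$, and a sum bound does not imply the stated geometric-mean bound (the product is smaller than the corresponding convex combination of the two terms). So the optimization you allude to must actually be carried out, and the window length must be allowed to depend on $u$, not just on $r$: averaging over $(r,r+h)$ and using $\int_r^{r+h}\rho^{d-1}d\rho\geq hr^{d-1}$, $\int_r^{r+h}s^{-(d-1)}ds\leq hr^{-(d-1)}$, one gets for every $h>0$ that $|u(r)|\lesssim_d (hr^{d-1})^{-1/(p+1)}\|u\|_{L^{p+1}}+(hr^{-(d-1)})^{1/2}\|u\|_{\dot{H}^1}$; choosing $h\simeq r^{\frac{(d-1)(p-1)}{p+3}}\big(\|u\|_{L^{p+1}}/\|u\|_{\dot{H}^1}\big)^{\frac{2(p+1)}{p+3}}$ balances the two terms and gives exactly $r^{-\frac{2(d-1)}{p+3}}\|u\|_{\dot{H}^1}^{\frac{2}{p+3}}\|u\|_{L^{p+1}}^{\frac{p+1}{p+3}}$. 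Note that a window of the form $\mu r$ with a constant $\mu$ cannot work, but with the $u$-dependent choice above no constraint $h\leq r$ is needed, there are no logarithmic losses, and the bound holds for all $r>0$; with that step made explicit your argument is complete.
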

\begin{proof}
 First of all, we have 
 \begin{align}
  |u(r_2) - u(r_1)| & = \left|\int_{r_1}^{r_2} u_r (r) dr\right| \leq \left(\int_{r_1}^{r_2} r^{-(d-1)} dr\right)^{1/2} \left(\int_{r_1}^{r_2} r^{d-1} |u_r|^2 dr\right)^{1/2} \label{variation of u 1}\\
  & \leq c_d^{-1/2}[r_1^{-(d-1)}(r_2-r_1)]^{1/2} \|u\|_{\dot{H}^1}. \label{variation of u}
 \end{align}
An $\dot{H}^1(\Rm^d)$ function $u$ must satisfy $\displaystyle \liminf_{r\rightarrow +\infty} |u(r)| = 0$. Thus we may make $r_2 \rightarrow +\infty$ in \eqref{variation of u 1} and obtain
\[
 |u(r_1)| \leq \left(\int_{r_1}^{\infty} r^{-(d-1)} dr\right)^{1/2} \left(\int_{r_1}^{\infty} r^{d-1} |u_r|^2 dr\right)^{1/2} \lesssim_d r_1^{-\frac{d-2}{2}} \|u\|_{\dot{H}^1}.
\]
This not only prove the first inequality in Lemma \ref{pointwise estimate 2} but also implies that there exists a small constant $c=c(d)$, so that the following inequalities hold for any fixed $r_0>0$.
 \begin{align*}
  \delta & \doteq \frac{c|u(r_0)|^2 r_0^{d-1}}{\|u\|_{\dot{H}^1}^2} \leq \frac{r_0}{2}; \\
  c_d^{-1/2}[(r_0-\delta)^{-(d-1)}\delta]^{1/2} \|u\|_{\dot{H}^1} & \leq c_d^{-1/2}(r_0/2)^{-\frac{d-1}{2}} \left(\frac{c|u(r_0)|^2 r_0^{d-1}}{\|u\|_{\dot{H}^1}^2}\right)^{1/2} \|u\|_{\dot{H}^1}
  \leq \frac{|u(r_0)|}{2}.
 \end{align*}
 We may use inequality \eqref{variation of u} and obtain the following estimate for all $r \in [r_0-\delta,r_0] \subseteq [r_0/2,r_0]$,
 \[
  |u(r_0)-u(r)| \leq c_d^{-1/2}[r^{-(d-1)}(r_0-r)]^{1/2} \|u\|_{\dot{H}^1}\leq c_d^{-1/2} [(r_0-\delta)^{-(d-1)}\delta]^{1/2} \|u\|_{\dot{H}^1} \leq |u(r_0)|/2.
 \]
 Thus for these $r$'s we have $|u(r)|\geq |u(r_0)|/2$. Next we use the $L^{p+1}$ norm
 \[
  r_0^{d-1} \delta |u(r_0)|^{p+1} \lesssim_d \int_{r_0-\delta}^{r_0} r^{d-1} \left(\frac{|u(r_0)|}{2}\right)^{p+1} dr \leq \int_{r_0-\delta}^{r_0} r^{d-1} |u(r)|^{p+1} dr \lesssim_d \|u\|_{L^{p+1}}^{p+1}.
 \]
 Finally we may plug the value of $\delta$ in the inequality above and obtain
 \[
  r_0^{d-1} \cdot \frac{c|u(r_0)|^2 r_0^{d-1}}{\|u\|_{\dot{H}^1}^2}\cdot |u(r_0)|^{p+1} \lesssim_d \|u\|_{L^{p+1}}^{p+1} 
  \Rightarrow |u(r_0)| \lesssim_d r_0^{-\frac{2(d-1)}{p+3}} \|u\|_{\dot{H}^1}^{\frac{2}{p+3}} \|u\|_{L^{p+1}}^{\frac{p+1}{p+3}}.
 \]
 \end{proof}

\begin{lemma} [See Lemma 2.1 of Shen \cite{shenhd}] \label{relation L2 uv nonradial}
 Let $u \in \dot{H}^{1}(\Rm^d)$ with $d \geq 3$. We define an operator
\[
 (\mathbf{L} u)(x) = r^{-\frac{d-1}{2}} \partial_r (r^{\frac{d-1}{2}} u) = \frac{x}{|x|}\cdot \nabla u(x) + \frac{d-1}{2} \cdot \frac{u(x)}{|x|}.
\]
Then we have the following identity ($\lambda\doteq (d-1)(d-3)/4$ is a constant)
\[
  \int_{\Rm^d} \left(\left|\mathbf{L} u\right|^2+ \lambda_d \cdot \frac{|u|^2}{|x|^2}\right) dx = \int_{\Rm^d} |u_r|^2 dx. 
\]
\end{lemma}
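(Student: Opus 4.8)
\emph{Strategy.} The identity is purely a matter of expanding the square defining $\mathbf{L}u$, noticing that the resulting cross term is a radial total derivative, and integrating by parts; an elementary computation with $\lambda_d$ then closes the argument. Since $u$ is only assumed to lie in $\dot{H}^1(\Rm^d)$, the plan is to first establish the identity on a convenient dense subclass — say $u\in C_c^\infty(\Rm^d\setminus\{0\})$, where no boundary terms survive — and then extend it to all of $\dot{H}^1$ by continuity.

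\emph{Step 1: algebraic reduction.} Writing $u_r=\frac{x}{|x|}\cdot\nabla u$ so that $\mathbf{L}u=u_r+\frac{d-1}{2}\,\frac{u}{|x|}$, I would expand
\[
 |\mathbf{L}u|^2 = |u_r|^2 + (d-1)\,\frac{\operatorname{Re}(u_r\bar u)}{|x|} + \frac{(d-1)^2}{4}\cdot\frac{|u|^2}{|x|^2}.
\]
Since $\lambda_d+\frac{(d-1)^2}{4}=\frac{(d-1)(d-2)}{2}$, the asserted identity is equivalent to the single relation
\[
 \int_{\Rm^d}\frac{\operatorname{Re}(u_r\bar u)}{|x|}\,dx = -\frac{d-2}{2}\int_{\Rm^d}\frac{|u|^2}{|x|^2}\,dx .
\]

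\emph{Step 2: integration by parts.} Using $2\operatorname{Re}(u_r\bar u)=\partial_r|u|^2$ and passing to polar coordinates,
\[
 \int_{\Rm^d}\frac{\operatorname{Re}(u_r\bar u)}{|x|}\,dx = \frac12\int_{\mathbb{S}^{d-1}}\!\int_0^\infty \partial_r\bigl(|u(r\omega)|^2\bigr)\,r^{d-2}\,dr\,d\omega .
\]
For $u\in C_c^\infty(\Rm^d\setminus\{0\})$ the $r$-integration by parts produces no boundary contribution, and what remains is precisely $-\frac{d-2}{2}\int_{\mathbb{S}^{d-1}}\int_0^\infty |u(r\omega)|^2 r^{d-3}\,dr\,d\omega$, i.e. the right-hand side above. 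This proves the identity on the dense subclass.

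\emph{Step 3: density and the main obstacle.} Each of the three quadratic quantities in the statement is bounded by $\|\nabla u\|_{L^2}^2$ up to a dimensional constant: trivially $\|u_r\|_{L^2}\le\|\nabla u\|_{L^2}$, while $\big\||x|^{-1}u\big\|_{L^2}\lesssim_d\|\nabla u\|_{L^2}$ by Hardy's inequality (valid for $d\ge 3$), and $\|\mathbf{L}u\|_{L^2}$ is controlled by these two. Since $C_c^\infty(\Rm^d\setminus\{0\})$ is dense in $\dot{H}^1(\Rm^d)$ for $d\ge 3$ — one approximates a $C_c^\infty(\Rm^d)$ function in $\dot H^1$ and then excises a small ball about the origin, the truncation error being controlled via Hölder by $\|u\|_{L^{2d/(d-2)}(B_\epsilon)}\to 0$ — the identity passes to the limit. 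The only genuinely delicate point is this last step: the density of $C_c^\infty(\Rm^d\setminus\{0\})$ together with the continuity of the Hardy-type term $\||x|^{-1}u\|_{L^2}$ on $\dot H^1$. Once Hardy's inequality is invoked the remainder is a routine limiting argument, and the whole computation is essentially bookkeeping with the constant $\lambda_d=(d-1)(d-3)/4$.
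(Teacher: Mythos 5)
Your proof is correct: the expansion of $|\mathbf{L}u|^2$, the constant check $\lambda_d+\tfrac{(d-1)^2}{4}=\tfrac{(d-1)(d-2)}{2}$, the integration by parts in the radial variable on $C_c^\infty(\Rm^d\setminus\{0\})$, and the extension to $\dot H^1$ via Hardy's inequality and density are all sound. The paper itself gives no proof of this lemma — it is quoted from Lemma 2.1 of \cite{shenhd} — and your argument is exactly the standard one behind that reference, so it fills the gap with essentially the expected approach.
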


\subsection{Local theory and global existence}

\paragraph{Strichartz estimates} The key tools to develop a local theory are Strichartz estimates. The following version from Ginibre-Velo's work \cite{strichartz} is almost complete except for endpoint cases. Readers may refer to Keel-Tao \cite{endpointStrichartz} for endpoint Strichartz estimates. The author would like to mention that Ginibre-Velo \cite{strichartz} gives Strichartz estimates in both Besov and Sobolev spaces. Here we choose Sobolev spaces, which is more convenient to use in our argument.

\begin{proposition}[Strichartz estimates]\label{Strichartz estimates} 
 Let $2\leq q_1,q_2 \leq \infty$, $2\leq r_1,r_2 < \infty$ and $\rho_1,\rho_2,s\in \Rm$ be constants with
 \begin{align*}
  &\frac{2}{q_i} + \frac{d-1}{r_i} \leq \frac{d-1}{2},& &(q_i,r_i)\neq \left(2,\frac{2(d-1)}{d-3}\right),& &i=1,2;& \\
  &\frac{1}{q_1} + \frac{d}{r_1} = \frac{d}{2} + \rho_1 - s;& &\frac{1}{q_2} + \frac{d}{r_2} = \frac{d-2}{2} + \rho_2 +s.&
 \end{align*}
 Assume that $u$ is the solution to the linear wave equation
\[
 \left\{\begin{array}{ll} \partial_t u - \Delta u = F(x,t), & (x,t) \in \Rm^d \times [0,T];\\
 u|_{t=0} = u_0 \in \dot{H}^s; & \\
 \partial_t u|_{t=0} = u_1 \in \dot{H}^{s-1}. &
 \end{array}\right.
\]
Then we have
\begin{align*}
 \left\|\left(u(\cdot,T), \partial_t u(\cdot,T)\right)\right\|_{\dot{H}^s \times \dot{H}^{s-1}} & +\|D_x^{\rho_1} u\|_{L^{q_1} L^{r_1}([0,T]\times \Rm^d)} \\
 & \leq C\left(\left\|(u_0,u_1)\right\|_{\dot{H}^s \times \dot{H}^{s-1}} + \left\|D_x^{-\rho_2} F(x,t) \right\|_{L^{\bar{q}_2} L^{\bar{r}_2} ([0,T]\times \Rm^d)}\right).
\end{align*}
Here the coefficients $\bar{q}_2$ and $\bar{r}_2$ satisfy $1/q_2 + 1/\bar{q}_2 = 1$, $1/r_2 + 1/\bar{r}_2 = 1$. The constant $C$ does not depend on $T$ or $u$. 
\end{proposition}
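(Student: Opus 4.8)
The plan is to pass to the half--wave propagators $U_\pm(t)\doteq e^{\pm it\sqrt{-\Delta}}$ and then invoke the abstract Strichartz machinery, following Ginibre--Velo \cite{strichartz} and Keel--Tao \cite{endpointStrichartz}. By Duhamel's formula the solution is
\[
 u(t) = \cos(t\sqrt{-\Delta})\,u_0 + \frac{\sin(t\sqrt{-\Delta})}{\sqrt{-\Delta}}\,u_1 + \int_0^t \frac{\sin((t-s)\sqrt{-\Delta})}{\sqrt{-\Delta}}\,F(s)\,ds ,
\]
and each building block is a combination of $U_\pm(t)$ dressed with powers of $D_x=\sqrt{-\Delta}$, so it suffices to treat $U(t)=e^{itD_x}$ while keeping track of fractional derivatives. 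The analytic heart of the matter is the frequency--localized dispersive estimate: with $P_k$ the Littlewood--Paley projection to $|\xi|\sim 2^k$, stationary phase applied to the kernel of $U(t)P_k$ --- whose phase $x\cdot\xi\pm t|\xi|$ has non-degenerate Hessian in the $d-1$ angular directions when $|\xi|\sim 2^k$ --- gives $\|U(t)P_kf\|_{L^\infty_x}\lesssim 2^{kd}(2^k|t|)^{-(d-1)/2}\|f\|_{L^1_x}$. Interpolating this with the $L^2\to L^2$ identity yields, for $2\le r\le\infty$, the decay bound $\|U(t)P_k\|_{L^{r'}_x\to L^r_x}\lesssim 2^{kd(1-2/r)}(2^k|t|)^{-\frac{d-1}{2}(1-2/r)}$.

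I would then feed this family of dispersive bounds into the Keel--Tao $TT^*$ theorem, whose conclusion is $\|U(t)P_kf\|_{L^q_tL^r_x}\lesssim 2^{k(\frac d2-\frac dr-\frac1q)}\|P_kf\|_{L^2_x}$ precisely for pairs with $q\ge2$, $r<\infty$ (the case $r=\infty$ handled separately by Bernstein), $\frac2q+\frac{d-1}{r}\le\frac{d-1}{2}$ and $(q,r)\ne(2,\frac{2(d-1)}{d-3})$ --- exactly the admissibility conditions in the statement, with $2^{k(\cdot)}$ the scaling weight. A square--function argument over $k$ then upgrades this to $\|U(t)f\|_{L^q_tL^r_x}\lesssim \|D_x^{\frac d2-\frac dr-\frac1q}f\|_{L^2_x}$, and re-indexing the base regularity by $s$ and the output order by $\rho_1$ gives the bound on $\|D_x^{\rho_1}u\|_{L^{q_1}L^{r_1}}$ coming from the initial data; here the first scaling identity $\frac1{q_1}+\frac d{r_1}=\frac d2+\rho_1-s$ is exactly the requirement that $D_x^{\rho_1}$ commute through so that both sides sit at homogeneity $s$. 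The energy term $\|(u,u_t)(T)\|_{\dot H^s\times\dot H^{s-1}}$ is the $(q_1,r_1)=(\infty,2)$, $\rho_1=s$ instance of the same estimates (free flow conserved, Duhamel term bounded).

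For the retarded estimate with the two possibly distinct pairs $(q_1,r_1)$ and $(q_2,r_2)$, I would dualize: the homogeneous bound for $U$ and its adjoint combine into the \emph{untruncated} inequality $\|\int_{\Rm}U(t-s)G(s)\,ds\|_{L^{q_1}_tL^{r_1}_x}\lesssim\|G\|_{L^{\bar q_2}_tL^{\bar r_2}_x}$, the second scaling identity $\frac1{q_2}+\frac d{r_2}=\frac{d-2}2+\rho_2+s$ being precisely what makes the weights $D_x^{\rho_1}$ and $D_x^{-\rho_2}$ commute past the propagators with both sides homogeneity--compatible (the shift from $\frac d2$ to $\frac{d-2}2$ absorbing the factor $D_x^{-1}$ and the $\dot H^{s-1}$ normalization of $u_1$ in Duhamel's formula). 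One then passes from $\int_{\Rm}$ to the causal integral $\int_0^t$ by the Christ--Kiselev lemma, applicable since the time exponents satisfy $\bar q_2\le q_1$ off the double endpoint.

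I expect the main obstacle --- and the reason the cited version is ``almost complete except for endpoint cases'' --- to be the diagonal endpoint $(q,r)=(2,\frac{2(d-1)}{d-3})$, genuinely present when $d\ge4$: there the $TT^*$ duality is critical, the Christ--Kiselev lemma degenerates, and the inhomogeneous estimate no longer follows formally from the homogeneous one, so this point requires the delicate real-interpolation argument of Keel--Tao \cite{endpointStrichartz} and is therefore excluded from the hypotheses. A secondary, purely bookkeeping, point of care is tracking the two \emph{independent} scaling identities --- one tied to the solution ($\rho_1$, $s$) and one to the forcing ($\rho_2$, $s$) --- which is why $D_x^{\rho_1}$ and $D_x^{-\rho_2}$ enter the statement asymmetrically.
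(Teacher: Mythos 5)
The paper does not prove this proposition at all: it is quoted verbatim as a known result from Ginibre--Velo \cite{strichartz}, with Keel--Tao \cite{endpointStrichartz} cited for the excluded endpoint, so there is no internal proof to compare against. Your sketch reconstructs the standard literature proof, and the key points are right: the frequency-localized dispersive bound with decay $(2^k|t|)^{-(d-1)/2}$, the $TT^*$/Keel--Tao machinery with scaling weight $2^{k(\frac d2-\frac dr-\frac1q)}$, the Littlewood--Paley summation, and in particular both gap conditions are verified correctly (the first is exactly $\rho_1+\frac d2-\frac d{r_1}-\frac1{q_1}=s$, and the second is the dual condition at regularity $1-s$, the shift $\frac d2\mapsto\frac{d-2}2$ coming from the $|D|^{-1}$ in Duhamel, as you say). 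One caveat: the Christ--Kiselev step needs the strict inequality $\bar q_2<q_1$, which fails not only at the excluded double endpoint but whenever $q_1=q_2=2$ with $r_i>\frac{2(d-1)}{d-3}$, a non-endpoint case permitted by the hypotheses (for $d\geq 4$). That case needs a separate argument --- e.g.\ spatial Sobolev embedding from the sharp-admissible $q=2$ estimate of Keel--Tao, or Ginibre--Velo's own duality/interpolation treatment of the retarded term, which predates and avoids Christ--Kiselev. Also note that historically Ginibre--Velo do not argue via Keel--Tao plus Christ--Kiselev at all, so your route is a modern streamlining rather than their proof; either way, for the purposes of this paper the proposition is an imported black box.
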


\paragraph{Local theory} Assume that $3\leq d \leq 6$ and $p_c(d)\leq p<p_e(d)$. We use the notation $Y(I) = L^\frac{2p}{(d-2)p-d} L^{2p}(I \times \Rm^d)$ if $I$ is a time interval.  Given a fixed time $T>0$ and initial data $(u_0,u_1)\in \dot{H}^1 \times L^2$, we may introduce a transformation $\mathbf{P}: Y([0,T])\rightarrow Y([0,T])$ by defining $\mathbf{P} u$ to be the solution $U$ of the wave equation $\partial_t^2 U -\Delta U= F(u)$ with initial data $(u_0,u_1)$. Here we use the notation $F(u)=-|u|^{p-1}u$ for convenience. By Strichartz estimates there exist constants $C,C_1$, which are solely determined by $d,p$, so that
\begin{align*}
 \|\mathbf{P} u\|_{Y([0,T])} & \leq C \left(\|(u_0,u_1)\|_{\dot{H}^1 \times L^2} + \|F(u)\|_{L^1 L^2([0,T] \times \Rm^d)}\right)\\
 &  \leq C \left(\|(u_0,u_1)\|_{\dot{H}^1 \times L^2} + T^{\frac{(d+2)-(d-2)p}{2}} \|F(u)\|_{L^\frac{2}{(d-2)p-d} L^2([0,T] \times \Rm^d)}\right)\\
 & \leq C\left(\|(u_0,u_1)\|_{\dot{H}^1 \times L^2} + T^{\frac{(d+2)-(d-2)p}{2}} \|u\|_{Y([0,T])}^p\right).
\end{align*}
and 
\begin{align*}
 \|\mathbf{P}u -\mathbf{P}\tilde{u}\|_{Y([0,T])} & \leq C\|F(u)-F(\tilde{u})\|_{L^1 L^2([0,T]\times \Rm^d)}\\
 & \leq C T^{\frac{(d+2)-(d-2)p}{2}} \|F(u)-F(\tilde{u})\|_{L^\frac{2}{(d-2)p-d} L^2([0,T] \times \Rm^d)}\\
 & \leq C_1 T^{\frac{(d+2)-(d-2)p}{2}} \|u-\tilde{u}\|_{Y([0,T])} \left(\|u\|_{Y([0,T])}^{p-1} + \|\tilde{u}\|_{Y([0,T])}^{p-1}\right).
\end{align*}
As a result, there exists a constant $C(d,p)$, so that if we choose 
\begin{equation}
 T = C(d,p) \|(u_0,u_1)\|_{\dot{H}^1\times L^2}^{\frac{-2(p-1)}{(d+2)-(d-2)p}}, \label{lower bound of lifespan}
\end{equation}
then $\mathbf{P}$ becomes a contraction map from the complete distance space 
\begin{align*}
 &X = \{u: \|u\|_{Y([0,T])}\leq 2C\|(u_0,u_1)\|_{\dot{H}^1 \times L^2}\},& &d(u,\tilde{u}) = \|u-\tilde{u}\|_{Y([0,T])}&
\end{align*}
to itself. It immediately follows that $\mathbf{P}$ has a unique fixed-point in $X$. This proves the existence and uniqueness of local solution to (CP1) with initial data in the energy space $\dot{H}^1 \times L^2$. We summarize this local theory in Lemma \ref{lemma local in energy space} below. Please see Kapitanski \cite{loc1} and Lindblad-Sogge \cite{ls}, for instance, for more results and details about the local theory.

\begin{lemma} \label{lemma local in energy space}
Assume that $3\leq d\leq 6$ and $p \in [p_c(d),p_e(d))$. Let $(u_0,u_1) \in \dot{H}^1 \times L^2(\Rm^d)$ be initial data. Then the corresponding Cauchy problem (CP1) has a unique solution $u$ in the time interval $[0,T]$ with $(u(\cdot,t),u_t(\cdot,t)) \in C([0,T];\dot{H}^1 \times L^2(\Rm^d))$ and $u \in L^{\frac{2p}{(d-2)p-d}}L^{2p} ([0,T]\times \Rm^d)$. The minimal time length of existence $T$ here can be determined solely by the $\dot{H}^1 \times L^2$ norm of initial data, as given in \eqref{lower bound of lifespan}. 
\end{lemma}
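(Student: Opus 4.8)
The plan is to obtain the solution as the unique fixed point of the Duhamel map $\mathbf{P}$ on the Strichartz space $Y([0,T])=L^{\frac{2p}{(d-2)p-d}}L^{2p}([0,T]\times\Rm^d)$, following the computation displayed just before the lemma, and to verify that every ingredient is legitimate for the stated range of $(d,p)$.

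First I would fix the exponent pair $(q_1,r_1)=\left(\frac{2p}{(d-2)p-d},\,2p\right)$, take $s=1$, $\rho_1=\rho_2=0$, and the dual forcing pair $(\bar q_2,\bar r_2)=(1,2)$, and check the hypotheses of Proposition~\ref{Strichartz estimates}. This is the only place where the restrictions on $(d,p)$ are genuinely used: the scaling identities $\frac{1}{q_1}+\frac{d}{r_1}=\frac{d-2}{2}$ and $\frac{1}{q_2}+\frac{d}{r_2}=\frac{d}{2}$ hold by direct computation; the hypothesis $p\ge p_c(d)$ makes $(d-2)p-d\ge0$, so $q_1$ is well defined, with the degenerate value $q_1=\infty$, $Y=L^\infty L^6$, occurring only at $(d,p)=(3,3)$; and the hypothesis $3\le d\le6$ together with $p<p_e(d)$ forces $q_1\ge2$ (equivalently $p\le d/(d-3)$), so that $(q_1,r_1)$ is a genuine admissible wave-Strichartz pair and, one checks, is never the forbidden endpoint. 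For $d\ge7$ and $p$ close to $p_e(d)$ one would instead have $q_1<2$, and a different space would be needed; this is the technical obstruction referred to in Remark~\ref{d36}.

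Next I would record the two nonlinear estimates already written above. Writing $\||u|^{p-1}u\|_{L^2_x}=\|u\|_{L^{2p}_x}^p$ gives $\|F(u)\|_{L^{q_1/p}_tL^2_x}=\|u\|_{Y([0,T])}^p$, and Hölder in time on $[0,T]$ upgrades $L^{q_1/p}_t$ to $L^1_t$ at the cost of a factor $T^{\,1-p/q_1}=T^{\theta}$ with $\theta=\frac{(d+2)-(d-2)p}{2}$, which is \emph{strictly positive} precisely because $p<p_e(d)$. Feeding this into the Strichartz inequality yields $\|\mathbf{P}u\|_{Y([0,T])}\le C\bigl(\|(u_0,u_1)\|_{\dot{H}^1\times L^2}+T^{\theta}\|u\|_{Y([0,T])}^p\bigr)$; using the elementary bound $|F(u)-F(\tilde u)|\lesssim(|u|^{p-1}+|\tilde u|^{p-1})|u-\tilde u|$ and Hölder in the same way gives $\|\mathbf{P}u-\mathbf{P}\tilde u\|_{Y([0,T])}\le C_1 T^{\theta}\|u-\tilde u\|_{Y([0,T])}\bigl(\|u\|_{Y([0,T])}^{p-1}+\|\tilde u\|_{Y([0,T])}^{p-1}\bigr)$.

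Finally I would close the argument. On the ball $X=\{u:\|u\|_{Y([0,T])}\le 2C\|(u_0,u_1)\|_{\dot{H}^1\times L^2}\}$ with metric $d(u,\tilde u)=\|u-\tilde u\|_{Y([0,T])}$, choosing $T$ as in \eqref{lower bound of lifespan}, i.e. small enough that $C\,T^{\theta}(2C\|(u_0,u_1)\|_{\dot{H}^1\times L^2})^{p-1}$ and $C_1 T^{\theta}(2C\|(u_0,u_1)\|_{\dot{H}^1\times L^2})^{p-1}$ are both $\le\frac{1}{2}$, makes $\mathbf{P}$ map $X$ into $X$ and act there as a contraction; since $\theta>0$, this $T$ depends only on $\|(u_0,u_1)\|_{\dot{H}^1\times L^2}$. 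Banach's theorem then produces $u\in X$. Applying the Strichartz inequality on $[0,t]$ for each $t\le T$ bounds $\|(u(\cdot,t),u_t(\cdot,t))\|_{\dot{H}^1\times L^2}$ uniformly, and continuity of $t\mapsto(u(\cdot,t),u_t(\cdot,t))$ in $\dot{H}^1\times L^2$ follows from the Duhamel formula together with strong continuity of the linear propagator and dominated convergence for the forcing integral. Unconditional uniqueness in $C([0,T];\dot{H}^1\times L^2)\cap Y([0,T])$, beyond uniqueness within $X$, follows by the standard subdivision argument: on a short enough subinterval any two such solutions lie in a common small ball and hence coincide, and one iterates across $[0,T]$. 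I expect no real difficulty beyond bookkeeping in this last part; the point that truly deserves care is the exponent verification of the second paragraph, since that is exactly where the hypotheses $3\le d\le6$ and $p_c(d)\le p<p_e(d)$ enter.
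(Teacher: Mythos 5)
Your proposal is correct and follows essentially the same route as the paper: a contraction mapping argument for the Duhamel map $\mathbf{P}$ on the same space $Y([0,T])=L^{\frac{2p}{(d-2)p-d}}L^{2p}$, using the Strichartz bound with forcing in $L^1L^2$, the Hölder-in-time gain $T^{\frac{(d+2)-(d-2)p}{2}}$, and the choice of $T$ in \eqref{lower bound of lifespan}. Your explicit verification of the admissibility of the pair $\left(\frac{2p}{(d-2)p-d},2p\right)$ for $3\leq d\leq 6$, $p_c(d)\leq p<p_e(d)$ (and of why it fails for $d\geq 7$) is a detail the paper leaves implicit, and it is checked correctly.
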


\begin{remark} \label{d36}
 If $d \geq 7$, then $\frac{2p}{(d-2)p-d}<2$ when $p$ is slightly smaller than $p_e(d) = 1+4/(d-2)$. Thus Strichartz estimates do not apply to $L^{\frac{2p}{(d-2)p-d}}L^{2p}$ norms in this case. This is a technical difficulty we encounter in very high dimensions. 
\end{remark}

\paragraph{Global existence} If $u$ is a solution to (CP1) with a finite energy, then the minimal time of existence $T$ starting from any time $t_0$ has a uniform lower bound independent to $t_0$:
\[
 T = C(d,p) \|(u(\cdot,t_0), u_t(\cdot,t_0))\|_{\dot{H}^1 \times L^2}^{\frac{-2(p-1)}{(d+2)-(d-2)p}} \geq C(d,p) (2E)^{-\frac{p-1}{(d+2)-(d-2)p}},
\]
thanks to Lemma \ref{lemma local in energy space}. Thus $u$ is defined for all $t \in \Rm^+$. The same argument works in the negative time direction as well because the wave equation is time-reversible.

\begin{proposition}[Global existence] \label{global existence finite energy}
Assume that $3\leq d\leq 6$ and $p \in [p_c(d),p_e(d))$. If $u$ is a solution to (CP1) with a finite energy, then $u$ is defined for all time $t \in \Rm$. 
\end{proposition}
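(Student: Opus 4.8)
The plan is to combine the quantitative local existence statement of Lemma~\ref{lemma local in energy space} with the energy conservation law and a standard continuation argument, exactly as indicated in the paragraph preceding the statement.

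First I would record the uniform lower bound on the lifespan. Lemma~\ref{lemma local in energy space} produces, for initial data posed at \emph{any} time $t_0$, a solution on $[t_0,t_0+T]$ whose minimal time of existence $T$ is given by \eqref{lower bound of lifespan} and therefore depends only on $\|(u(\cdot,t_0),u_t(\cdot,t_0))\|_{\dot{H}^1\times L^2}$. Since $u$ has finite energy and $\tfrac12\|(u(\cdot,t),u_t(\cdot,t))\|_{\dot{H}^1\times L^2}^2\le E(u,u_t)=E$ by energy conservation, and since the exponent $-2(p-1)/((d+2)-(d-2)p)$ in \eqref{lower bound of lifespan} is negative for $p<p_e(d)$, this forces
\[
 T \;\ge\; C(d,p)\,(2E)^{-\frac{p-1}{(d+2)-(d-2)p}} \;\doteq\; T_0 \;>\;0,
\]
a positive lower bound that is independent of $t_0$.

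Second I would run the blow-up alternative. Let $[0,T_{\max})$ be the maximal forward interval of existence of $u$. If $T_{\max}<+\infty$, pick any $t_0\in(T_{\max}-T_0,\,T_{\max})$ and apply Lemma~\ref{lemma local in energy space} at time $t_0$; this yields a solution on $[t_0,t_0+T_0]$, and $t_0+T_0>T_{\max}$, so together with the uniqueness part of the local theory it extends $u$ strictly beyond $T_{\max}$, contradicting maximality. Hence $T_{\max}=+\infty$. The negative time direction is handled identically after the change of variables $t\mapsto -t$, under which (CP1) is invariant, so $u$ is global in both directions.

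The only point that calls for any care — and it is a mild one — is to ensure that the energy conservation law is legitimately available for the solution class of Lemma~\ref{lemma local in energy space}, and not merely for smooth solutions; this is done in the usual way, by approximating $(u_0,u_1)$ in $\dot{H}^1\times L^2$ by smooth compactly supported data, propagating the conservation identity along the flow via the continuous dependence given by the contraction estimate, and passing to the limit. Once energy conservation is in hand for the relevant solutions, the remainder of the proof is a soft continuation argument requiring no further estimates.
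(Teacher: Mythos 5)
Your proposal is correct and follows essentially the same route as the paper: energy conservation bounds $\|(u(\cdot,t_0),u_t(\cdot,t_0))\|_{\dot{H}^1\times L^2}$ uniformly, so the lifespan bound \eqref{lower bound of lifespan} from Lemma \ref{lemma local in energy space} gives a $t_0$-independent positive time of existence, and iterating (your blow-up alternative is just a slightly more formal phrasing of this continuation) plus time-reversibility yields global existence in both directions.
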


\subsection{Energy Flux Formula} \label{sec: energy flux}

Let $u$ be a finite-energy solution to the wave equation $\partial_t^2 u - \Delta u = \zeta |u|^{p-1} u$ in $\Rm^d$ with $d\geq 3$ and $p \in [p_c(d),p_e(d))$. The coefficient $\zeta=0,-1$ corresponds to the homogeneous linear and defocusing wave equation, respectively. Let us use the following notation for the energy inside a given region $\Sigma \in \Rm^d$ at time $t$
\[
 E(t;\Sigma) = \int_{\Sigma} \left(\frac{1}{2}|\nabla u(x,t)|^2 + \frac{1}{2}|u_t(x,t)|^2  - \frac{\zeta}{p+1}|u(x,t)|^{p+1}\right) dx.
\]
It is well-known that the following energy flux formula holds for all $t_2>t_1\geq \eta$.
\begin{align}
 E(t_2; B(0,t_2-\eta)) & - E(t_1; B(0,t_1-\eta)) \nonumber \\
 & = \frac{1}{\sqrt{2}} \int_{\Sigma(\eta; t_1,t_2)} \left(\frac{1}{2}|\nabla u|^2 + \frac{1}{2}|u_t|^2 + u_r u_t - \frac{\zeta}{p+1}|u|^{p+1} \right) dS \label{energy flux formula}
\end{align}
Here $B(0,t_i-\eta)\doteq \{x\in \Rm^d: |x|<t_i-\eta\}$ represents the ball centred at the origin with radius $t_i-\eta$ for $i \in \{1,2\}$. The surface $\Sigma(\eta;t_1,t_2)\doteq \{(x,t)\in \Rm^d \times \Rm: t-|x|=\eta, t_1\leq t\leq t_2\}$ is a part of the forward light cone. 
\paragraph{Finite speed of energy} Since the integrand is always nonnegative, $E(t;B(0,t-\eta))$ is always an increasing function $t \in [\eta,+\infty)$, i.e. the energy can never moves faster than the light speed. As a consequence $E(t;\{x\in \Rm^d: |x|>t-\eta\})$ is a decreasing function of $t \in [\eta,+\infty)$. This immediately gives the following limit
\begin{align*}
 \lim_{R \rightarrow +\infty} & \left\{\sup_{t\geq 0} \int_{|x|>t+R} \left(\frac{1}{2}|\nabla u(x,t)|^2 + \frac{1}{2}|u_t(x,t)|^2  - \frac{\zeta}{p+1}|u(x,t)|^{p+1}\right) dx  \right\} \\
 & = \lim_{R \rightarrow +\infty} \int_{|x|>R} \left(\frac{1}{2}|\nabla u_0(x)|^2 + \frac{1}{2}|u_1(x)|^2  - \frac{\zeta}{p+1}|u_0(x)|^{p+1}\right) dx = 0.
\end{align*}
One may also consider the energy flux through backward light cones $|x|+t=s$, then prove the monotonicity of $E(t; B(0,s-t))$ and $E(t;\{x\in \Rm^d: |x|>s-t\})$ in the same manner.
\begin{proposition} \label{energy flux}
Assume that $\zeta \in \{0,-1\}$. Let $u$ be a solution to the wave equation $\partial_t^2 u - \Delta u = \zeta |u|^{p-1} u$ with a finite energy. Then given any $\eta \in \Rm$, $E(t; B(0,t-\eta))$ is an increasing function of $t \in [\eta,+\infty)$; $E(t; \{x\in \Rm^d: |x|>t-\eta\})$ is a decreasing function of $t \in [\eta,+\infty)$. Similarly given any $s\in \Rm$, $E(t; B(0,s-t))$ is a decreasing function of $t \in (-\infty,s]$; $E(t; \{x\in \Rm^d: |x|>s-t\})$ is an increasing function of $t\in (-\infty,s]$. We also have the following limit
 \[
  \lim_{R \rightarrow +\infty} \left\{\sup_{t\geq 0} \int_{|x|>t+R} \left(\frac{1}{2}|\nabla u(x,t)|^2 + \frac{1}{2}|u_t(x,t)|^2  - \frac{\zeta}{p+1}|u(x,t)|^{p+1}\right) dx  \right\} = 0.
 \]
\end{proposition}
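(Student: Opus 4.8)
\medskip
\noindent\textbf{Proof proposal.} The plan is to read off every assertion from the energy flux formula \eqref{energy flux formula}, after observing that its right-hand side is always nonnegative. First I would verify the pointwise nonnegativity of the surface integrand. Splitting the spatial gradient into its radial part and its part tangent to the sphere, $\nabla u = u_r\,\tfrac{x}{|x|} + \nabla^{\perp} u$ with $\nabla^{\perp} u\cdot\tfrac{x}{|x|}=0$, one has $|\nabla u|^2 = |u_r|^2 + |\nabla^{\perp} u|^2$, hence
\[
 \frac12|\nabla u|^2 + \frac12|u_t|^2 + u_r u_t = \frac12\bigl(u_r+u_t\bigr)^2 + \frac12\bigl|\nabla^{\perp} u\bigr|^2 \ge 0;
\]
moreover $-\tfrac{\zeta}{p+1}|u|^{p+1}\ge 0$ because $\zeta\in\{0,-1\}$. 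So the right-hand side of \eqref{energy flux formula} is nonnegative, giving $E(t_2;B(0,t_2-\eta))\ge E(t_1;B(0,t_1-\eta))$ for all $t_2>t_1\ge\eta$; that is, $t\mapsto E(t;B(0,t-\eta))$ is nondecreasing on $[\eta,+\infty)$.

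For the exterior region I would invoke conservation of the total energy $E(t;\Rm^d)\equiv E$ and write $E(t;\{x:|x|>t-\eta\}) = E - E(t;B(0,t-\eta))$, which is then nonincreasing on $[\eta,+\infty)$. For the two backward-cone statements I would apply the facts just established to the time-reversed solution $v(x,\tau)\doteq u(x,-\tau)$, which solves the same equation and has the same finite energy: a forward light cone $\{\tau-|x|=-s\}$ for $v$ is the backward cone $\{t+|x|=s\}$ for $u$, and $E_v(\tau;B(0,\tau+s)) = E_u(t;B(0,s-t))$ with $t=-\tau$, so the monotonicity in $\tau\in[-s,+\infty)$ of the left-hand side becomes the claimed monotonicity of $E(t;B(0,s-t))$ in $t\in(-\infty,s]$, and subtracting from $E$ gives the exterior version.

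Finally, for the limit I would take $\eta=-R$ with $R>0$, so that $\{x:|x|>t+R\}=\{x:|x|>t-\eta\}$; since $t\mapsto E(t;\{x:|x|>t+R\})$ is nonincreasing on $[-R,+\infty)\supseteq[0,+\infty)$, its supremum over $t\ge0$ equals its value at $t=0$, which is $\int_{|x|>R}\bigl(\tfrac12|\nabla u_0|^2+\tfrac12|u_1|^2-\tfrac{\zeta}{p+1}|u_0|^{p+1}\bigr)\,dx$. This tends to $0$ as $R\to+\infty$ because the integrand is nonnegative (here $\zeta\le0$ is used) and its integral over $\Rm^d$ is the finite number $E$.

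I do not expect a genuine obstacle in the above: granting \eqref{energy flux formula} and energy conservation (both recalled in the excerpt), everything reduces to completing the square and an elementary monotonicity argument. The one point that does need care is the justification of the energy flux formula itself for merely finite-energy solutions in $\dot H^1\times L^2$, which one obtains by a standard approximation of the data by smooth functions together with the well-posedness and Strichartz bounds of Lemma \ref{lemma local in energy space}; since the excerpt already states \eqref{energy flux formula} as known, I would simply cite it.
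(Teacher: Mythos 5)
Your proposal is correct and follows essentially the same route as the paper: nonnegativity of the flux integrand in \eqref{energy flux formula} (which you make explicit by completing the square into $\frac12(u_r+u_t)^2+\frac12|\nabla^{\perp}u|^2$ plus the sign of $-\zeta$), total energy conservation to pass to the exterior regions, and the observation that the decreasing exterior energy attains its supremum at $t=0$, where it is a tail of the finite energy integral, for the limit. The only cosmetic difference is that you treat the backward cones by time reversal, while the paper redoes the flux computation through the cones $|x|+t=s$; the two are equivalent.
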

\paragraph{Surface integral estimates} Next we observe that the left hand of \eqref{energy flux formula} is smaller or equal to the energy $E$, let $t_1=\eta$, $t_2\rightarrow +\infty$ and obtain an inequality
\[
 \frac{1}{\sqrt{2}} \int_{t-|x|=\eta} \left(\frac{1}{2}|\nabla u|^2 + \frac{1}{2}|u_t|^2 + u_r u_t - \frac{\zeta}{p+1}|u|^{p+1} \right) dS \leq E.
\] 
If we define $\bar{u} (x)= u(x, |x|+\eta)$, then we may apply Hardy's inequality and obtain
\begin{align*}
 \frac{1}{\sqrt{2}} \int_{t-|x|=\eta} \frac{|u(x,t)|^2}{|x|^2} dS & = \int_{\Rm^d} \frac{|\bar{u}(x)|^2}{|x|^2} dx \lesssim_d \frac{1}{2}\int_{\Rm^d} |\nabla \bar{u}(x)|^2 dx \\
 & = \frac{1}{\sqrt{2}} \int_{t-|x|=\eta} \left(\frac{1}{2}|\nabla u|^2 + \frac{1}{2}|u_t|^2 + u_r u_t\right) dS \leq E.
\end{align*}
In summary we have
\begin{proposition}[Boundedness of energy flux] \label{energy flux integral}
 Assume $d \geq 3$ and $\zeta \in \{-1,0\}$. Let $u$ be a solution to the wave equation $\partial_t^2 u - \Delta u = \zeta |u|^{p-1} u$ with a finite energy $E$. Then we have the following uniform upper bounds on the surface integrals over light cones
 \begin{align*}
  \frac{1}{\sqrt{2}} \int_{t-|x|=\eta} \left(\frac{1}{2}|\nabla u|^2 + \frac{1}{2}|u_t|^2 + u_r u_t - \frac{\zeta}{p+1}|u|^{p+1} \right) dS & \leq E;\\
  \frac{1}{\sqrt{2}} \int_{t-|x|=\eta} \frac{|u(x,t)|^2}{|x|^2} dS & \lesssim_d E.
 \end{align*}
\end{proposition}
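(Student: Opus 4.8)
The plan is to obtain the first inequality as an immediate consequence of the energy flux formula \eqref{energy flux formula}, and then to deduce the weighted $L^2$ bound from it by a change of variables on the light cone followed by Hardy's inequality, exactly along the lines of the discussion preceding this proposition.

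\emph{First inequality.} In \eqref{energy flux formula} I would set $t_1=\eta$, so that $B(0,t_1-\eta)=B(0,0)$ is empty and $E(\eta;B(0,0))=0$; the formula then identifies the surface integral over $\Sigma(\eta;\eta,t_2)$ with $E(t_2;B(0,t_2-\eta))$. Since $\zeta\in\{-1,0\}$ and $p\ge 1$, the bulk energy density $\frac12|\nabla u|^2+\frac12|u_t|^2-\frac{\zeta}{p+1}|u|^{p+1}$ is pointwise nonnegative, hence $E(t_2;B(0,t_2-\eta))$ is bounded by the conserved total energy $E$. Moreover the integrand on the cone is itself nonnegative, because $\frac12|\nabla u|^2+\frac12|u_t|^2+u_r u_t\ge\frac12(u_r+u_t)^2\ge0$ (using $|\nabla u|^2\ge u_r^2$) and $-\frac{\zeta}{p+1}|u|^{p+1}\ge0$. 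Letting $t_2\to+\infty$ and applying monotone convergence over the nested truncated cones $\Sigma(\eta;\eta,t_2)$ yields the first bound, uniformly in $\eta$ since $E$ does not depend on $\eta$.

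\emph{Second inequality.} I would parametrize $\{(x,t):t-|x|=\eta\}$ by $x\in\Rm^d\setminus\{0\}$ via $t=|x|+\eta$; since the graphing function $x\mapsto|x|+\eta$ has gradient of unit length, the induced surface measure satisfies $dS=\sqrt2\,dx$. Putting $\bar u(x)=u(x,|x|+\eta)$, the chain rule gives $\nabla_x\bar u(x)=(\nabla u)(x,|x|+\eta)+u_t(x,|x|+\eta)\frac{x}{|x|}$, whence $|\nabla\bar u|^2=|\nabla u|^2+2u_r u_t+|u_t|^2$ on the cone (with $u_r=\frac{x}{|x|}\cdot\nabla u$). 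Therefore
\[
 \tfrac12\int_{\Rm^d}|\nabla\bar u(x)|^2\,dx=\tfrac{1}{\sqrt2}\int_{t-|x|=\eta}\left(\tfrac12|\nabla u|^2+\tfrac12|u_t|^2+u_r u_t\right)dS\le E,
\]
the last step being the first inequality when $\zeta=0$, and following from the first inequality together with $\frac{1}{p+1}|u|^{p+1}\ge0$ when $\zeta=-1$. In particular $\bar u\in\dot{H}^1(\Rm^d)$, so Hardy's inequality (valid for $d\ge3$) gives $\frac{1}{\sqrt2}\int_{t-|x|=\eta}|u|^2/|x|^2\,dS=\int_{\Rm^d}|\bar u(x)|^2/|x|^2\,dx\lesssim_d\int_{\Rm^d}|\nabla\bar u|^2\,dx\lesssim_d E$.

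The analytic content here is entirely in \eqref{energy flux formula} and Hardy's inequality; the only point requiring care is that a general finite-energy solution need not be smooth enough to justify the pointwise chain rule and the flux identity directly. I would handle this in the standard way: establish the estimates first for smooth solutions, for which all of the above is classical, and then pass to the limit using the local well-posedness of Lemma \ref{lemma local in energy space} and continuous dependence on the data, invoking Fatou's lemma to preserve the inequalities in the limit. This approximation bookkeeping is the main — and rather mild — obstacle.
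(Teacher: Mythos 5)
Your proposal is correct and follows essentially the same route as the paper: take $t_1=\eta$, $t_2\to+\infty$ in the energy flux formula \eqref{energy flux formula} to get the first bound, then set $\bar u(x)=u(x,|x|+\eta)$, identify the cone integrals with $\int_{\Rm^d}\frac{|\bar u|^2}{|x|^2}\,dx$ and $\frac12\int_{\Rm^d}|\nabla\bar u|^2\,dx$, and conclude by Hardy's inequality. Your extra details (nonnegativity of the cone integrand, $dS=\sqrt2\,dx$, the chain rule identity $|\nabla\bar u|^2=|\nabla u|^2+2u_ru_t+|u_t|^2$, and the smooth-approximation remark) only make explicit what the paper leaves implicit.
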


\subsection{Morawetz estimates}

The following Morawetz estimate was given by Perthame and Vega in their work \cite{benoit}. It provides valuable information about the energy distribution of solutions to defocusing wave equation. A slightly stronger version of Morawetz estimates can be found in the author's recent work \cite{shenhd}. We assume $d\geq 3$ and $p\in [p_c(d),p_e(d)]$ in this subsection.

\begin{proposition}[Morawetz estimates] Let $u$ be a solution to (CP1) defined in a time interval $[0,T]$ with a finite energy $E$. Then we have the following inequality for any $R>0$.  
\begin{align}
 & \frac{1}{2R}\int_{0}^T \!\int_{|x|<R}\left(|\nabla u|^2+|u_t|^2+\frac{(d\!-\!1)(p\!-\!1)\!-\!2}{p+1}|u|^{p+1}\right) dx dt \nonumber\\
 &\quad  + \frac{d-1}{4R^2} \int_{0}^T \!\int_{|x|=R} |u|^2 d\sigma_R(x) dt + \frac{(d-1)(p-1)}{2(p+1)} \int_{0}^T\! \int_{|x|>R} \frac{|u|^{p+1}}{|x|} dx dt  \leq 2E. \label{morawetz1}
\end{align}
\end{proposition}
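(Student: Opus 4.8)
The plan is to run a Morawetz multiplier argument with a radial weight truncated at radius $R$, using the operator $\mathbf{L}$ of Lemma \ref{relation L2 uv nonradial} to bound the resulting time-boundary terms by exactly $2E$. Fix $R>0$ and let $h=h_R$ be the radial $C^{1,1}$ weight determined by $h'(r)=\min\{r/R,1\}$, i.e.
\[
 h(r)=\frac{r^2}{2R}\ \ (0\le r\le R),\qquad h(r)=r-\frac{R}{2}\ \ (r\ge R),\qquad h''(r)=\frac1R\mathbf{1}_{[0,R)}(r).
\]
Put $\beta(r)=\frac{d-1}{2}\cdot\frac{h'(r)}{r}$ and introduce the multiplier
\[
 \mathcal{M}u=h'(r)\,\partial_r u+\beta(r)\,u=h'(r)\,(\mathbf{L}u)(x).
\]
I would multiply $\partial_t^2 u-\Delta u+|u|^{p-1}u=0$ by $\mathcal{M}u$ and integrate by parts over $[0,T]\times\Rm^d$. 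The integrations by parts are first carried out for smooth, spatially decaying solutions and then extended by the limiting argument built into Lemma \ref{lemma local in energy space}, the flux through spatial infinity vanishing because of the finite-energy hypothesis (cf. Proposition \ref{energy flux}).

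The heart of the matter is the pointwise virial identity. With $F(u)=\frac{1}{p+1}|u|^{p+1}$ and $|\nabla u|^2=|\partial_r u|^2+r^{-2}|\nabla_\omega u|^2$, the $u_{tt}$, $-\Delta u$ and nonlinear contributions combine --- the choice $\beta=\frac{d-1}{2r}h'$ is precisely the one that cancels the $u_t^2$ term and the spurious $|\nabla u|^2$ term arising from the $\beta u$ part of $\mathcal{M}u$ --- into
\[
 \partial_t\big(u_t\,\mathcal{M}u\big)=-\tfrac12 h'' u_t^2-\tfrac12 h''|\partial_r u|^2+\Big(\tfrac12 h''-\tfrac{h'}{r}\Big)\tfrac{|\nabla_\omega u|^2}{r^2}+\tfrac12(\Delta\beta)u^2+\big(\Delta h-(p+1)\beta\big)F(u)+\operatorname{div}_x(\cdots).
\]
Integrating over $\Rm^d\times[0,T]$ kills the divergence term and leaves $\big[\int_{\Rm^d}u_t\,\mathcal{M}u\,dx\big]_0^T$ on the left. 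Next I would evaluate the coefficients region by region. For $|x|<R$: $h''=1/R$, $h'/r=1/R$, $\Delta h=d/R$, $\beta=\frac{d-1}{2R}$, so the first three terms add up to $-\frac1{2R}(u_t^2+|\nabla u|^2)$, one has $\Delta\beta\equiv0$, and $(\Delta h-(p+1)\beta)F(u)=-\frac{(d-1)(p-1)-2}{2R(p+1)}|u|^{p+1}$. For $|x|>R$: $h''=0$, $h'=1$, $\Delta h=\frac{d-1}{r}$, $\beta=\frac{d-1}{2r}$, so the gradient term is $-\frac{|\nabla_\omega u|^2}{r^3}\le0$, the smooth part of $\frac12(\Delta\beta)u^2$ equals $-\frac{\lambda_d}{r^3}u^2\le0$ (recall $\lambda_d\ge0$), and $(\Delta h-(p+1)\beta)F(u)=-\frac{(d-1)(p-1)}{2r(p+1)}|u|^{p+1}$. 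Across $|x|=R$ the function $\beta'$ jumps from $0$ to $-\frac{d-1}{2R^2}$, so the distribution $\frac12\Delta\beta$ carries the surface density $-\frac{d-1}{4R^2}\,\delta_{\{|x|=R\}}$. Moving every term of favourable sign to the left and discarding the two nonnegative contributions supported in $\{|x|>R\}$ shows that the left side of \eqref{morawetz1} is at most $-\big[\int_{\Rm^d}u_t\,\mathcal{M}u\,dx\big]_0^T$. Finally, at any fixed time $|h'|\le1$ together with Lemma \ref{relation L2 uv nonradial} (giving $\|\mathbf{L}u\|_{L^2}^2\le\|\partial_r u\|_{L^2}^2\le\|\nabla u\|_{L^2}^2$) yields $\big|\int_{\Rm^d}u_t\,\mathcal{M}u\,dx\big|\le\frac12\|u_t\|_{L^2}^2+\frac12\|\mathbf{L}u\|_{L^2}^2\le E$, so $-[\,\cdot\,]_0^T\le2E$, completing the estimate.

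The step I expect to be the main obstacle is the rigorous treatment of the truncation radius $|x|=R$: one must either regularise $h$ by smooth convex weights and pass to the limit, or apply the divergence identity on $\{|x|<R\}$ and $\{|x|>R\}$ separately and reconcile the two boundary fluxes, the mismatch --- caused exactly by the jump of $\beta'$ --- being what produces the term $\frac{d-1}{4R^2}\int_0^T\!\int_{|x|=R}|u|^2$. Attached to this is the delicate but routine sign bookkeeping that makes the gradient terms collapse to exactly $-\frac1{2R}|\nabla u|^2$ inside the cone and produces the precise constant $(d-1)(p-1)-2$ in front of $|u|^{p+1}$ there. The remaining ingredients --- enough regularity to legitimise the integrations by parts and the decay needed to drop the flux at spatial infinity --- are standard given the finite-energy hypothesis, Lemma \ref{lemma local in energy space} and Proposition \ref{energy flux}.
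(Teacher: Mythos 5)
Your proposal is correct, and I verified the bookkeeping: with $h'=\min\{r/R,1\}$ and $\beta=\frac{d-1}{2}\frac{h'}{r}$ the virial identity you display is the right one, the interior coefficients collapse to $-\frac{1}{2R}\bigl(|u_t|^2+|\nabla u|^2\bigr)$ and $-\frac{(d-1)(p-1)-2}{2R(p+1)}|u|^{p+1}$, the exterior terms give $-\frac{(d-1)(p-1)}{2r(p+1)}|u|^{p+1}$ plus the two discarded nonpositive terms, the jump of $\beta'$ at $|x|=R$ produces exactly the surface density $-\frac{d-1}{4R^2}$, and $|h'|\le 1$ together with Lemma \ref{relation L2 uv nonradial} gives $\bigl|\int u_t\,h'\mathbf{L}u\,dx\bigr|\le E$, hence the constant $2E$ in \eqref{morawetz1}. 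The route is, however, different from the paper's in the simplest possible sense: the paper offers no proof, quoting the estimate from Perthame--Vega \cite{benoit} and using the following remark only to translate their normalization (nonlinearity written as $-|u|^pu$, energy twice the one used here) into the present constants. Your truncated-multiplier derivation is therefore a self-contained alternative; what it buys is an exact accounting of where each term of \eqref{morawetz1} originates (interior terms from $h''=1/R$, boundary term from the kink of $\beta$, exterior potential term from $\Delta h-(p+1)\beta$) and a bound on the time-boundary terms phrased through the same operator $\mathbf{L}$ the paper already uses, at the cost of the routine regularization issues you correctly flag (smoothing of $h$ or matching fluxes across $|x|=R$, density of smooth data, vanishing flux at spatial infinity). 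One small wording point: your parenthetical that the choice of $\beta$ ``cancels the $u_t^2$ term'' is loose --- it leaves $-\frac12 h''u_t^2$, which is precisely what generates the interior $|u_t|^2$ contribution --- but the displayed identity already states this correctly, so nothing in the argument is affected.
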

\begin{remark}
 Perthame and Vega write the nonlinear term of wave equation as $-|u|^p u$. In addition, the energy they define is twice as much as ours. Thus the notations $p$ and $E$ represent slight different constants in their works. This explains why the coefficients of the Morawetz inequality in their work look different from ours. We also ignore two other nonnegative terms in the left hand side that are irrelevant to our argument in this work. 
\end{remark}

\paragraph{Energy distribution} We have already known that all finite-energy solutions to (CP1) are globally defined in time. Thus we may substitute the upper limit of integrals in inequality \eqref{morawetz1} by $+\infty$. By energy conservation law we may also substitute the lower limit by $-\infty$. 
\begin{align}
 & \frac{1}{2R}\int_{-\infty}^\infty \!\int_{|x|<R}\left(|\nabla u|^2+|u_t|^2+\frac{(d\!-\!1)(p\!-\!1)\!-\!2}{p+1}|u|^{p+1}\right) dx dt \nonumber\\
 &\quad  + \frac{d-1}{4R^2} \int_{-\infty}^\infty \!\int_{|x|=R} |u|^2 d\sigma_R(x) dt + \frac{(d-1)(p-1)}{2(p+1)} \int_{-\infty}^\infty \! \int_{|x|>R} \frac{|u|^{p+1}}{|x|} dx dt  \leq 2E. \label{morawetz2}
\end{align}
\noindent Because we assume $p \geq 1+4/(d-1)$, we have $\frac{(d-1)(p-1)-2}{p+1} \geq \frac{2}{p+1}$. As a result we have
\begin{align*}
  \frac{1}{2R}\int_{-\infty}^\infty \!\int_{|x|<R}\left(|\nabla u|^2+|u_t|^2+\frac{2}{p+1}|u|^{p+1}\right) dx dt \leq 2E.
\end{align*}
Thus
\begin{align*}
 \int_{-\infty}^\infty \!\int_{|x|<R} &\left(\frac{|\nabla u|^2}{2}+\frac{|u_t|^2}{2}+\frac{|u|^{p+1}}{p+1}\right) dx dt\\
 & \leq 2RE = \int_{-R}^R \int_{\Rm^d} \left(\frac{|\nabla u|^2}{2}+\frac{|u_t|^2}{2}+\frac{|u|^{p+1}}{p+1}\right) dx dt.
\end{align*}
We may subtract $\int_{-R}^R \int_{|x|<R} \left(\frac{|\nabla u|^2}{2}+\frac{|u_t|^2}{2}+\frac{|u|^{p+1}}{p+1}\right) dx dt$ from both sides and obtain
\begin{corollary} \label{energy distribution}
 Let $u$ be a finite-energy solution to (CP1). Then we have the inequality
 \begin{align*}
 \int_{|t|>R} \!\int_{|x|<R} &\left(\frac{|\nabla u|^2}{2}+\frac{|u_t|^2}{2}+\frac{|u|^{p+1}}{p+1}\right) dx dt
 \leq \int_{-R}^R \int_{|x|>R} \left(\frac{|\nabla u|^2}{2}+\frac{|u_t|^2}{2}+\frac{|u|^{p+1}}{p+1}\right) dx dt.
\end{align*}
\end{corollary}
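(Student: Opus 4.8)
The plan is to deduce the inequality directly from the Morawetz estimate \eqref{morawetz1}, first upgrading it to the whole time axis and then splitting the resulting space--time integrals. Since $u$ has finite energy, Proposition \ref{global existence finite energy} guarantees that $u$ is globally defined, so we may apply \eqref{morawetz1} on $[0,T]$ and let $T\to+\infty$ by monotone convergence; because the wave equation is time reversible and $E$ is conserved, the same bound holds on $(-\infty,0]$, hence on all of $\Rm$. This produces \eqref{morawetz2}.

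Next I would exploit the superconformal hypothesis $p\ge 1+4/(d-1)$, which is equivalent to $(d-1)(p-1)-2\ge 2$ and hence to $\frac{(d-1)(p-1)-2}{p+1}\ge\frac{2}{p+1}$. Every term on the left-hand side of \eqref{morawetz2} is nonnegative, so I may discard the sphere integral over $\{|x|=R\}$ and the exterior integral of $|u|^{p+1}/|x|$, and replace the coefficient of $|u|^{p+1}$ in the interior term by the smaller number $\frac{2}{p+1}$. Multiplying through by $R$ yields
\[
 \int_{-\infty}^{\infty}\!\int_{|x|<R}\left(\frac{|\nabla u|^2}{2}+\frac{|u_t|^2}{2}+\frac{|u|^{p+1}}{p+1}\right)dx\,dt\le 2RE .
\]

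Finally, by energy conservation one has $2RE=\int_{-R}^{R}\int_{\Rm^d}\bigl(\tfrac{|\nabla u|^2}{2}+\tfrac{|u_t|^2}{2}+\tfrac{|u|^{p+1}}{p+1}\bigr)dx\,dt$. I would split the right-hand side over the regions $\{|x|<R\}$ and $\{|x|>R\}$, and split the left-hand side of the displayed inequality over $\{|t|<R\}$ and $\{|t|>R\}$; the common piece $\int_{-R}^{R}\int_{|x|<R}(\cdots)\,dx\,dt$ appears on both sides, and cancelling it leaves exactly the claim. There is no deep obstacle here: the only points needing care are the passage to infinite time (which is why global existence is invoked) and verifying that all the discarded terms are genuinely nonnegative, so that the inequality survives each reduction; the sign of the $|u|^{p+1}$ coefficient is precisely where the assumption $p\ge p_c(d)$ is used.
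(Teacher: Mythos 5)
Your argument is correct and follows essentially the same route as the paper: extend the Morawetz estimate \eqref{morawetz1} to all of $\Rm$ using global existence and energy conservation, use $p\ge p_c(d)$ to lower the $|u|^{p+1}$ coefficient to $\frac{2}{p+1}$, discard the remaining nonnegative terms, identify $2RE$ with the space-time energy integral over $[-R,R]\times\Rm^d$, and cancel the common piece $\int_{-R}^{R}\int_{|x|<R}$ (which is finite, so the subtraction is legitimate). No gaps.
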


\paragraph{lower limit of $\|u\|_{L^{p+1}}$} We let $R \rightarrow 0^+$ in the Morawetz inequality \eqref{morawetz2} and obtain
\[
  \int_{-\infty}^\infty \int_{\Rm^d} \frac{|u|^{p+1}}{|x|} dx dt \lesssim_{d,p} E.
\]
This is the most widely used form of Morawetz estimates. It immediately follows that 
\begin{corollary} \label{limit of potential}
 If $u$ is a finite-energy solution to (CP1), then $\displaystyle \liminf_{t\rightarrow +\infty} \int_{\Rm^d} |u(x,t)|^{p+1} dx = 0$.
\end{corollary}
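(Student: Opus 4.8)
The plan is to upgrade the space--time bound $\int_{-\infty}^{\infty}\int_{\Rm^d}\frac{|u|^{p+1}}{|x|}\,dx\,dt \lesssim_{d,p} E$, obtained by letting $R\to 0^+$ in \eqref{morawetz2}, into the claimed pointwise-in-time statement. The obstruction is that the weight $1/|x|$ loses all its strength far from the origin, so a direct use of this bound controls the potential energy only inside bounded balls; the remedy is to control the potential energy far away by the finite speed of propagation of energy (Proposition \ref{energy flux}), which confines it to the region $\{|x|<t+R\}$ up to an error that is uniformly small once $R$ is large.

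First I would set $f(t)\doteq\int_{\Rm^d}\frac{|u(x,t)|^{p+1}}{|x|}\,dx$ and note that $f\in L^1(\Rm)$. For a nonnegative $L^1$ function one always has $\liminf_{t\to+\infty}t\,f(t)=0$, since $t\,f(t)\ge\delta$ for all large $t$ would force $f(t)\ge\delta/t$ and hence $\int_{T}^{\infty}f(t)\,dt=+\infty$. So I can fix a sequence $t_n\to+\infty$ with $t_n f(t_n)\to0$; because $t_n\to+\infty$ this also forces $f(t_n)\to0$. For any fixed $R>0$, discarding the weight in our favour on the ball $\{|x|<t_n+R\}$ gives
\[
 \int_{|x|<t_n+R}|u(x,t_n)|^{p+1}\,dx\le(t_n+R)\int_{\Rm^d}\frac{|u(x,t_n)|^{p+1}}{|x|}\,dx=(t_n+R)f(t_n)\xrightarrow[n\to\infty]{}0,
\]
because $(t_n+R)f(t_n)=\bigl(t_nf(t_n)\bigr)\bigl(1+R/t_n\bigr)\to0$.

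For the part outside this ball I would use the last conclusion of Proposition \ref{energy flux} with $\zeta=-1$: for every $\eps>0$ there is $R=R(\eps)$ with $\sup_{t\ge0}\int_{|x|>t+R}|u(x,t)|^{p+1}\,dx<\eps$. Combining the two estimates, for each $\eps>0$ pick $R=R(\eps)$ and get
\[
 \limsup_{n\to\infty}\int_{\Rm^d}|u(x,t_n)|^{p+1}\,dx\le\lim_{n\to\infty}\int_{|x|<t_n+R}|u(x,t_n)|^{p+1}\,dx+\eps=\eps .
\]
Letting $\eps\to0$ gives $\int_{\Rm^d}|u(x,t_n)|^{p+1}\,dx\to0$, hence $\liminf_{t\to+\infty}\int_{\Rm^d}|u(x,t)|^{p+1}\,dx=0$.

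The one genuinely non-routine point is the extra factor of $t$: it is tempting to believe that $\liminf_{t\to\infty}f(t)=0$ already settles the matter, but stripping the $1/|x|$ weight off an annulus whose outer radius grows like $t$ costs a factor $t+R$, so one really needs the sharper $\liminf_{t\to\infty}tf(t)=0$ together with the finite-propagation-speed control of the tail. Everything else reduces to the one-line estimates above.
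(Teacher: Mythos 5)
Your proof is correct and follows essentially the same route as the paper: both extract the vanishing of the potential energy inside the region $\{|x|<t+R\}$ from the Morawetz bound $\int\int |u|^{p+1}/|x|\,dx\,dt\lesssim_{d,p}E$ (using that $1/|x|\geq 1/(t+R)$ there) and then control the exterior $\{|x|>t+R\}$ uniformly in $t$ by Proposition \ref{energy flux}. The only cosmetic difference is that you phrase the first step as $\liminf_{t\to\infty}t f(t)=0$ for the weighted integrand, which yields a single sequence $t_n$ valid for every $R$, whereas the paper takes the $\liminf$ of the truncated integral for each fixed $R$; the two are interchangeable here.
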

\begin{proof}
 Given any $R>0$, we have 
 \[
  \int_{0}^\infty \left(\frac{1}{t+R} \int_{|x|<t+R} |u(x,t)|^{p+1} dx\right) dt \leq \int_{0}^\infty \int_{\Rm^d} \frac{|u(x,t)|^{p+1}}{|x|} dx dt \lesssim_{d,p} E.
 \]
 This implies that $\displaystyle \liminf_{t\rightarrow +\infty} \int_{|x|<t+R} |u(x,t)|^{p+1} dx = 0$. We then combine this lower limit with Proposition \ref{energy flux} to finish the proof.
 \end{proof}

\subsection{Asymptotic behaviour of free waves}
Before we conclude this section, we give a lemma describing the asymptotic behaviour of free waves. 
\begin{lemma} \label{asymptotic linear}
 Assume $d\geq 3$. Let $u$ be a solution to the free wave equation $\partial_t^2 u - \Delta u =0$ with initial data $\dot{H}^1\times L^2(\Rm^d)$. Then we have the limits
 \begin{align*}
  &\lim_{t \rightarrow +\infty} \int_{\Rm^d} \frac{|u(x,t)|^2}{|x|^2} dx = 0,& \\
  &\lim_{\eta \rightarrow +\infty} \sup_{t>\eta} \left\{\int_{|x|<t-\eta} \left(|\nabla u(x,t)|^2 + |u_t(x,t)|^2\right) dx\right\} = 0.&
 \end{align*}
\end{lemma}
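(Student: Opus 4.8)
The plan is to prove the two limits by a density argument, reducing to nice data where the conclusions follow from explicit decay estimates, and then controlling the error by the $\dot H^1\times L^2$ norm uniformly in $t$ (resp.\ $\eta$). For the first limit, the key point is that the quantity $\int_{\Rm^d} |u(x,t)|^2/|x|^2\,dx$ is controlled by $\|(u(\cdot,t),u_t(\cdot,t))\|_{\dot H^1\times L^2}^2$ via Hardy's inequality, and this norm is conserved for a free wave. So first I would fix $\eps>0$ and approximate $(u_0,u_1)$ in $\dot H^1\times L^2$ by smooth, compactly supported radial data $(v_0,v_1)$; by Hardy's inequality applied to the difference, $\int |u(x,t)-v(x,t)|^2/|x|^2\,dx\lesssim_d \|(u_0-v_0,u_1-v_1)\|_{\dot H^1\times L^2}^2<\eps$ for all $t$, where $v=\mathbf{S}_L(v_0,v_1)$. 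It then suffices to prove $\int |v(x,t)|^2/|x|^2\,dx\to 0$. For compactly supported smooth data, strong Huygens-type considerations (or simply the pointwise decay $|v(x,t)|\lesssim (1+|t|+|x|)^{-(d-1)/2}$ for $d$ odd, and the slightly weaker but still sufficient bound for $d$ even, together with finite propagation speed confining the support to an annulus $||x|-t|\lesssim 1$) give $\int |v(x,t)|^2/|x|^2\,dx\to 0$ as $t\to+\infty$; alternatively one can use the pointwise estimate of Lemma \ref{pointwise estimate 2} for $v(\cdot,t)$ combined with the fact that for large $t$ the energy of $v$ escapes to large radii, so $|v(r,t)|^2/r^2$ is integrated over a region where $r$ is large. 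Either route closes the first limit.

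For the second limit, the strategy is the same in spirit: the quantity $\sup_{t>\eta}\int_{|x|<t-\eta}(|\nabla u|^2+|u_t|^2)\,dx$ is exactly the supremum over $t$ of the energy of the free wave inside the backward-shifted light cone, and by the energy flux formula (Proposition \ref{energy flux} with $\zeta=0$), for each fixed $\eta$ this is a decreasing function... wait — more precisely, $E(t;B(0,t-\eta))$ is increasing in $t$, so the supremum is the limit as $t\to+\infty$, which is the total energy $\tilde E$; that is not what we want. The correct observation is that we want the limit as $\eta\to+\infty$ of these suprema. So I would instead argue: by density, approximate $(u_0,u_1)$ by compactly supported $(v_0,v_1)$ with support in $|x|<\rho$; the error in energy is $<\eps$ uniformly in $t$ by energy conservation for the difference. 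For the compactly supported piece $v$, finite speed of propagation forces $v(\cdot,t)$ to be supported in $||x|-t|<\rho$ (more precisely in $t-\rho<|x|<t+\rho$ once $t>\rho$), so $\int_{|x|<t-\eta}(|\nabla v|^2+|v_t|^2)\,dx=0$ as soon as $t-\eta>t-\rho$, i.e.\ $\eta<\rho$ — again the wrong direction. The fix: we need the energy of $v$ inside $\{|x|<t-\eta\}$, and since the support sits near $|x|=t$, this region is empty once $t-\eta < t-\rho$, i.e.\ it is empty for all $t$ when $\eta>\rho$... no: $|x|<t-\eta$ and $|x|>t-\rho$ are compatible iff $t-\rho<t-\eta$ iff $\eta<\rho$. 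Hence for $\eta>\rho$ the set $\{|x|<t-\eta\}\cap\operatorname{supp}v(\cdot,t)=\emptyset$ for every $t$, so the sup over $t$ vanishes. Therefore $\limsup_{\eta\to+\infty}\sup_{t>\eta}\int_{|x|<t-\eta}(|\nabla u|^2+|u_t|^2)\,dx\le \eps$, and letting $\eps\to0$ finishes the proof.

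The only genuine subtlety, and the step I expect to require the most care, is the pointwise/$L^2$-weighted decay for the smooth compactly supported piece in the first limit when $d$ is \emph{even} (here $d=4,6$), since strong Huygens fails and $v(\cdot,t)$ has a tail inside the light cone decaying like $t^{-(d+1)/2}$ in $L^\infty$ on compact sets; but this tail decay is more than enough to kill $\int_{|x|<t/2}|v|^2/|x|^2\,dx$ (the inner region), while on $|x|\sim t$ we use $|v(x,t)|^2/|x|^2\lesssim t^{-2}|v(x,t)|^2$ and integrate the $L^2$-bound $\|v(\cdot,t)\|_{L^2}\lesssim 1$ — giving $O(t^{-2})\to 0$. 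I would also note that one may avoid pointwise estimates entirely: $\int_{\Rm^d}|v(x,t)|^2/|x|^2\,dx$, split into $|x|<R$ and $|x|\ge R$; on $|x|\ge R$ it is $\le R^{-2}\|v(\cdot,t)\|_{L^2}^2$ which is small uniformly in $t$ for the free evolution of $L^2$ data (here one uses $\|v(\cdot,t)\|_{L^2}\lesssim_\rho 1+|t|$ combined with finite speed... this grows, so one really does need the confinement of the support), and on $|x|<R$ one uses that the energy of $v$ inside $B(0,R)$ tends to $0$ as $t\to+\infty$ by Proposition \ref{energy flux} (finite speed of propagation, energy escaping to infinity) together with Hardy on the ball. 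That second route is cleaner and avoids even/odd case distinctions, so I would present the proof that way.
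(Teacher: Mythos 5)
Your reduction by density is the same as the paper's, and your treatment of the first limit along the ``pointwise'' route (dispersive decay $|v(x,t)|\lesssim t^{-(d-1)/2}$ together with the support being contained in $\{|x|\le t+\rho\}$) is exactly what the paper does, so that part is fine. Be careful, though, with your ``cleaner second route'': the claim that the energy of $v$ in a \emph{fixed} ball $B(0,R)$ tends to zero does not follow from Proposition \ref{energy flux}, which only gives monotonicity of the energy inside/outside regions moving along light cones; local energy decay in a fixed ball is, in even dimensions, essentially as hard as what you are trying to prove, so you should keep the pointwise-decay argument rather than present the second route.

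The genuine gap is in the second limit. You assert that ``finite speed of propagation forces $v(\cdot,t)$ to be supported in $t-\rho<|x|<t+\rho$'' and conclude that the supremum vanishes once $\eta>\rho$. Finite speed of propagation gives only the outer inclusion $|x|\le t+\rho$; the inner vanishing $v(x,t)=0$ for $|x|<t-\rho$ is the \emph{strong Huygens principle}, valid only in odd dimensions. In even dimensions --- precisely $d=4,6$, which are the new cases this paper is about --- $v$ has a nonzero tail inside the light cone, and one must show that its \emph{energy} (i.e.\ $|\nabla v|^2+|v_t|^2$, not just $|v|^2$) over $\{|x|<t-\eta\}$ is small uniformly in $t>\eta$ as $\eta\to+\infty$. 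This is where the bulk of the paper's proof lives: it differentiates the even-dimensional representation formula and obtains, for $|x|<t-r_0$, bounds of the shape $|\nabla v(x,t)|+|v_t(x,t)|\lesssim_d r_0^d\|v_0\|_{L^\infty}\,t^{-\frac{d-1}{2}}(t-r_0-|x|)^{-\frac{d+3}{2}}+r_0^d\|v_1\|_{L^\infty}\,t^{-\frac{d-1}{2}}(t-r_0-|x|)^{-\frac{d+1}{2}}$, which upon integration over $|x|<t-\eta$ give a quantity of size $\eta^{-(d+2)}+\eta^{-d}$, uniform in $t$ and vanishing as $\eta\to\infty$. You in fact acknowledged the failure of strong Huygens (the interior tail) when discussing the first limit, but then relied on the support confinement for the second; as written, your argument proves the second limit only for odd $d$, and the even-dimensional interior estimate on the derivatives is the missing ingredient.
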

\begin{proof} 
 These results are classical. We give a proof here for readers' convenience.  First of all, we have 
 \begin{align*}
  \int_{\Rm^d} \frac{|u(x,t)|^2}{|x|^2} dx & \lesssim_d \|u(\cdot,t)\|_{\dot{H}^1}^2 \leq \|(u_0,u_1)\|_{\dot{H}^1 \times L^2}^2, \\
  \sup_{t>\eta} \left\{\int_{|x|<t-\tau} \left(|\nabla u(x,t)|^2 + |u_t(x,t)|^2\right) dx\right\} & \leq \|(u(\cdot,t), u_t(\cdot,t))\|_{\dot{H}^1 \times L^2}^2 = \|(u_0,u_1)\|_{\dot{H}^1 \times L^2}^2,
 \end{align*}
 by Hardy's inequality and the unitary property of the linear wave propagation operator, respectively. Therefore we also need to prove the limits for initial data $(u_0,u_1)$ which are smooth and compactly supported. Because these initial data are dense in the space $\dot{H}^1 \times L^2$. Given such initial data $(u_0,u_1)\in C_0^\infty (B(0,r_0)) \subset C_0^\infty (\Rm^d)$, it is well known that $u = \mathbf{S}_L(u_0,u_1)$ satisfies a uniform decay estimate $|u(x,t)| \leq C |t|^{-\frac{d-1}{2}}$. A simple calculation shows 
 \begin{align*}
  \int_{|x|<t+r_0} \frac{|u(x,t)|^2}{|x|^2} dx \lesssim_d \int_{|x|<t+r_0} \frac{C^2 t^{-(d-1)}}{|x|^2} dx \lesssim_d \frac{C^2 (t+r_0)^{d-2}}{|t|^{d-1}}.
 \end{align*}
 We also have $u(x,t)\equiv 0$ if $|x|>t+r_0$ by finite speed of propagation. This immediately prove the first limit. The second one requires a more careful analysis. If $d$ is odd, we have $u(x,t)\equiv 0$ as long as $|x|<t-r_0$, by strong Huygens' principle. Thus in this case we have 
 \[
  \int_{|x|<t-\eta} \left(|\nabla u(x,t)|^2 + |u_t(x,t)|^2\right) dx = 0, \quad \forall t\geq \eta
 \]
 for all $\eta > r_0$. This proves the odd dimensional case. If $d$ is even, however, we have to recall the formula of solution to free wave equation (see section 2.4 of \cite{pdeevans}, for example) in details and obtain
 \begin{align*}
  u(x,t) = & C_d \left(\frac{\partial}{\partial_t}\right)\left(\frac{1}{t} \frac{\partial}{\partial_t}\right)^\frac{d-2}{2} \int_{B(0,r_0)} \frac{u_0(y)}{(t^2-|y-x|^2)^{1/2}} dy\\
  & \qquad + C_d \left(\frac{1}{t} \frac{\partial}{\partial_t}\right)^\frac{d-2}{2} \int_{B(0,r_0)} \frac{u_1(y)}{(t^2-|y-x|^2)^{1/2}} dy,
 \end{align*}
 for all $(x,t) \in \Omega = \{(x,t): |x|<t-r_0\}$. A simple differentiation shows that 
 \begin{align*}
 \nabla u(x,t) = & C_d \left(\frac{\partial}{\partial_t}\right)\left(\frac{1}{t} \frac{\partial}{\partial_t}\right)^\frac{d-2}{2} \int_{B(0,r_0)} \frac{u_0(y)\cdot (x-y)}{(t^2-|y-x|^2)^{3/2}} dy\\
  & \qquad + C_d \left(\frac{1}{t} \frac{\partial}{\partial_t}\right)^\frac{d-2}{2} \int_{B(0,r_0)} \frac{u_1(y)\cdot (x-y)}{(t^2-|y-x|^2)^{3/2}} dy.
 \end{align*}
 Although the expression becomes more and more complicated after we differentiate in $t$ multiple times, each term involved in the calculation must be a constant multiple of
 \[
  \int_{B(0,r_0)} \frac{t^k u_j(y)\cdot(x-y)}{(t^2-|y-x|^2)^{(2n+1)/2}} dy, \quad j \in \{0,1\},\; k\in \mathbb{Z},\; n \in \mathbb{Z}^+,
 \]
When we differentiate in $t$, new terms are introduced by either deducting one from the exponent $k$ or multiplying the integrand by $t/(t^2-|y-x|^2) \geq 1/t$, both up to a constant multiple. Thus the worst terms in the expression of $\nabla u(x,t)$ are constant multiples of
\begin{align*}
 &\int_{B(0,r_0)} \frac{u_0(y)\cdot (x-y)}{(t^2-|y-x|^2)^{3/2}}\cdot t^{-\frac{d-2}{2}}\left(\frac{t}{t^2-|y-x|^2}\right)^{d/2} dy,&\\
  &\int_{B(0,r_0)} \frac{u_1(y)\cdot (x-y)}{(t^2-|y-x|^2)^{3/2}}\cdot t^{-\frac{d-2}{2}}\left(\frac{t}{t^2-|y-x|^2}\right)^{\frac{d-2}{2}} dy.&
\end{align*}
This gives an estimate for all $(x,t)\in \Omega$
\begin{equation*}
 |\nabla u(x,t)|\lesssim_d \frac{r_0^d \|u_0\|_{L^\infty}}{t^{\frac{d-1}{2}}(t-r_0-|x|)^{\frac{d+3}{2}}} + \frac{r_0^d \|u_1\|_{L^\infty}}{t^{\frac{d-1}{2}}(t-r_0-|x|)^{\frac{d+1}{2}}},
\end{equation*} 
because we have $t^2-|y-x|^2 = (t+|y-x|)(t-|y-x|) \geq t (t-r_0-|x|)$ and $|y-x|<t$. A similar argument shows that $|u_t(x,t)|$ can be dominated by the same upper bound for all $(x,t) \in \Omega$. We may substitute both $|\nabla u(x,t)|$ and $u_t(x,t)$ by their upper bound, integrate and obtain
\begin{align*}
 \int_{|x|<t-\eta} (|\nabla u(x,t)|^2 +|u_t(x,t)|^2) dx & \lesssim_d \frac{r_0^{2d} \|u_0\|_{L^\infty}^2}{(\eta-r_0)^{d+2}} + \frac{r_0^{2d} \|u_1\|_{L^\infty}^2} {(\eta-r_0)^{d}}
\end{align*}
for all $\eta > r_0$. This vanishes as $\eta \rightarrow +\infty$. 
\end{proof}

\section{Method of Characteristic Lines} \label{sec: reduction to 1D}

We will rewrite the wave equation with radial initial data as a one-dimensional wave equation, in order to take full advantage of the radial assumption. In the argument below we proceed as though the solution is sufficiently smooth. We may apply smooth approximation techniques to deal with general radial solutions that are not sufficiently smooth. For convenience we first introduce a few notations.
\begin{definition} \label{def of wv}
 Let $u(x,t)$ be a spatially radial function with $(u,u_t) \in C(\Rm; \dot{H}^1 \times L^2(\Rm^d))$. We define a few functions for $(r,t)\in \Rm^+ \times \Rm$:
 \begin{align*}
  &w(r,t) = r^{\frac{d-1}{2}} u(r,t);& &&\\
  &v_+(r,t) = w_t(r,t) - w_r(r,t);& &v_-(r,t) = w_t(r,t) + w_r(r,t).&
 \end{align*}
\end{definition}
\noindent It is clear that $|v_+|^2 + |v_-|^2 = 2r^{d-1}(|\mathbf{L}u|^2+|u_t|^2)$. According to Lemma \ref{relation L2 uv nonradial}, we have
\begin{lemma} \label{relation L2 uv}
Let $u, v_+, v_-$ be as in Definition \ref{def of wv}. Then for any given $t$ we have
\begin{align*}
  \int_{\Rm^d} (|\nabla u|^2 +|u_t|^2) dx = \lambda_d \int_{\Rm^d} \frac{|u|^2}{|x|^2} dx + \frac{c_d}{2}\int_{0}^{\infty} (|v_+|^2 + |v_-|^2) dr. 
\end{align*}
Thus we have $v_+(\cdot,t), v_-(\cdot,t) \in L^2(\Rm^+)$ for all $t$. 
\end{lemma}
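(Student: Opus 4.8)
The plan is to combine the algebraic identity recorded immediately before the lemma with Lemma \ref{relation L2 uv nonradial} and the polar-coordinate formula for integrals of radial functions; no new estimate is needed.

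First I would note that for a radial function $u$ one has $|\nabla u(x)|^2 = |u_r(r)|^2$ pointwise, hence $\int_{\Rm^d}|\nabla u|^2\,dx = \int_{\Rm^d}|u_r|^2\,dx$. Applying Lemma \ref{relation L2 uv nonradial} to $u(\cdot,t)\in\dot H^1(\Rm^d)$ rewrites this as $\int_{\Rm^d}|u_r|^2\,dx = \int_{\Rm^d}\bigl(|\mathbf{L}u|^2 + \lambda_d |u|^2/|x|^2\bigr)\,dx$, so that
\[
 \int_{\Rm^d}(|\nabla u|^2 + |u_t|^2)\,dx = \lambda_d\int_{\Rm^d}\frac{|u|^2}{|x|^2}\,dx + \int_{\Rm^d}\bigl(|\mathbf{L}u|^2 + |u_t|^2\bigr)\,dx .
\]
Next I would pass the last integral to polar coordinates, $\int_{\Rm^d}(|\mathbf{L}u|^2 + |u_t|^2)\,dx = c_d\int_0^\infty(|\mathbf{L}u|^2 + |u_t|^2)\,r^{d-1}\,dr$, and insert the identity $|v_+|^2 + |v_-|^2 = 2r^{d-1}(|\mathbf{L}u|^2 + |u_t|^2)$, which itself is just the expansion $(w_t-w_r)^2+(w_t+w_r)^2 = 2(w_t^2+w_r^2)$ together with $w_t = r^{(d-1)/2}u_t$ and $w_r = \partial_r\bigl(r^{(d-1)/2}u\bigr) = r^{(d-1)/2}\mathbf{L}u$. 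This turns the integral into $\tfrac{c_d}{2}\int_0^\infty(|v_+|^2 + |v_-|^2)\,dr$ and yields the claimed identity.

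Finally, for the $L^2$ membership of $v_\pm(\cdot,t)$: the left-hand side of the identity is finite since $(u(\cdot,t),u_t(\cdot,t))\in\dot H^1\times L^2$, and $\lambda_d=(d-1)(d-3)/4\geq 0$ for $d\geq 3$, so $\tfrac{c_d}{2}\int_0^\infty(|v_+|^2+|v_-|^2)\,dr \leq \|(u(\cdot,t),u_t(\cdot,t))\|_{\dot H^1\times L^2}^2$; in particular $v_+(\cdot,t),v_-(\cdot,t)\in L^2(\Rm^+)$. (Hardy's inequality also shows the term $\lambda_d\int_{\Rm^d}|u|^2/|x|^2\,dx$ is finite, so both sides of the identity are finite and the rearrangement is legitimate.)

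The only genuine point of care is regularity: $u(\cdot,t)$ is merely $\dot H^1$, so $w_r$ and $\mathbf{L}u$ exist only as weak (distributional) derivatives lying in appropriate $L^2$-weighted spaces, and the pointwise identity for $|v_+|^2+|v_-|^2$ must be read in that almost-everywhere sense. To make this rigorous I would first prove everything for smooth, compactly supported radial data — where all manipulations are classical — and then pass to the limit, using that both sides of the identity are continuous functionals of $(u(\cdot,t),u_t(\cdot,t))$ in $\dot H^1\times L^2$ (the left side by definition, the right side by Hardy's inequality and the established identity). This density argument, already invoked at the start of Section \ref{sec: reduction to 1D}, is the main—and quite mild—obstacle; the remainder is bookkeeping.
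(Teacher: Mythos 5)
Your proposal is correct and follows exactly the route the paper intends: the pointwise identity $|v_+|^2+|v_-|^2=2r^{d-1}(|\mathbf{L}u|^2+|u_t|^2)$, Lemma \ref{relation L2 uv nonradial} to trade $\int|u_r|^2$ for $\int|\mathbf{L}u|^2+\lambda_d\int|u|^2/|x|^2$, and polar coordinates; the paper treats this as immediate and gives no further proof. Your added remarks on Hardy's inequality and the density/approximation step are sound but not part of the paper's (essentially one-line) argument.
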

\paragraph{Variation of $v_+, v_-$} Let us consider a radial solution $u(x,t)$ with a finite energy to either linear or nonlinear wave equation
\[
 \partial_t^2 u - \Delta u = \zeta |u|^{p-1} u, \qquad \zeta \in \{-1,0\}.\quad \hbox{(WAVE)}
\]
The coefficient $\zeta = -1$ corresponds to the defocusing case (CP1); while $\zeta =0$ corresponds to the homogenous linear wave equation. A simple calculation verifies the identity
\begin{equation}
 (\partial_t^2 - \partial_r^2) (r^{\frac{d-1}{2}} u) = r^{\frac{d-1}{2}}(\partial_t^2 - \Delta) u - \lambda_d r^{\frac{d-5}{2}} u. \label{transformation uw}
\end{equation}
Therefore $w$, $v_\pm$ defined above satisfy the equation
\begin{align*}
 (\partial_t \pm \partial_r) v_\pm (r,t) = \partial_t^2 w - \partial_r^2 w = - \lambda_d r^{\frac{d-5}{2}} u + \zeta r^{\frac{d-1}{2}} |u|^{p-1} u. 
\end{align*}
This immediately gives variation of $v_\pm$ along characteristic lines $t \pm r = \hbox{Const}$. 
\begin{lemma} \label{variation of v}
 Let $u$ be a radial solution to (WAVE) with a finite energy. Then the function $v_+, v_-$ defined above satisfy
 \begin{align*}
  &v_+(t_2-\eta, t_2) - v_+(t_1-\eta, t_1)  = \int_{t_1}^{t_2} f(t-\eta,t) dt,& &t_2>t_1>\eta;& \\
  &v_-(s-t_2,t_2) - v_-(s-t_1,t_1)  = \int_{t_1}^{t_2} f(s-t,t) dt,& &t_1<t_2<s.&
 \end{align*} % sense L2 eta
 Here the function $f(r,t)$ is defined by
 \[
  f(r,t) = - \lambda_d r^{\frac{d-5}{2}} u(r,t) + \zeta r^{\frac{d-1}{2}} |u|^{p-1} u(r,t).
 \]
\end{lemma}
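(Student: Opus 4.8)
The plan is to integrate, along characteristic lines, the pointwise transport identity
\[
 (\partial_t \pm \partial_r) v_\pm(r,t) = \partial_t^2 w - \partial_r^2 w = f(r,t)
\]
that was just derived from \eqref{transformation uw} (with $w = r^{(d-1)/2}u$ and $v_\pm = w_t \mp w_r$). For the $v_+$ statement, fix $\eta$ and $t_2>t_1>\eta$ and look at the curve $t\mapsto (t-\eta,t)$, along which $t-r\equiv\eta$ is constant. For a sufficiently smooth solution the chain rule gives
\[
 \frac{d}{dt}\,v_+(t-\eta,t) = (\partial_t v_+)(t-\eta,t) + (\partial_r v_+)(t-\eta,t) = \big[(\partial_t+\partial_r)v_+\big](t-\eta,t) = f(t-\eta,t),
\]
and integrating over $[t_1,t_2]$ yields the first formula. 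The $v_-$ statement is entirely symmetric: along $t\mapsto (s-t,t)$ one has $r+t\equiv s$, so $\frac{d}{dt}v_-(s-t,t) = \big[(\partial_t-\partial_r)v_-\big](s-t,t) = f(s-t,t)$, which integrates to the second identity. Note that the hypotheses $t_1>\eta$ and $t_2<s$ force $r=t-\eta>0$ (resp. $r=s-t>0$) on the whole segment, so the characteristic stays in a compact subinterval of $(0,\infty)$ where the weights $r^{(d-5)/2}$, $r^{(d-1)/2}$ and the substitution $w=r^{(d-1)/2}u$ are harmless; no issue arises at the origin.

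For radial finite-energy solutions that are not smooth, I would run the standard approximation argument announced in the paragraph preceding Definition \ref{def of wv}. Choose radial $(u_0^{(n)},u_1^{(n)})\in C_0^\infty(\Rm^d)$ converging to $(u_0,u_1)$ in $\dot H^1\times L^2$, and let $u^{(n)}$ be the corresponding solution to (WAVE); these are globally defined by Proposition \ref{global existence finite energy}, and by Lemma \ref{lemma local in energy space} together with the contraction/continuous-dependence estimates behind it, $u^{(n)}\to u$ in $C([-T,T];\dot H^1\times L^2)$ and in $Y([-T,T])$ for every $T$. The lemma holds for each $u^{(n)}$, and one passes to the limit: on the right, $r$ ranges over the compact set $[t_1-\eta,t_2-\eta]$ bounded away from $0$, the linear part of $f$ is controlled via the radial pointwise bound of Lemma \ref{pointwise estimate 2}, and the nonlinear part via the $Y$-norm; on the left, the traces of $v_\pm^{(n)}$ on the light cone $\{t-|x|=\eta\}$ converge thanks to the uniform flux bounds of Proposition \ref{energy flux integral} and the $L^2$ identity of Lemma \ref{relation L2 uv} (which tie $v_\pm$ to the boundary data controlled by the flux).

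The only genuinely delicate point is making sense of the restriction of $v_\pm$ and of the nonlinearity to a single characteristic line and showing it depends continuously on the data; for a smooth $u$ the whole lemma is a one-line application of the fundamental theorem of calculus to the already-established transport identity, so the proof is short once this approximation is set up. I expect the bookkeeping in the passage to the limit — rather than any new idea — to be the main technical burden.
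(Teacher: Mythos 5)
Your proposal is correct and follows the paper's own route: the paper derives the transport identity $(\partial_t\pm\partial_r)v_\pm=f$ from \eqref{transformation uw} and then obtains the lemma by integrating along the characteristic lines $t\mp r=\hbox{Const}$, working with sufficiently smooth solutions and invoking smooth approximation for general finite-energy data, exactly as you do. Your added detail on the density/continuous-dependence bookkeeping is more than the paper records, but it is the same argument.
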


\paragraph{Upper bounds of the integral} Now let us find an upper bound of the integral of $f$ above. We first recall Lemma \ref{energy flux integral} and obtain
\begin{align*}
 \int_{\eta}^\infty (t-\eta)^{d-3} |u(t-\eta,t)|^2 dt  \lesssim_d \int_{|x|=t-\eta} \frac{|u(x,t)|^2}{|x|^2} dS \lesssim_d E.
\end{align*}
This immediately gives us the following upper bound of integral along characteristic lines
\begin{align*}
 &\;\;\; \int_{t_1}^{t_2} (t-\eta)^{\frac{d-5}{2}} |u(t-\eta,t)| dt\\
  & \leq  \left\{\int_{t_1}^{t_2}\left[(t-\eta)^{\frac{d-3}{2}} |u(t-\eta,t)| \right]^2 dt\right\}^{1/2}
  \left\{\int_{t_1}^{t_2}\left[(t-\eta)^{-1}\right]^2 dt\right\}^{1/2}\\
  &\leq \left\{\int_{\eta}^\infty (t-\eta)^{d-3} |u(t-\eta,t)|^{2} dt\right\}^{1/2} (t_1-\eta)^{-1/2}\\
  &\lesssim_d  E^{1/2} (t_1-\eta)^{-1/2}.
\end{align*}
If $\zeta=-1$, i.e. $u$ solves a defocusing wave equation, Lemma \ref{energy flux integral} gives us another integral estimate
\[
  \int_{\eta}^\infty (t-\eta)^{d-1} |u(t-\eta,t)|^{p+1} dt  \lesssim_d \int_{|x|=t-\eta} |u(x,t)|^{p+1} dS \lesssim_1 E.
\]
This deals with the integral of $\zeta r^{\frac{d-1}{2}} |u|^{p-1} u(r,t)$. 
\begin{align*}
 &\;\;\; \int_{t_1}^{t_2} (t-\eta)^{\frac{d-1}{2}} |u(t-\eta,t)|^p dt\\
  & \leq \left\{\int_{t_1}^{t_2}\left[(t-\eta)^{\frac{(d-1)p}{p+1}}|u(t-\eta,t)|^p\right]^{\frac{p+1}{p}}dt\right\}^{\frac{p}{p+1}}
 \left\{\int_{t_1}^{t_2}\left[(t-\eta)^{-\frac{(d-1)(p-1)}{2(p+1)}}\right]^{p+1} dt\right\}^{\frac{1}{p+1}}\\
 & \leq \left\{\int_{\eta}^\infty (t-\eta)^{d-1} |u(t-\eta,t)|^{p+1} dt\right\}^{\frac{p}{p+1}}
  \left\{\int_{t_1}^{t_2} (t-\eta)^{-\frac{(d-1)(p-1)}{2}} dt\right\}^{\frac{1}{p+1}}\\
&  \lesssim_d E^{\frac{p}{p+1}} (t_1-\eta)^{-\frac{(d-1)(p-1)-2}{2(p+1)}}.
\end{align*}
Our assumption $p \in [p_c(d), p_e(d))$ guarantees that $(d-1)(p-1) \geq 4$. One can also consider the integral of $f$ along characteristic lines $t+r=s$. A similar upper bound can found in the same manner. We may combine these estimates with Lemma \ref{variation of v} to obtain
\begin{proposition} \label{upper bound of variation v}
 Let $u$ be a radial solution to (WAVE) with a finite energy $E$.  Then we have 
 \begin{align*} % sense L2 eta, fixed t's almost everywhere eta
   \left|v_+(t_2-\eta,t_2)-v_+(t_1-\eta, t_1)\right| &\lesssim_d E^{1/2} (t_1-\eta)^{-1/2} + |\zeta| E^{\frac{p}{p+1}} (t_1-\eta)^{-\beta(d,p)} ;\\
   \left|v_-(s-t_2,t_2)-v_-(s-t_1, t_1)\right| & \lesssim_d E^{1/2} (s-t_2)^{-1/2} + |\zeta|E^{\frac{p}{p+1}} (s-t_2)^{-\beta(d,p)};
 \end{align*}
 for all $\eta<t_1<t_2<s$. The decay rate $\beta(d,p) \doteq \frac{(d-1)(p-1)-2}{2(p+1)}$ always satisfies $0<\beta(d,p) < \frac{1}{2}$ by our assumption $p_c(d) \leq p<p_e(d)$. 
\end{proposition}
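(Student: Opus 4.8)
\emph{Proof plan.} The plan is to combine the two identities of Lemma~\ref{variation of v} with the light-cone surface integral bounds of Proposition~\ref{energy flux integral}, estimating the resulting one-dimensional integrals along characteristic lines by H\"older's inequality. I describe the argument for the $v_+$ estimate; the $v_-$ estimate is obtained in exactly the same way using the backward characteristic lines $t+r=s$.

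First, Lemma~\ref{variation of v} gives
\[
 v_+(t_2-\eta,t_2)-v_+(t_1-\eta,t_1) = -\lambda_d\int_{t_1}^{t_2}(t-\eta)^{\frac{d-5}{2}}u(t-\eta,t)\,dt + \zeta\int_{t_1}^{t_2}(t-\eta)^{\frac{d-1}{2}}|u|^{p-1}u(t-\eta,t)\,dt,
\]
so it suffices to bound the two integrals separately. For the first, I convert the surface integral in the second inequality of Proposition~\ref{energy flux integral} into a one-dimensional integral along the light cone $\{t-|x|=\eta\}$ (using $\int_{|x|=r}g\,d\sigma_r=c_d r^{d-1}g(r)$ together with the fact that the surface element on the cone is a constant multiple of $c_d r^{d-1}\,dt$ along the parametrization $r=t-\eta$), which yields $\int_\eta^\infty (t-\eta)^{d-3}|u(t-\eta,t)|^2\,dt\lesssim_d E$. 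Factoring $(t-\eta)^{\frac{d-5}{2}}=(t-\eta)^{\frac{d-3}{2}}\cdot(t-\eta)^{-1}$ and applying the Cauchy--Schwarz inequality then gives
\[
 \int_{t_1}^{t_2}(t-\eta)^{\frac{d-5}{2}}|u(t-\eta,t)|\,dt \le \Big(\int_\eta^\infty (t-\eta)^{d-3}|u|^2\,dt\Big)^{1/2}\Big(\int_{t_1}^\infty (t-\eta)^{-2}\,dt\Big)^{1/2}\lesssim_d E^{1/2}(t_1-\eta)^{-1/2}.
\]

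For the second integral (present only when $\zeta=-1$), the same conversion applied to the first inequality of Proposition~\ref{energy flux integral} gives $\int_\eta^\infty (t-\eta)^{d-1}|u(t-\eta,t)|^{p+1}\,dt\lesssim_1 E$. I then apply H\"older's inequality with conjugate exponents $\tfrac{p+1}{p}$ and $p+1$, splitting the integrand as $(t-\eta)^{\frac{(d-1)p}{p+1}}|u|^p$ times $(t-\eta)^{\frac{d-1}{2}-\frac{(d-1)p}{p+1}}=(t-\eta)^{-\frac{(d-1)(p-1)}{2(p+1)}}$. The first factor raised to the power $\tfrac{p+1}{p}$ reproduces the cone integral above, contributing $\lesssim_d E^{\frac{p}{p+1}}$; the second factor raised to the power $p+1$ equals $(t-\eta)^{-\frac{(d-1)(p-1)}{2}}$, which is integrable on $[t_1,\infty)$ precisely because $p\ge p_c(d)$ forces $(d-1)(p-1)\ge 4>2$, and whose integral contributes the factor $(t_1-\eta)^{-\frac{(d-1)(p-1)-2}{2(p+1)}}=(t_1-\eta)^{-\beta(d,p)}$. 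Adding the two contributions yields the stated bound for $v_+$.

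For $v_-$ one first needs the analogue of Proposition~\ref{energy flux integral} for backward light cones $\{|x|+t=s\}$; this follows verbatim, using the monotonicity of $E(t;B(0,s-t))$ from Proposition~\ref{energy flux} in place of that of $E(t;B(0,t-\eta))$ and the same Hardy-inequality step. The remaining estimates are then word-for-word identical with $t_1-\eta$ replaced by $s-t_2$. Finally the claim $0<\beta(d,p)<\tfrac12$ is elementary: $\beta(d,p)>0\iff (d-1)(p-1)>2$, which holds as just noted, while $\beta(d,p)<\tfrac12\iff (d-1)(p-1)-2<p+1\iff (d-2)(p-1)<4\iff p<1+\tfrac{4}{d-2}=p_e(d)$, which is part of the standing hypothesis.

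\emph{Main obstacle.} There is no serious analytic difficulty: all of it is already packaged in Lemma~\ref{variation of v} and Proposition~\ref{energy flux integral}. The only points requiring care are the bookkeeping in passing from the surface integrals over light cones to one-dimensional integrals along them, and the choice of H\"older exponents so that the residual power of $t-\eta$ is integrable at infinity and produces exactly the exponent $\beta(d,p)$; this is where the superconformal condition $p\ge p_c(d)$ is used, and where the excluded conformal endpoint $p=p_c(d)$ would correspond to the borderline case $\beta=\tfrac12$ with logarithmic divergence, so the strict inequality $\beta<\tfrac12$ and the decay it provides hinge on $p<p_e(d)$.
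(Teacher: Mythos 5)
Your proof is correct and follows essentially the same route as the paper: Lemma \ref{variation of v} combined with the light-cone surface bounds of Proposition \ref{energy flux integral}, Cauchy--Schwarz for the term $\lambda_d r^{\frac{d-5}{2}}u$ and H\"older with exponents $\frac{p+1}{p}$, $p+1$ for the nonlinear term, with the mirrored argument along $t+r=s$ and the elementary check $0<\beta(d,p)<\frac12$. One slip in your closing remark only (it does not affect the argument): the conformal endpoint $p=p_c(d)$ is \emph{not} excluded (the hypothesis is $p_c(d)\leq p<p_e(d)$) and does not give $\beta=\tfrac12$; the borderline $\beta=\tfrac12$ corresponds to the excluded energy-critical endpoint $p=p_e(d)$, while the integrability in the H\"older step needs only $(d-1)(p-1)>2$, which already holds at $p=p_c(d)$ where $(d-1)(p-1)=4$.
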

\paragraph{The limits of $v_\pm$} By Lemma \ref{relation L2 uv}, we have $\|v_+(t-\eta,t)\|_{L_\eta^2((-\infty,t))}^2 \leq 4E/c_d$ for all time $t$. Proposition \ref{upper bound of variation v} implies that given any $\eta_1<\eta_2$, the functions $v_+(t-\eta,t)$ converges in the space $L_\eta^{2}([\eta_1,\eta_2])$ as $t\rightarrow +\infty$. Therefore there exists a function $g_+(\eta) \in L^2(\Rm)$ with $\|g_+\|_{L^2(\Rm)}^2 \leq E/c_d$ so that 
\[
 v_+(t-\eta,t) \rightarrow 2g_+(\eta) \quad \hbox{in} \; L_{loc}^2 (\Rm),\quad \hbox{as}\; t \rightarrow +\infty.
\]
The asymptotic behaviour of $v_-$ is similar as $t \rightarrow -\infty$. In summary we have
\begin{proposition} \label{introduction of g}
 Let $u$ be a radial solution to (WAVE) with a finite energy $E$. Then there exists two unique functions $g_+, g_-$ with $\|g_+\|_{L^2(\Rm)}^2, \|g_-\|_{L^2(\Rm)}^2 \leq E/c_d$ so that we have the following local $L^2$ convergence
 \begin{align*}
  v_+(t-\eta,t) \rightarrow 2g_+(\eta) \quad \hbox{in} \; L_{loc}^2 (\Rm),\quad \hbox{as}\; t \rightarrow +\infty;\\
  v_-(s-t,t) \rightarrow 2g_-(s) \quad \hbox{in}\; L_{loc}^2 (\Rm),\quad \hbox{as}\; t \rightarrow -\infty.
 \end{align*}
\end{proposition}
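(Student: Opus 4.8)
The plan is to extract the claimed limits from the variation estimates of Proposition \ref{upper bound of variation v} together with the uniform $L^2$ bound from Lemma \ref{relation L2 uv}. First I would fix a compact interval $[\eta_1,\eta_2]\subset \Rm$ and show that $\{v_+(\cdot-\eta,\,\cdot)\}$, viewed as a family of functions of $\eta$ indexed by $t$, is Cauchy in $L^2_\eta([\eta_1,\eta_2])$ as $t\to+\infty$. Indeed, for $t_2>t_1$ large and $\eta\in[\eta_1,\eta_2]$, Proposition \ref{upper bound of variation v} gives the pointwise-in-$\eta$ bound
\[
 \bigl|v_+(t_2-\eta,t_2)-v_+(t_1-\eta,t_1)\bigr| \lesssim_d E^{1/2}(t_1-\eta_2)^{-1/2} + |\zeta| E^{\frac{p}{p+1}} (t_1-\eta_2)^{-\beta(d,p)},
\]
which is uniform over the compact $\eta$-interval and tends to $0$ as $t_1\to+\infty$ since $\beta(d,p)>0$. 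Squaring and integrating over $[\eta_1,\eta_2]$ (a set of finite measure) shows the $L^2_\eta([\eta_1,\eta_2])$-Cauchy property, so a limit $2g_+(\eta)$ exists in $L^2_\eta([\eta_1,\eta_2])$; by a standard exhaustion/diagonal argument over an increasing sequence of compact intervals we obtain a well-defined $g_+ \in L^2_{loc}(\Rm)$ with $v_+(t-\eta,t)\to 2g_+(\eta)$ in $L^2_{loc}(\Rm)$.

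Next I would upgrade $g_+$ to a genuine $L^2(\Rm)$ function with the stated norm bound. For this, fix any compact $[\eta_1,\eta_2]$ and note that by Lemma \ref{relation L2 uv}, for each fixed $t$,
\[
 \frac{c_d}{2}\int_{-\infty}^{t} |v_+(t-\eta,t)|^2\, d\eta \le \int_{\Rm^d}(|\nabla u|^2+|u_t|^2)\,dx = 2E,
\]
hence $\int_{\eta_1}^{\eta_2}|v_+(t-\eta,t)|^2\,d\eta \le 4E/c_d$ whenever $t\ge\eta_2$. Passing to the limit $t\to+\infty$ using the $L^2_\eta([\eta_1,\eta_2])$-convergence just established gives $\int_{\eta_1}^{\eta_2}|2g_+(\eta)|^2\,d\eta \le 4E/c_d$, i.e. $\int_{\eta_1}^{\eta_2}|g_+|^2\,d\eta \le E/c_d$. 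Since this holds for every compact interval with a constant independent of $\eta_1,\eta_2$, the monotone convergence theorem yields $\|g_+\|_{L^2(\Rm)}^2 \le E/c_d$. Uniqueness of $g_+$ is immediate: any two $L^2_{loc}$ limits of the same net must agree a.e.

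The negative-time statement for $v_-$ is entirely analogous, using the second estimate of Proposition \ref{upper bound of variation v} along the backward characteristics $t+r=s$ and the identically-structured $L^2$ bound $\frac{c_d}{2}\int_{-\infty}^{s}|v_-(s-t,t)|^2\,dt$ from Lemma \ref{relation L2 uv} (which also controls $v_-$ on $L^2(\Rm^+)$ at each time). I expect no serious obstacle here; the only point requiring a little care is keeping the convergence and the norm bound consistent when the lower endpoint of the characteristic variable $\eta$ is near $t$ — but since we only claim $L^2_{loc}$ convergence and pass to the norm bound through compact exhaustion, the variation estimate (valid for $\eta<t_1$) always applies on the relevant range once $t_1$ is large. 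One should also briefly remark, as the excerpt already does, that the smoothness assumed in the characteristic-line computation is removed by the standard smooth-approximation argument, the estimates being stable under the relevant limits.
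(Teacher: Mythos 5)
Your argument is correct and is essentially the paper's own proof: the uniform-in-$\eta$ variation bound of Proposition \ref{upper bound of variation v} yields the Cauchy property of $v_+(t-\eta,t)$ in $L^2_\eta$ on compact intervals, and the bound $\frac{c_d}{2}\int_0^\infty|v_+(r,t)|^2\,dr\leq 2E$ from Lemma \ref{relation L2 uv} passes to the limit to give $\|g_+\|_{L^2(\Rm)}^2\leq E/c_d$, with the $v_-$ case handled symmetrically. Only a cosmetic slip: for $g_-$ the relevant bound is, at each fixed time $t$, $\int_t^{\infty}|v_-(s-t,t)|^2\,ds\leq 4E/c_d$ (integration in $s$ via $r=s-t$, not in $t$), which is what your parenthetical remark in fact indicates.
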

\begin{definition} \label{def of T}
 Let $\dot{H}_{rad}^1\times L_{rad}^2(\Rm^d)$ be the space of radial $\dot{H}^1 \times L^2$ functions. We may define a bounded linear operator $\mathbf{T}_+$ from this space to $L^2(\Rm)$. Given any $(u_0,u_1) \in \dot{H}_{rad}^1\times L_{rad}^2(\Rm^d)$, the function $u = \mathbf{S}_L (u_0,u_1)$ is a radial solution to homogenous linear wave equation. We define 
 \[
  \mathbf{T}_+ (u_0,u_1) = g_+.
 \]
 The function $g_+$ is defined as in Proposition \ref{introduction of g}. 
\end{definition}
\subsection{Convergence rate of nonlinear solution} \label{sec: convergence rate}
Assume that $u$ is a radial to (CP1) with a finite energy $E$. Now let us consider the convergence rate of $v_\pm$ to $g_\pm$.  Let us recall Proposition \ref{upper bound of variation v} and let $t_2 \rightarrow +\infty$ in the first inequality 
\[
 \left|2g_+(\eta)-v_+(t -\eta, t)\right| \lesssim_{d,E} (t-\eta)^{-\beta(d,p)}, \quad \eta < t-1. %almost everywhere eta<t
\]
We apply a change of variable $r = t - \eta$ and rewrite this in the form 
\[
 \left|v_+(r, t) - 2g_+(t-r)\right| \lesssim_{d,E} r^{-\beta(d,p)}, \quad r>1. %almost everywhere r>0
\]
Similarly we have
\[
 \left|v_-(r, t) - 2g_-(t+r)\right| \lesssim_{d,E} r^{-\beta(d,p)}, \quad r>1.
\]
These immediately gives the following upper limits for all constants $c,R>0$ and $\gamma \in [0,2\beta(d,p))$:
 \begin{align*}
  \limsup_{t \rightarrow + \infty} \int_{t-c\cdot t^{\gamma}}^{t+R} \left(\left|v_+(r, t) - 2g_+(t-r)\right|^2 + \left|v_-(r, t) - 2g_-(t+r)\right|^2\right) dr & = 0;\\
  \limsup_{t \rightarrow + \infty} \int_{t-c\cdot t^{2\beta(d,p)}}^{t+R} \left(\left|v_+(r, t) - 2g_+(t-r)\right|^2 + \left|v_-(r, t) - 2g_-(t+r)\right|^2\right) dr & \lesssim_{d,E} c;
 \end{align*}
We may ignore $g_-(t+r)$ in the upper limits above because
\begin{align*}
 \lim_{t\rightarrow +\infty} \int_{0}^\infty |g_-(t+r)|^2 dr = \lim_{t\rightarrow +\infty} \int_{t}^\infty |g_-(s)|^2 ds = 0.
\end{align*}
Next we recall $v_\pm = w_t \mp w_r$ and rewrite the upper limits above in terms of $w$
 \begin{align*}
  \limsup_{t \rightarrow + \infty} \int_{t-c\cdot t^{\gamma}}^{t+R} \left(\left|w_r (r, t) + g_+(t-r)\right|^2 + \left|w_t(r, t) - g_+(t-r)\right|^2\right) dr & = 0;\\
  \limsup_{t \rightarrow + \infty} \int_{t-c\cdot t^{2\beta(d,p)}}^{t+R} \left(\left|w_r(r, t) + g_+(t-r)\right|^2 + \left|w_t(r, t) - g_+(t-r)\right|^2\right) dr & \lesssim_{d,E} c;
 \end{align*}
Finally we utilize the identities $r^{\frac{d-1}{2}} u_r = w_r - (d-1) r^{\frac{d-3}{2}} u/2$, $r^{\frac{d-1}{2}} u_t = w_t$ and a direct consequence of the pointwise estimate $|u(r,t)| \lesssim_{d,E} r^{-\frac{2(d-1)}{p+3}}$ (See Lemma \ref{pointwise estimate 2})
 \begin{align*}
  \int_{t/2}^\infty r^{d-3} |u(r,t)|^2 dr  \lesssim_{d,E} t^{-\frac{(d+2)-(d-2)p}{p+3}}\quad \Rightarrow \quad \lim_{t\rightarrow +\infty} \int_{t/2}^\infty \left|r^{\frac{d-3}{2}} u(r,t)\right|^2 dr = 0.
 \end{align*}
 to conclude
\begin{proposition} \label{middle convergence defocusing}
 Let $u$ be a radial solution to (CP1) with a finite energy $E$. Given any constants $c, R>0$ and $\gamma\in [0,2\beta(d,p))$, we have
 \begin{align*}
 \limsup_{t \rightarrow + \infty} \int_{t-c\cdot t^{\gamma}}^{t+R} \left(\left|r^{\frac{d-1}{2}}u_r(r,t) + g_+(t-r)\right|^2 + \left|r^{\frac{d-1}{2}} u_t(r,t) - g_+(t-r)\right|^2\right) dr & = 0;\\
  \limsup_{t \rightarrow + \infty} \int_{t-c\cdot t^{2\beta(d,p)}}^{t+R} \left(\left|r^{\frac{d-1}{2}}u_r(r,t) + g_+(t-r)\right|^2 + \left|r^{\frac{d-1}{2}} u_t(r,t) - g_+(t-r)\right|^2\right) dr & \lesssim_{d,E} c;
 \end{align*}
\end{proposition}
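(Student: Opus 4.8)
The plan is to extract from Proposition~\ref{upper bound of variation v} a pointwise-in-$(r,t)$ rate of convergence of $v_+$ to $2g_+$, integrate the resulting power of $r$ over $[t-ct^\gamma,t+R]$, discard the harmless $g_-$-contribution, and finally pass from the $v_\pm$ variables (equivalently $w_r,w_t$) to $r^{(d-1)/2}u_r$ and $r^{(d-1)/2}u_t$ using the pointwise decay of Lemma~\ref{pointwise estimate 2}. For the first step I would fix $\eta$ and $t_1>\eta$ and let $t_2\to+\infty$ in the first inequality of Proposition~\ref{upper bound of variation v}: since $\zeta=-1$ here, $v_+(t_2-\eta,t_2)\to 2g_+(\eta)$ by Proposition~\ref{introduction of g}, and since $\beta(d,p)<\tfrac12$ the term $(t_1-\eta)^{-1/2}$ is dominated by $(t_1-\eta)^{-\beta(d,p)}$; writing $r=t-\eta$ this yields
\[
 |v_+(r,t)-2g_+(t-r)|\ \lesssim_{d,E}\ r^{-\beta(d,p)},\qquad r\ge 1,
\]
and, in the same way, $|v_-(r,t)-2g_-(t+r)|\lesssim_{d,E}r^{-\beta(d,p)}$ for $r\ge 1$.

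Next I would integrate. Since $\gamma<1$ and $2\beta(d,p)<1$, for $t$ large the interval $[t-ct^\gamma,t+R]$ is contained in $[t/2,\infty)\subset[1,\infty)$, so the bounds of the previous step apply throughout it, and
\[
 \int_{t-ct^\gamma}^{t+R} r^{-2\beta(d,p)}\,dr\ \lesssim_{d,E}\ R\,t^{-2\beta(d,p)}+c\,t^{\gamma-2\beta(d,p)},
\]
which tends to $0$ if $\gamma<2\beta(d,p)$ and is $\lesssim_{d,E} c$ if $\gamma=2\beta(d,p)$. This already gives the two asserted $\limsup$ bounds with integrand $|v_+(r,t)-2g_+(t-r)|^2+|v_-(r,t)-2g_-(t+r)|^2$. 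Because $g_-\in L^2(\Rm)$ we have $\int_{t-ct^\gamma}^{t+R}|g_-(t+r)|^2\,dr\le\int_t^\infty|g_-(s)|^2\,ds\to 0$, so by the triangle inequality $|v_-(r,t)-2g_-(t+r)|^2$ may be replaced by $|v_-(r,t)|^2$ without affecting the $\limsup$.

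Finally I would transfer the estimate to $u$. From $v_\pm=w_t\mp w_r$ we get $w_t=\tfrac12(v_++v_-)$ and $w_r=\tfrac12(v_--v_+)$, so both $|w_t-g_+(t-r)|^2$ and $|w_r+g_+(t-r)|^2$ are $\lesssim|v_+(r,t)-2g_+(t-r)|^2+|v_-(r,t)|^2$; hence the $\limsup$ bounds hold with integrand $|w_r+g_+(t-r)|^2+|w_t-g_+(t-r)|^2$. Now $r^{(d-1)/2}u_t=w_t$ exactly, while $r^{(d-1)/2}u_r=w_r-\tfrac{d-1}{2}r^{(d-3)/2}u$, and the correction term contributes at most
\[
 \int_{t/2}^\infty r^{d-3}|u(r,t)|^2\,dr\ \lesssim_{d,E}\ \int_{t/2}^\infty r^{d-3-\frac{4(d-1)}{p+3}}\,dr\ \lesssim_{d,E}\ t^{-\frac{(d+2)-(d-2)p}{p+3}}\ \longrightarrow\ 0,
\]
by the estimate $|u(r,t)|\lesssim_{d,E}r^{-2(d-1)/(p+3)}$ of Lemma~\ref{pointwise estimate 2}; combining the three steps gives both displayed limits.

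The only point that needs real care is this last computation: one must check that the exponent $d-3-\frac{4(d-1)}{p+3}$ is strictly less than $-1$, so that $\int_{t/2}^\infty r^{d-3}|u|^2\,dr$ is finite and decays, and a short manipulation shows that this is exactly the energy-subcriticality hypothesis $p<p_e(d)=1+\tfrac{4}{d-2}$. One should also note, more trivially, that $\gamma\le 2\beta(d,p)<1$ guarantees $t-ct^\gamma>1$ for large $t$, which is what lets the pointwise bound be applied on the full interval of integration; apart from these two observations every step is routine.
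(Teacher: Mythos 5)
Your proposal is correct and follows essentially the same route as the paper: let $t_2\to+\infty$ in Proposition \ref{upper bound of variation v} to get the pointwise rate $|v_\pm-2g_\pm|\lesssim_{d,E}r^{-\beta(d,p)}$, integrate over $[t-ct^\gamma,t+R]$, discard the $g_-$ tail, and convert from $(w_r,w_t)$ to $(r^{\frac{d-1}{2}}u_r,r^{\frac{d-1}{2}}u_t)$ using $r^{\frac{d-1}{2}}u_r=w_r-\frac{d-1}{2}r^{\frac{d-3}{2}}u$ and the pointwise decay of Lemma \ref{pointwise estimate 2}. Your explicit check that the exponent condition $d-3-\frac{4(d-1)}{p+3}<-1$ is precisely $p<p_e(d)$ is a detail the paper leaves implicit, but the argument is the same.
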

\subsection{Global $L^2$ convergence of free wave}
The same argument as in Subsection \ref{sec: convergence rate} also works for radial free waves $u$. In fact, the convergence rate is even better for large $r$:
\[
  \left|v_+(r, t) - 2g_+(t-r)\right| +  \left|v_-(r, t) - 2g_-(t+r)\right| \lesssim_d E^{1/2} r^{-1/2}.
\]
As a result, we have the following limit for all $R>0$ and $0<\gamma<1$
 \begin{equation} \label{middle estimate linear}
 \limsup_{t \rightarrow + \infty} \int_{t- t^{\gamma}}^{t+R} \left(\left|r^{\frac{d-1}{2}}u_r(r,t) + g_+(t-r)\right|^2 + \left|r^{\frac{d-1}{2}} u_t(r,t) - g_+(t-r)\right|^2\right) dr = 0.
 \end{equation}
Here we apply Lemma \ref{asymptotic linear} to deal with the term $\left|r^{\frac{d-3}{2}} u(r,t)\right|^2$:
\[
 \lim_{t\rightarrow +\infty} \int_{0}^\infty \left|r^{\frac{d-3}{2}} u(r,t)\right|^2 dr = \lim_{t \rightarrow +\infty} \frac{1}{c_d} \int_{\Rm^d} \frac{|u(x,t)|^2}{|x|^2} dx = 0.
\]
By Proposition \ref{energy flux} and Lemma \ref{asymptotic linear} we also have
\begin{align*}
 \lim_{R\rightarrow +\infty} \sup_{t>0} \left\{\int_{t+R}^\infty \left(\left|r^{\frac{d-1}{2}}u_r(r,t)\right|^2 + \left|r^{\frac{d-1}{2}} u_t(r,t)\right|^2 \right) dr\right\} & = 0;\\
 \lim_{\eta \rightarrow +\infty} \sup_{t>\eta} \left\{\int_{0}^{t-\eta} \left(\left|r^{\frac{d-1}{2}}u_r(r,t)\right|^2 + \left|r^{\frac{d-1}{2}} u_t(r,t)\right|^2 \right) dr\right\} & = 0.
\end{align*}
In addition, we may apply the change of variable $\eta' = t-r$ and obtain
\begin{align*}
 \lim_{R\rightarrow +\infty}  \sup_{t>0} \left\{\int_{t+R}^\infty \left|g_+(t-r)\right|^2 dr \right\}& = \lim_{R\rightarrow +\infty}  \int_{-\infty}^{-R} |g_+(\eta')|^2 d\eta' =0,\\
 \lim_{\eta\rightarrow +\infty}  \sup_{t>\eta} \left\{\int_{0}^{t-\eta} \left|g_+(t-r)\right|^2 dr \right\} & = \lim_{\eta\rightarrow +\infty}  \int_{\eta}^{\infty} |g_+(\eta')|^2 d\eta' =0.
\end{align*}
Combining the estimates of $u$ and $g_+$ given above, we have
\begin{align*}
 \lim_{R\rightarrow +\infty}\sup_{t >0} \left\{\int_{t+R}^{+\infty} \left(\left|r^{\frac{d-1}{2}}u_r(r,t) + g_+(t-r)\right|^2 + \left|r^{\frac{d-1}{2}} u_t(r,t) - g_+(t-r)\right|^2\right) dr\right\} & = 0;\\
 \lim_{\eta\rightarrow +\infty}\sup_{t >\eta} \left\{\int_{0}^{t-\eta} \left(\left|r^{\frac{d-1}{2}}u_r(r,t) + g_+(t-r)\right|^2 + \left|r^{\frac{d-1}{2}} u_t(r,t) - g_+(t-r)\right|^2\right) dr\right\} & = 0.
\end{align*}
Finally we combine these limits with \eqref{middle estimate linear} and obtain
\begin{proposition} \label{global L2 linear}
 Let $u$ be a radial finite-energy solution to the free wave equation. We have
 \begin{align*}
 \lim_{t \rightarrow + \infty} \int_{0}^\infty \left(\left|r^{\frac{d-1}{2}}u_r(r,t) + g_+(t-r)\right|^2 + \left|r^{\frac{d-1}{2}} u_t(r,t) - g_+(t-r)\right|^2\right) dr  = 0.
 \end{align*}
\end{proposition}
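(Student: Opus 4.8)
The plan is to establish the global $L^2$ convergence by splitting the half-line $(0,\infty)$ into three overlapping pieces and estimating the integrand separately on each: an inner region $r\in(0,t-\eta)$, a middle region $r\in(t-\eta,t+R)$, and a far region $r\in(t+R,\infty)$. All three regional bounds are already assembled in the discussion above, so the argument reduces to choosing the cut-off parameters $\eta$ and $R$ and combining the pieces. Throughout, $(\cdots)$ abbreviates the integrand $\bigl|r^{\frac{d-1}{2}}u_r(r,t)+g_+(t-r)\bigr|^2+\bigl|r^{\frac{d-1}{2}}u_t(r,t)-g_+(t-r)\bigr|^2$ appearing in the statement.

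Concretely, I would fix $\eps>0$. Using the far-region limit $\lim_{R\to+\infty}\sup_{t>0}\int_{t+R}^{\infty}(\cdots)\,dr=0$, which was derived from Proposition \ref{energy flux}, Lemma \ref{asymptotic linear}, and the tail estimate $\int_{-\infty}^{-R}|g_+|^2\,d\eta'\to0$, I would pick $R$ large enough that this supremum is below $\eps/3$. Likewise, using the inner-region limit $\lim_{\eta\to+\infty}\sup_{t>\eta}\int_0^{t-\eta}(\cdots)\,dr=0$, which comes from the second limit in Lemma \ref{asymptotic linear} together with $\int_\eta^{\infty}|g_+|^2\,d\eta'\to0$, I would pick $\eta$ large enough that this supremum is below $\eps/3$.

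It then remains to control the middle region $r\in(t-\eta,t+R)$, and here I would invoke \eqref{middle estimate linear}: for any fixed $\gamma\in(0,1)$ one has $\int_{t-t^{\gamma}}^{t+R}(\cdots)\,dr\to0$ as $t\to+\infty$. Since $t^{\gamma}\to+\infty$, for all sufficiently large $t$ we have $t-t^{\gamma}<t-\eta$, so $(t-\eta,t+R)\subseteq(t-t^{\gamma},t+R)$ and the middle-region integral is dominated by the quantity appearing in \eqref{middle estimate linear}; hence it is below $\eps/3$ once $t$ exceeds a suitable threshold (which may also be enlarged to guarantee $t>\eta$ and $t^{\gamma}>\eta$). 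Adding the three contributions yields $\int_0^{\infty}(\cdots)\,dr<\eps$ for all large $t$, which is the assertion.

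I do not expect a genuine obstacle at this stage, since the heavy lifting was done in establishing the three regional limits — the middle one relying on the convergence-rate estimate for $v_\pm$ and the outer and inner ones on the dispersive decay of free waves in Lemma \ref{asymptotic linear}. The one subtlety worth flagging is that the three regions must be arranged so as to overlap and jointly cover all of $(0,\infty)$; this is exactly where it matters that the middle estimate is available down to $r=t-t^{\gamma}$ with $t^{\gamma}$ diverging, rather than only down to a fixed distance behind the light cone.
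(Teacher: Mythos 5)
Your proposal is correct and follows essentially the same route as the paper: the paper also concludes by combining the far-region and inner-region uniform limits (from Proposition \ref{energy flux}, Lemma \ref{asymptotic linear}, and the $g_+$ tail estimates) with the middle-region estimate \eqref{middle estimate linear}, and your $\eps/3$ argument with the observation that $(t-\eta,t+R)\subseteq(t-t^{\gamma},t+R)$ for large $t$ is exactly the combination step the paper leaves implicit.
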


\section{Radial Linear Solutions} \label{sec: radiation}
We first give a lemma ($v_+, \tilde{v}_+$ are defined as in Definition \ref{def of wv})
\begin{lemma} \label{identity of radiation field}
 Assume that $u, \tilde{u} \in C(\Rm; \dot{H}_{rad}^1 \times L_{rad}^2)$ satisfy
 \begin{equation}
 \lim_{t \rightarrow + \infty} \left\|(u(\cdot,t), u_t(\cdot,t))-(\tilde{u}(\cdot,t), \tilde{u}_t(\cdot,t))\right\|_{\dot{H}^1\times L^2(\Rm^d)} = 0. \label{scattering condition}
\end{equation}
If the functions $v_+(t-\eta,t), \tilde{v}_+(t-\eta,t)$ converge in $L_{loc}^2(\Rm)$ to $2g_+(\eta)$ and $2\tilde{g}_+(\eta)$, respectively, when $t\rightarrow +\infty$,  then we must have $g_+ = \tilde{g}_+$.
\end{lemma}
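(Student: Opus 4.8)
The plan is to apply Lemma~\ref{relation L2 uv} to the difference $\bar u \doteq u - \tilde u$ and to use that the scattering condition \eqref{scattering condition} forces the associated energy-type quantity to vanish as $t\to+\infty$.

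First I would observe that, since $\dot H^1_{rad}\times L^2_{rad}$ is a linear space, $\bar u \in C(\Rm;\dot H^1_{rad}\times L^2_{rad})$, so the functions $\bar w = r^{\frac{d-1}{2}}\bar u$ and $\bar v_\pm = \bar w_t \mp \bar w_r$ of Definition~\ref{def of wv} are well defined for $\bar u$; by linearity $\bar w = w - \tilde w$ and hence $\bar v_+ = v_+ - \tilde v_+$. Applying Lemma~\ref{relation L2 uv} to $\bar u$ at each fixed time $t$ gives
\[
 \|(\bar u(\cdot,t),\bar u_t(\cdot,t))\|_{\dot H^1\times L^2}^2 = \lambda_d\int_{\Rm^d}\frac{|\bar u(x,t)|^2}{|x|^2}\,dx + \frac{c_d}{2}\int_0^\infty\left(|\bar v_+(r,t)|^2 + |\bar v_-(r,t)|^2\right)dr .
\]
For $3\le d\le 6$ we have $\lambda_d = (d-1)(d-3)/4 \ge 0$, so both terms on the right are nonnegative, while the left side tends to $0$ as $t\to+\infty$ by \eqref{scattering condition}. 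In particular $\int_0^\infty |\bar v_+(r,t)|^2\,dr \to 0$.

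Next I would translate this into convergence of $\bar v_+(t-\eta,t)$. Fix any bounded interval $[\eta_1,\eta_2]\subset\Rm$; once $t$ is large enough that $t-\eta_2>0$, the substitution $r = t-\eta$ yields
\[
 \int_{\eta_1}^{\eta_2}|\bar v_+(t-\eta,t)|^2\,d\eta = \int_{t-\eta_2}^{t-\eta_1}|\bar v_+(r,t)|^2\,dr \le \int_0^\infty|\bar v_+(r,t)|^2\,dr \longrightarrow 0 ,
\]
so $\bar v_+(t-\eta,t)\to 0$ in $L^2_{loc}(\Rm)$. On the other hand the hypotheses give $v_+(t-\eta,t)\to 2g_+(\eta)$ and $\tilde v_+(t-\eta,t)\to 2\tilde g_+(\eta)$ in $L^2_{loc}(\Rm)$, hence $\bar v_+(t-\eta,t) = v_+(t-\eta,t)-\tilde v_+(t-\eta,t)\to 2(g_+ - \tilde g_+)$ in $L^2_{loc}(\Rm)$. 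Since limits in $L^2_{loc}(\Rm)$ are unique, $g_+ = \tilde g_+$ almost everywhere, which is the claim.

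The argument is essentially bookkeeping and I do not anticipate a genuine obstacle. The only points deserving a word of care are the identity $\bar v_+ = v_+ - \tilde v_+$ (which uses that Lemma~\ref{relation L2 uv} and Definition~\ref{def of wv} are valid for any fixed-time radial $\dot H^1$ function, not only for solutions), and the sign of $\lambda_d$: its nonnegativity in the range $3\le d\le 6$ is exactly what allows us simply to discard the Hardy term $\lambda_d\int |\bar u|^2/|x|^2$ rather than estimate it via Hardy's inequality.
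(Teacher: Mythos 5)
Your proof is correct and follows essentially the same route as the paper: both apply Lemma \ref{relation L2 uv} to the difference $u-\tilde u$ (discarding the nonnegative Hardy and $v_-$ terms), change variables $r=t-\eta$ to compare with the $L^2_{loc}$ limits, and invoke the scattering condition \eqref{scattering condition} to force $g_+=\tilde g_+$. The only difference is presentational—you phrase it via uniqueness of $L^2_{loc}$ limits while the paper writes the same chain of inequalities directly.
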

\begin{proof}
 Given any $\eta_1<\eta_2$, we have
\begin{align*}
 4\int_{\eta_1}^{\eta_2} |g_+(\eta)-\tilde{g}_+(\eta)|^2 d\eta & = \lim_{t\rightarrow +\infty} \int_{\eta_1}^{\eta_2} |v_+(t-\eta,t)-\tilde{v}_+(t-\eta,t)|^2 d\eta \\
 & \leq \lim_{t\rightarrow +\infty} \int_{0}^{\infty} |v_+(r,t)-\tilde{v}_+(r,t)|^2 dr \\
 & \lesssim_d \lim_{t\rightarrow +\infty} \int_{\Rm^d} \left(|\nabla u-\nabla \tilde{u}|^2 + |u_t-\tilde{u}_t|^2\right) dx = 0.
\end{align*}
We apply Lemma \ref{relation L2 uv} in the argument above. It immediately follows that $g_+ = \tilde{g}_+$.
\end{proof}
\paragraph{Necessary condition of scattering} Now let us assume that a radial solution $u$ to (CP1) with a finite energy scatters in the positive time direction. Namely there exists a free wave $\tilde{u}= \mathbf{S}_L(\tilde{u}_0,\tilde{u}_1)$ so that \eqref{scattering condition} holds. Let $g_+, \tilde{g}_+$ be corresponding functions defined in Proposition \ref{introduction of g}. By Lemma \ref{identity of radiation field}, we must have $g_+ = \tilde{g}_+ = \mathbf{T}_+(\tilde{u}_0,\tilde{u}_1)$. As a result, we may expect a radial solution $u$ to scatter in the positive time direction only when the corresponding $g_+$ is contained in the image of the transformation $\mathbf{T}_+$ introduced in Definition \ref{def of T}. The majority of this section is devoted to the proof of the following proposition.

\begin{proposition} \label{homomorphism}
 The operator $\mathbf{T}_+$ introduced in Definition \ref{def of T} is a one-to-one isometry (up to a scalar multiplication) from $\dot{H}_{rad}^1 \times L_{rad}^2 (\Rm^d)$ to $L^2(\Rm)$. More precisely we have
 \begin{itemize}
  \item[(a)] $\left\|\mathbf{T}_+(u_0,u_1)\right\|_{L^2} = (2c_d)^{-1/2} \|(u_0,u_1)\|_{\dot{H}^1 \times L^2}$;
  \item[(b)] The linear operator $\mathbf{T}_+$ is a bijection. 
 \end{itemize}
\end{proposition}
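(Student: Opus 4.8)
The plan is to dispatch part (a) and injectivity quickly from the convergence results already established, and then concentrate on surjectivity, which is the substantive point. For part (a) I would read the answer off Proposition \ref{global L2 linear}. Since for a radial function $\|(u(\cdot,t),u_t(\cdot,t))\|_{\dot{H}^1\times L^2}^2=c_d\int_0^\infty\bigl(|r^{(d-1)/2}u_r|^2+|r^{(d-1)/2}u_t|^2\bigr)\,dr$, which is constant in $t$ by energy conservation of the free wave, and since Proposition \ref{global L2 linear} implies $\int_0^\infty\bigl(|r^{(d-1)/2}u_r|^2+|r^{(d-1)/2}u_t|^2\bigr)\,dr-2\int_0^\infty|g_+(t-r)|^2\,dr\to 0$ as $t\to+\infty$ while $\int_0^\infty|g_+(t-r)|^2\,dr\to\|g_+\|_{L^2(\Rm)}^2$, we conclude $\|(u_0,u_1)\|_{\dot{H}^1\times L^2}^2=2c_d\|g_+\|_{L^2(\Rm)}^2$, i.e.\ $\|\mathbf{T}_+(u_0,u_1)\|_{L^2}=(2c_d)^{-1/2}\|(u_0,u_1)\|_{\dot{H}^1\times L^2}$. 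In particular $\mathbf{T}_+$ is bounded below, hence injective with closed range, so part (b) reduces to showing that the range of $\mathbf{T}_+$ is dense in $L^2(\Rm)$.

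For the density it suffices to approximate every $g\in C_c^\infty(\Rm)$ with $\int_\Rm g=0$, since these are dense in $L^2(\Rm)$ (the constraint $\int g=0$ defines no closed subspace). Fix such a $g$, supported in $[a,b]$, and recall that the one-dimensional profile $\tilde{w}(r,t):=-\int_{t-r}^{t+r}g(\sigma)\,d\sigma$ solves $\partial_t^2\tilde{w}-\partial_r^2\tilde{w}=0$, obeys $\tilde{w}_t(r,t)-\tilde{w}_r(r,t)\equiv 2g(t-r)$, and --- because $\int g=0$ --- is supported in the annulus $\{s-b<r<s-a\}$ at each time $s$. Put $\tilde{u}(r,t):=r^{-(d-1)/2}\tilde{w}(r,t)$; by the identity \eqref{transformation uw}, $\tilde{u}$ solves $\partial_t^2\tilde{u}-\Delta\tilde{u}=\lambda_d r^{-2}\tilde{u}$. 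For a parameter $T\ge 2b$, let $u_T$ be the finite-energy radial free wave with the same Cauchy data as $\tilde{u}$ at $t=T$ --- legitimate, since $\tilde{u}(\cdot,T)$ is smooth and supported in $\{T-b<r<T-a\}$, away from the origin --- and let $(u_{0,T},u_{1,T})\in\dot{H}^1_{rad}\times L^2_{rad}$ denote the Cauchy data of $u_T$ at $t=0$.

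The error $e_T:=u_T-\tilde{u}$ then solves $\partial_t^2 e_T-\Delta e_T=-\lambda_d r^{-2}\tilde{u}$ with zero data at $t=T$. Since $\tilde{u}(\cdot,s)$ is supported in $\{r>s-b\ge s/2\}$ for $s\ge T$, one has $\|r^{-2}\tilde{u}(\cdot,s)\|_{L^2(\Rm^d)}\lesssim_g s^{-2}$, so the energy inequality for the inhomogeneous wave equation gives $\sup_{t\ge T}\|(e_T(\cdot,t),\partial_t e_T(\cdot,t))\|_{\dot{H}^1\times L^2}\lesssim_g\int_T^\infty s^{-2}\,ds\lesssim_g T^{-1}$. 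Writing $r^{(d-1)/2}u_T=\tilde{w}+r^{(d-1)/2}e_T$, the function $v_+$ built from $u_T$ as in Definition \ref{def of wv} satisfies $v_+(t-\eta,t)=2g(\eta)+\bigl[w^{e_T}_t-w^{e_T}_r\bigr](t-\eta,t)$ with $w^{e_T}:=r^{(d-1)/2}e_T$; Lemma \ref{relation L2 uv} bounds $\|w^{e_T}_t-w^{e_T}_r\|_{L^2_r(\Rm^+)}\lesssim\|(e_T,\partial_t e_T)\|_{\dot{H}^1\times L^2}$, so $\|v_+(t-\eta,t)-2g(\eta)\|_{L^2_\eta(\Rm)}\lesssim_g T^{-1}$ uniformly in $t$. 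Letting $t\to+\infty$, and recalling $v_+(t-\eta,t)\to 2\,\mathbf{T}_+(u_{0,T},u_{1,T})(\eta)$ in $L^2_{loc}$ (hence in $L^2(\Rm)$ by the uniform bound), we obtain $\|\mathbf{T}_+(u_{0,T},u_{1,T})-g\|_{L^2(\Rm)}\lesssim_g T^{-1}$. Sending $T\to+\infty$ puts $g$ in the closure of the range; since such $g$ are dense and the range is closed, $\mathbf{T}_+$ is onto.

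The main obstacle is this last step. The obvious candidate for a wave with radiation profile $g$ --- the one-dimensional profile $-\int_{t-r}^{t+r}g$, which is exact in dimension $3$ --- fails to be a legitimate higher-dimensional function near $r=0$ (it behaves like $r^{(3-d)/2}$ there), and it solves the free equation only up to the forcing term $\lambda_d r^{-2}\tilde{u}$. The device that resolves both problems is to introduce the profile only at a large time $T$: the normalization $\int g=0$ then confines it to an annulus far from the origin, and forward-in-time evolution keeps its support at radii comparable to $t$, which is exactly what makes $\lambda_d r^{-2}\tilde{u}$ integrable in time with a constant that vanishes as $T\to+\infty$. Two minor points need care in the write-up: that $e_T$ is a genuine finite-energy solution so that Lemma \ref{relation L2 uv} applies to it, and the density of $\{g\in C_c^\infty(\Rm):\int g=0\}$ in $L^2(\Rm)$.
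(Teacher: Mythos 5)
Your proof is correct, and part (a) plus the injectivity/closed-range reduction coincide with the paper's argument. For surjectivity you use the same core construction as the paper --- the one-dimensional profile $\tilde{w}(r,t)=-\int_{t-r}^{t+r}g$, the identity \eqref{transformation uw} turning it into an approximate free wave with forcing $\lambda_d r^{-2}\tilde{u}$, and the observation that this forcing is time-integrable because the profile lives near the light cone $r\approx t$ --- but you finish differently. The paper takes an arbitrary $g\in C_0^\infty(\Rm)$ (no mean-zero normalization: for $|x|>t+R$ one has $\tilde{u}\simeq C|x|^{-\frac{d-1}{2}}$ with $\tilde{u}_t=0$ and $|\nabla\tilde{u}|\simeq|x|^{-\frac{d+1}{2}}$, which is still finite-energy), shows $\|(\partial_t^2-\Delta)\tilde{u}\|_{L^1L^2([2R,\infty)\times\Rm^d)}\lesssim_d\|g\|_{L^1}R^{-1/2}<\infty$, and then invokes Lemma \ref{u modification} (a wave-operator/Cauchy-sequence argument) together with Lemma \ref{identity of radiation field} to produce a single free wave whose radiation profile is exactly $g$; thus every $C_0^\infty$ function lies in the image on the nose. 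You instead impose $\int g=0$ to confine $\tilde{u}$ to an annulus, match Cauchy data at a large time $T$, estimate the discrepancy by the energy inequality as $O_g(T^{-1})$, and conclude via closedness of the range --- so you avoid Lemma \ref{u modification} and Lemma \ref{identity of radiation field} entirely, at the price of the mean-zero reduction (whose density in $L^2(\Rm)$ you should justify with one line, e.g.\ subtracting a wide shallow bump carrying the same integral) and of landing only in the closure of the range, which is harmless given the isometry. Both completions are sound; the paper's is slightly cleaner in that it needs no normalization of $g$ and no limit in $T$, while yours keeps all estimates on compactly supported annular data and makes the smallness of the error quantitative in $T$.
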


\begin{remark}
Proposition \ref{homomorphism} (along with Proposition \ref{global L2 linear}) is actually the radial version of the following theorem known as ``radiation field'', the details and proof of which can be found in Duyckaerts et al. \cite{dkm3} and Friedlander \cite{radiation1, radiation2}. Although Proposition \ref{homomorphism} appears to be a direct corollary of Theorem \ref{radiation}, we still give our own proof in the radial case for completeness of our theory. 
\end{remark}

\begin{theorem}[Radiation filed] \label{radiation}
Assume that $d\geq 3$ and let $u$ be a solution to the free wave equation $\partial_t^2 u - \Delta u = 0$ with initial data $(u_0,u_1) \in \dot{H}^1 \times L^2(\Rm^d)$. Then 
\[
 \lim_{t\rightarrow +\infty} \int_{\Rm^d} \left(|\nabla u(x,t)|^2 - |u_r(x,t)|^2 + \frac{|u(x,t)|^2}{|x|^2}\right) dx = 0
\]
 and there exists a function $G_+ \in L^2(\Rm \times \mathbb{S}^{d-1})$ so that
\begin{align*}
 \lim_{t\rightarrow +\infty} \int_0^\infty \int_{\mathbb{S}^{d-1}} \left|r^{\frac{d-1}{2}} \partial_t u(r\theta, t) - G_+(r-t, \theta)\right|^2 d\theta dr &= 0;\\
 \lim_{t\rightarrow +\infty} \int_0^\infty \int_{\mathbb{S}^{d-1}} \left|r^{\frac{d-1}{2}} \partial_r u(r\theta, t) + G_+(r-t, \theta)\right|^2 d\theta dr & = 0.
\end{align*}
In addition, the map $(u_0,u_1) \rightarrow \sqrt{2} G_+$ is a bijective isometry form $\dot{H}^2 \times L^2(\Rm^d)$ to $L^2 (\Rm \times \mathbb{D}^{d-1})$.
\end{theorem}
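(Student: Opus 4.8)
The plan is to deduce Theorem~\ref{radiation} from the one-dimensional analysis of Sections~\ref{sec: reduction to 1D}--\ref{sec: radiation} by decomposing the data into spherical harmonics. Fix an orthonormal basis $\{Y_{\ell m}\}$ of $L^2(\mathbb{S}^{d-1})$ with $-\Delta_{\mathbb{S}^{d-1}}Y_{\ell m}=\ell(\ell+d-2)Y_{\ell m}$ and write $(u_0,u_1)=\sum_{\ell,m}(f_{0,\ell m}(r),f_{1,\ell m}(r))Y_{\ell m}(\theta)$. By linearity the solution is $u=\sum_{\ell,m}h_{\ell m}(r,t)Y_{\ell m}(\theta)$, and the identity \eqref{transformation uw} together with $\Delta_{\mathbb{S}^{d-1}}Y_{\ell m}=-\ell(\ell+d-2)Y_{\ell m}$ shows that $W_{\ell m}\doteq r^{(d-1)/2}h_{\ell m}$ solves the one-dimensional equation $\partial_t^2 W_{\ell m}-\partial_r^2 W_{\ell m}=-\mu_\ell r^{-2}W_{\ell m}$ on $\Rm^+\times\Rm$, with $\mu_\ell\doteq \lambda_d+\ell(\ell+d-2)$. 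The algebraic point is $\mu_\ell=\lambda_{d+2\ell}=\tfrac14\big((d+2\ell-1)(d+2\ell-3)\big)\ge 0$, so the $\ell$-th sector is exactly the radial problem in the shifted dimension $d+2\ell$, and a one-line integration by parts (as in Lemma~\ref{relation L2 uv nonradial}) gives the sector identity $\|(f_{0,\ell m},f_{1,\ell m})\|_{\dot{H}^1\times L^2}^2=\int_0^\infty\big(|\partial_r W_{\ell m}|^2+\mu_\ell r^{-2}|W_{\ell m}|^2+|\partial_t W_{\ell m}|^2\big)\,dr$, so that summing over $\ell,m$ recovers $\|(u_0,u_1)\|_{\dot{H}^1\times L^2}^2$.

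Next I would rerun the arguments of Sections~\ref{sec: reduction to 1D}--\ref{sec: radiation} for the one-parameter family $\partial_t^2 W-\partial_r^2 W=-\mu r^{-2}W$, $\mu\ge 0$; in the free case these use only $\mu\ge 0$, the conservation of $\int_0^\infty(|\partial_r W|^2+\mu r^{-2}|W|^2+|\partial_t W|^2)\,dr$, and the $t$-translation and dilation symmetries of the equation, all of which are available here. This yields, for $\mu=\mu_\ell$: a function $g_+^{(\ell m)}\in L^2(\Rm)$ with $\partial_t W_{\ell m}(r,t)\to g_+^{(\ell m)}(t-r)$ and $\partial_r W_{\ell m}(r,t)\to -g_+^{(\ell m)}(t-r)$ in $L^2(\Rm^+)$ as $t\to+\infty$ (the sector form of Proposition~\ref{global L2 linear}); the sector isometry $\|(f_{0,\ell m},f_{1,\ell m})\|^2=2\|g_+^{(\ell m)}\|_{L^2(\Rm)}^2$; and surjectivity of $(f_{0,\ell m},f_{1,\ell m})\mapsto g_+^{(\ell m)}$ onto $L^2(\Rm)$ (the sector form of Proposition~\ref{homomorphism}, the sphere-area constants playing no role because the sector norm above carries none). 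Equivalently one may read all of this off from Proposition~\ref{homomorphism} and Proposition~\ref{global L2 linear} in dimension $d+2\ell$, which is legitimate since the linear part of Sections~\ref{sec: reduction to 1D}--\ref{sec: radiation} is valid in every dimension $\ge 3$.

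Now I reassemble. Put $G_+(s,\theta)\doteq \sum_{\ell,m}g_+^{(\ell m)}(-s)Y_{\ell m}(\theta)$. Parseval on $L^2(\mathbb{S}^{d-1})$ gives $\|(u_0,u_1)\|^2=\sum_{\ell,m}\|(f_{0,\ell m},f_{1,\ell m})\|^2=2\sum_{\ell,m}\|g_+^{(\ell m)}\|^2=2\|G_+\|_{L^2(\Rm\times\mathbb{S}^{d-1})}^2$, i.e. $(u_0,u_1)\mapsto\sqrt2\,G_+$ is isometric; it is onto because every harmonic coefficient of an arbitrary element of $L^2(\Rm\times\mathbb{S}^{d-1})$ is hit by the corresponding sector map, and injective since it is isometric. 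For the two displayed limits I pass the $\ell$-sum through the limit by dominated convergence. Since $r^{(d-1)/2}\partial_t u(r\theta,t)=\sum_{\ell,m}\partial_t W_{\ell m}(r,t)Y_{\ell m}(\theta)$ and $G_+(r-t,\theta)=\sum_{\ell,m}g_+^{(\ell m)}(t-r)Y_{\ell m}(\theta)$, Parseval turns the first $L^2(\Rm^+\times\mathbb{S}^{d-1})$-integral into $\sum_{\ell,m}\|\partial_t W_{\ell m}(\cdot,t)-g_+^{(\ell m)}(t-\cdot)\|_{L^2(\Rm^+)}^2$, whose terms tend to $0$ and are bounded uniformly in $t$ by a fixed multiple of $\|(f_{0,\ell m},f_{1,\ell m})\|^2$; the $\partial_r u$ statement is identical. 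Likewise $\int_{\Rm^d}\big(|\nabla u|^2-|u_r|^2+|u|^2/|x|^2\big)dx=\sum_{\ell,m}\big(\ell(\ell+d-2)+1\big)\int_0^\infty r^{-2}|W_{\ell m}(r,t)|^2\,dr$, each summand tending to $0$ by Lemma~\ref{asymptotic linear} in dimension $d+2\ell$ and dominated by $C\mu_\ell\int_0^\infty r^{-2}|W_{\ell m}|^2\,dr\le C\|(f_{0,\ell m},f_{1,\ell m})\|^2$ for $\ell\ge 1$ (the single sector $\ell=0$ handled directly), so the sum tends to $0$.

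The step I expect to be the main obstacle is the transfer of Sections~\ref{sec: reduction to 1D}--\ref{sec: radiation} to the $\ell$-sectors \emph{uniformly in $\ell$}: one must check that the energy-flux and surface-integral estimates behind Proposition~\ref{global L2 linear} hold for $\partial_t^2 W-\partial_r^2 W=-\mu r^{-2}W$ with constants that remain harmless after being multiplied by $\mu_\ell$ and summed over $\ell$ — the conservation law $\partial_t\big(\tfrac12(W_t^2+W_r^2)+\tfrac{\mu}{2}r^{-2}W^2\big)=\partial_r(W_tW_r)$ makes the required flux monotonicity transparent, and the surface bound follows by integrating it over a light cone — and one must justify the dominated-convergence passages above, which also entails checking that each $W_{\ell m}$ genuinely has the boundary regularity at $r=0$ of a radial $\dot{H}^1\times L^2$ datum in dimension $d+2\ell$. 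Everything else (the spherical-harmonic decomposition, the algebra $\mu_\ell=\lambda_{d+2\ell}$, and Parseval) is bookkeeping. Alternatively one may simply invoke Friedlander \cite{radiation1,radiation2} or Duyckaerts et al.\ \cite{dkm3}, where the non-radial radiation field is constructed directly; the reduction sketched here is the route consistent with the present paper's methods.
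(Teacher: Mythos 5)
Your proposal is essentially sound, but it is worth being clear that it is not parallel to anything in the paper: the paper does not prove Theorem~\ref{radiation} at all. The remark preceding it states explicitly that the full (non-radial) radiation field theorem is quoted from Friedlander \cite{radiation1,radiation2} and Duyckaerts--Kenig--Merle \cite{dkm3}, and the paper only proves its radial shadow, namely Proposition~\ref{global L2 linear} together with Proposition~\ref{homomorphism}, which is all that the later sections use. So your closing ``alternative'' (invoke the references) is in fact the paper's route, while your main argument --- decomposing into spherical harmonics, observing $\lambda_d+\ell(\ell+d-2)=\lambda_{d+2\ell}$ so that the $\ell$-th sector, via $\phi_{\ell m}=r^{-\ell}h_{\ell m}$, is exactly the radial problem in dimension $d+2\ell$, applying the paper's radial linear theory sector by sector (which is legitimate, since the linear parts of Sections~\ref{sec: reduction to 1D}--\ref{sec: radiation} need only $d\geq 3$, not $3\leq d\leq 6$), and reassembling by Parseval --- is a genuinely different and more self-contained derivation that upgrades the paper's radial machinery to the full statement. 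Your accounting is correct where it matters: the sector norm identity, the isometry constant ($\text{sector energy}=2\|g_+^{(\ell m)}\|_{L^2}^2$, giving $\|(u_0,u_1)\|=\|\sqrt{2}G_+\|$), and the identification of $\int_{\Rm^d}(|\nabla u|^2-|u_r|^2+|u|^2/|x|^2)\,dx$ with $\sum_{\ell,m}(\ell(\ell+d-2)+1)\int_0^\infty r^{-2}|W_{\ell m}|^2\,dr$ all check out. The uniformity-in-$\ell$ worry you flag as the main obstacle largely dissolves: the dominating bounds you need (each sector term at time $t$ controlled by an absolute multiple of the conserved sector energy, via the triangle inequality, sector energy conservation, and $\|g_+^{(\ell m)}\|^2\leq\tfrac12\,\text{sector energy}$) carry constants independent of $\ell$, so dominated convergence over the counting measure goes through; the genuine items still to be written out are (i) the standard but nontrivial fact that $f\mapsto r^{-\ell}f$ identifies the $\ell$-th harmonic subspace of $\dot{H}^1\times L^2(\Rm^d)$ with radial $\dot{H}^1\times L^2(\Rm^{d+2\ell})$ data (your ``boundary regularity at $r=0$'' point), (ii) the disposal of the lower-order terms $r^{-1}W_{\ell m}$ when converting between $\partial_r W_{\ell m}$ and $r^{(d-1)/2}\partial_r h_{\ell m}$, which your first displayed limit supplies, and (iii) a continuity/closed-range argument to pass from surjectivity on each sector to surjectivity of the assembled map. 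None of these is a gap in the sense of a failing step; they are the expected glue for this classical reduction, and completing them would give a proof of Theorem~\ref{radiation} in the spirit of, but not contained in, the paper.
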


\paragraph{Proof of isometry} This is a direct consequence of Proposition \ref{global L2 linear} ($u= \mathbf{S}_L(u_0,u_1)$)
\begin{align*}
 \|(u_0,u_1)\|_{\dot{H}^1 \times L^2}^2 &  = c_d \int_{0}^\infty \left(\left|r^{\frac{d-1}{2}}u_r(r,t)\right|^2 + \left|r^{\frac{d-1}{2}} u_t(r,t)\right|^2\right) dr  = \lim_{t \rightarrow +\infty} 2c_d \int_{0}^\infty |g_+(t-r)|^2dr \\
 & = \lim_{t \rightarrow +\infty} 2c_d \int_{-\infty}^t |g_+(\eta)|^2dr = 2c_d \|g_+\|_{L^{2}(\Rm)}^2.
\end{align*}

\paragraph{Proof of bijection}

Since the linear operator $\mathbf{T}_+$ preserves the norm up to a constant, we know that this must be one-to-one. It suffices to show that the image of this operator is dense in $L^2(\Rm)$. In fact we will show that the image contains all smooth and compactly supported functions $g \in C_0^\infty (\Rm)$. The argument consists of two major steps
\begin{itemize}
 \item Given any $g \in C_0^\infty (\Rm)$, we construct a function $\tilde{u}$ that comes with the desired asymptotic behaviour but solves the free wave equation only approximately. 
 \item We then modify $\tilde{u}$ slightly to obtain a solution $u$ that exactly solves the free wave equation and possesses the same asymptotic behaviour. 
\end{itemize}

\paragraph{Construction of $\tilde{u}$} Assume that $g$ is smooth and supported in $[-R,R]$. We define 
\[
 \tilde{u}(x,t) = - |x|^{-\frac{d-1}{2}} \int_{t-|x|}^{t+|x|} g(\eta) d\eta. 
\]
This function is smooth for $t > R$ because for these $t$ we always have $\tilde{u}(x,t) = 0$ in a neighbourhood of $x=0$, the only place where the smoothness might break down. The behaviour of $\tilde{u}$ when $|x|>t+ R$ can also be found by a simple calculation.
\begin{align*}
 &\tilde{u}(x,t) = C_1 |x|^{-\frac{d-1}{2}}, & &\nabla \tilde{u}(x,t) = C_2 |x|^{-\frac{d+3}{2}} x, & &\tilde{u}_t(x,t) = 0,& &|x|>t+R, t>R.&
\end{align*}
Thus we have $(\tilde{u}(\cdot,t), \tilde{u}_t(\cdot,t)) \in C((R,\infty); \dot{H}^1\times L^2)$. We then calculate $\tilde{w}, \tilde{v}_+$ accordingly  
\begin{align*}
 &\tilde{w}(r,t) = - \int_{t-r}^{t+r} g(\eta) d\eta,& &\tilde{v}_+(r,t) = 2g(t-r).& 
\end{align*}
It is clear that $\tilde{w}$ satisfies $(\partial_t^2 - \partial_r^2)\tilde{w} = 0$. Thus by identity \eqref{transformation uw} we have $(\partial_t^2 - \Delta) \tilde{u} = \lambda_d r^{-2} \tilde{u}$. As a result we have the following estimate for $t>R$
\[
  \left|(\partial_t^2 - \Delta) \tilde{u}(x,t)\right| \leq \left\{\begin{array}{ll} 0, & \hbox{if}\; |x|<t-R; \\
  \lambda_d |x|^{-\frac{d+3}{2}} \|g\|_{L^1}, & \hbox{if}\; |x|\geq t-R. \end{array}\right.
\]
This immediately gives us 
\begin{align*}
 \|(\partial_t^2 - \Delta) \tilde{u}\|_{L^1 L^2([2R,\infty)\times \Rm^d)} & \lesssim_d \|g\|_{L^1} \int_{2R}^\infty \left(\int_{|x|\geq t-R} |x|^{-(d+3)} dx\right)^{1/2} dt\\
 &  \lesssim_d \|g\|_{L^1} \int_{2R}^\infty (t-R)^{-3/2} dt \lesssim_1 \|g\|_{L^1} R^{-1/2}.
\end{align*}
Now we have collected sufficient information about our approximation solution $\tilde{u}$. The key tool to find a free wave $u$ with a similar asymptotic behaviour is the following lemma.

\begin{lemma} \label{u modification}
 Let $\tilde{u}$ be a solution to the wave equation 
 \[
   \partial_t^2 \tilde{u} (x,t) - \Delta \tilde{u}(x,t) = \tilde{F}(x,t), \quad (x,t) \in \Rm^d \times [T,\infty)
 \]
 with initial data $(\tilde{u}(\cdot,T), \tilde{u}_t(\cdot,T)) \in \dot{H}^1\times L^2$ and $\tilde{F} \in L^1 L^2([T,+\infty)\times \Rm^d)$. Then there exists a free wave $u$ so that 
 \[
  \lim_{t\rightarrow +\infty} \left\| (\tilde{u}(\cdot,t), \tilde{u}_t(\cdot,t)) - (u(\cdot,t), u_t(\cdot,t))\right\|_{\dot{H}^1 \times L^2} = 0.
 \]
 If $\tilde{u}$ is a radial solution, then $u$ is also radial.
\end{lemma}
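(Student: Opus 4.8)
The plan is to read off the scattering data of $\tilde u$ directly from Duhamel's formula, exploiting that $\{\mathbf{S}_L(t)\}_{t\in\Rm}$ is a one-parameter group of isometries of $\dot{H}^1 \times L^2(\Rm^d)$. Write $\vec{\tilde u}(t) = (\tilde u(\cdot,t),\tilde u_t(\cdot,t))$. The starting point is the identity
\[
 \mathbf{S}_L(-t)\,\vec{\tilde u}(t) = \mathbf{S}_L(-T)\,\vec{\tilde u}(T) + \int_T^t \mathbf{S}_L(-s)\,(0,\tilde F(\cdot,s))\,ds, \qquad t \ge T,
\]
obtained from $\vec{\tilde u}(t) = \mathbf{S}_L(t-T)\vec{\tilde u}(T) + \int_T^t \mathbf{S}_L(t-s)(0,\tilde F(\cdot,s))\,ds$ by applying the bounded operator $\mathbf{S}_L(-t)$ and using the group law $\mathbf{S}_L(-t)\mathbf{S}_L(\tau)=\mathbf{S}_L(\tau-t)$. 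Here the integrals are understood as $\dot H^1\times L^2$-valued Bochner integrals.

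Next I would note that the integrand above is a strongly measurable $\dot H^1\times L^2$-valued function of $s$ whose norm is exactly $\|(0,\tilde F(\cdot,s))\|_{\dot H^1\times L^2} = \|\tilde F(\cdot,s)\|_{L^2}$, since $\mathbf{S}_L(-s)$ is an isometry. As $\tilde F \in L^1 L^2([T,\infty)\times\Rm^d)$, the improper integral $\int_T^\infty \mathbf{S}_L(-s)(0,\tilde F(\cdot,s))\,ds$ converges absolutely in $\dot H^1\times L^2$, so we may define
\[
 (v_0,v_1) \doteq \mathbf{S}_L(-T)\,\vec{\tilde u}(T) + \int_T^\infty \mathbf{S}_L(-s)\,(0,\tilde F(\cdot,s))\,ds \in \dot{H}^1 \times L^2,
\]
and set $u = \mathbf{S}_L(v_0,v_1)$, a free wave with $\mathbf{S}_L(-t)\vec u(t) = (v_0,v_1)$ for every $t$. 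Applying the isometry property once more gives
\[
 \big\|\vec{\tilde u}(t) - \vec u(t)\big\|_{\dot H^1\times L^2}
 = \Big\| \int_t^\infty \mathbf{S}_L(-s)(0,\tilde F(\cdot,s))\,ds \Big\|_{\dot H^1\times L^2}
 \le \int_t^\infty \|\tilde F(\cdot,s)\|_{L^2}\,ds,
\]
and the right-hand side tends to $0$ as $t\to +\infty$ because $\tilde F\in L^1L^2([T,\infty)\times\Rm^d)$; this is the desired convergence. For the last assertion, if $\tilde u$ is a radial solution then $\vec{\tilde u}(T)$ is radial and $\tilde F(\cdot,s) = \partial_t^2\tilde u(\cdot,s) - \Delta\tilde u(\cdot,s)$ is radial for a.e.\ $s$; since $\mathbf{S}_L$ commutes with rotations and the radial functions form a closed subspace of $\dot H^1\times L^2$, the vector $(v_0,v_1)$, hence $u$, is radial.

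I do not expect a genuine obstacle: the only points that need a word of justification are that the Duhamel integral converges as a Bochner integral in $\dot H^1\times L^2$ and commutes with $\mathbf{S}_L(-t)$, both of which follow immediately from the strong continuity and the isometry property of the free propagator together with $\tilde F\in L^1L^2$. In short, the lemma is the standard ``$L^1_tL^2_x$ source term implies scattering'' principle, transcribed to the energy space $\dot H^1\times L^2(\Rm^d)$.
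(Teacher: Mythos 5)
Your proof is correct and follows essentially the same route as the paper: both hinge on the unitarity of $\mathbf{S}_L(t)$ on $\dot{H}^1\times L^2$ together with the $L^1L^2$ bound on $\tilde F$ to show that $\mathbf{S}_L(-t)(\tilde u(\cdot,t),\tilde u_t(\cdot,t))$ converges, and both handle the radial case by closedness of the radial subspace. The only cosmetic difference is that you exhibit the limit data explicitly as a Bochner integral via Duhamel's formula (which also yields the quantitative tail bound $\int_t^\infty\|\tilde F(\cdot,s)\|_{L^2}\,ds$), whereas the paper proves the Cauchy property abstractly via the inhomogeneous Strichartz/energy estimate and invokes completeness.
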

\begin{proof}
First of all, we recall the fact that the linear wave propagation operator $\mathbf{S}_L (t)$ is unitary, apply the Strichartz estimates and obtain 
\begin{align*}
 & \limsup_{t_1,t_2\rightarrow +\infty} \left\|\mathbf{S}_L (-t_1)\begin{pmatrix}\tilde{u}(\cdot,t_1)\\ \tilde{u}_t(\cdot,t_1)\end{pmatrix} - \mathbf{S}_L (-t_2) \begin{pmatrix}\tilde{u}(\cdot,t_2)\\  \tilde{u}_t(\cdot,t_2)\end{pmatrix}\right\|_{\dot{H}^{1} \times L^2 (\Rm^d)}\\
 = &\limsup_{t_1,t_2\rightarrow +\infty} \left\|\mathbf{S}_L (t_2-t_1)\begin{pmatrix}\tilde{u}(\cdot,t_1)\\ \tilde{u}_t(\cdot,t_1)\end{pmatrix} - \begin{pmatrix}\tilde{u}(\cdot,t_2)\\ \tilde{u}_t(\cdot,t_2)\end{pmatrix}\right\|_{\dot{H}^{1} \times L^2 (\Rm^d)}\\
 \lesssim & \limsup_{t_1,t_2\rightarrow +\infty} \|F\|_{L^{1} L^{2}([t_1,t_2] \times \Rm^d)} = 0.
\end{align*}
Because the space $\dot{H}^1 \times L^2$ is complete, there exists $(u_0,u_1) \in \dot{H}^1 \times L^2$, so that
\begin{align*}
 \lim_{t\rightarrow +\infty} & \left\|\mathbf{S}_L (-t)\begin{pmatrix}\tilde{u}(\cdot,t)\\ \tilde{u}_t(\cdot,t)\end{pmatrix} -  \begin{pmatrix} u_0\\  u_1\end{pmatrix}\right\|_{\dot{H}^{1} \times L^2 (\Rm^d)} = 0 \\
 & \Rightarrow \lim_{t\rightarrow +\infty} \left\|\begin{pmatrix}\tilde{u}(\cdot,t)\\ \tilde{u}_t(\cdot,t)\end{pmatrix} -  \mathbf{S}_L (t) \begin{pmatrix} u_0\\  u_1\end{pmatrix}\right\|_{\dot{H}^{1} \times L^2 (\Rm^d)} = 0.
\end{align*}
Thus $u = \mathbf{S}_L (u_0,u_1)$ is the solution we are looking for. Finally if $\tilde{u}$ is radial, then $(u_0,u_1)$ must be radial as well, 
since $\dot{H}_{rad}^1 \times L_{rad}^2$ is a closed subspace of $\dot{H}^1 \times L^2$.  
\end{proof}

\paragraph{Completion of the proof} An application of Lemma \ref{u modification} on the approximation solution $\tilde{u}$ we constructed above gives a free wave $u = \mathbf{S}_L(u_0,u_1)$. We then define $w$, $v_+$, $g_+$ accordingly. Since $\tilde{v}_+(t-\eta,t)\equiv 2g(\eta)$ holds for all $t>\max\{R,\eta\}$, we may apply Lemma \ref{identity of radiation field} again to conclude $\mathbf{T}_+(u_0,u_1) = g_+ = g$.

\section{Global behaviour of solutions}

In this section we prove two main theorems. Assume that $u$ is a solution to (CP1) with a finite energy $E$. Let $g_+$ be the function defined in Proposition \ref{introduction of g}. By Proposition \ref{homomorphism} there exists a free wave $\tilde{u} = \mathbf{S}_L (\tilde{u}_0,\tilde{u}_1)$ so that $\mathbf{T}_+ (\tilde{u}_0,\tilde{u}_1)  = g_+$. Throughout this section we still use the same notations $v_\pm, g_+, \tilde{v}_\pm, \tilde{g}_+$ as defined in Section \ref{sec: reduction to 1D}.

\subsection{Scattering outside a light cone}
 In this section we prove part (a) of Theorem \ref{main 1}. First of all, we may compare the energy $\tilde{E}$ with $E$
\[
 \tilde{E}/c_d = \|g_+\|_{L^2}^2 \leq E/c_d \quad \Rightarrow \quad \tilde{E} \leq E. 
\]
thanks to Proposition \ref{homomorphism} and Proposition \ref{introduction of g}. We still need to show 
\[
 \lim_{t\rightarrow +\infty} \int_{|x|>t-\eta} \left(|\nabla u-\nabla \tilde{u}|^2 + |u_t - \tilde{u}_t|^2 \right) dx = 0.
\]
for any constant $\eta \in \Rm$. We start by splitting the integral above into two parts ($R > \max\{0,-\eta\}$)
 \begin{align*}
  \int_{|x|>t-\eta} \left(|\nabla u-\nabla \tilde{u}|^2 + |u_t - \tilde{u}_t|^2 \right) dx = & \int_{t-\eta<|x|<t+R} \left(|\nabla u-\nabla \tilde{u}|^2 + |u_t - \tilde{u}_t|^2 \right) dx \\
  & \qquad + \int_{|x|>t+R} \left(|\nabla u-\nabla \tilde{u}|^2 + |u_t - \tilde{u}_t|^2 \right) dx.
 \end{align*}
By Proposition \ref{energy flux}, the second term converges to zero uniformly for $t\in [0,\infty)$ as $R\rightarrow 0$, namely
\[
 \lim_{R\rightarrow +\infty} \left\{\sup_{t\geq 0} \int_{|x|>t+R} \left(|\nabla u-\nabla \tilde{u}|^2 + |u_t - \tilde{u}_t|^2 \right) dx\right\} = 0.
\]
Thus it suffices to prove the following limit for all fixed $R>\max\{0,-\eta\}$. 
\begin{equation}
  \lim_{t \rightarrow +\infty} \int_{t-\eta<|x|<t+R} \left(|\nabla u-\nabla \tilde{u}|^2 + |u_t - \tilde{u}_t|^2 \right) dx = 0. \label{to prove part a}
\end{equation}
This immediately follows 
\begin{lemma} \label{convergence u tilde}
 Let $u, \tilde{u}$ be defined as above. Given constants $c,R>0$ and $\gamma \in [0,2\beta(d,p))$, we have
 \begin{align*}
  \lim_{t \rightarrow +\infty} \int_{t-c\cdot t^\gamma<|x|<t+R} \left(|\nabla u-\nabla \tilde{u}|^2 + |u_t - \tilde{u}_t|^2 \right) dx & = 0;\\
  \limsup_{t \rightarrow +\infty} \int_{t-c\cdot t^{2\beta(d,p)}<|x|<t+R} \left(|\nabla u-\nabla \tilde{u}|^2 + |u_t - \tilde{u}_t|^2 \right) dx & \lesssim_{d,E} c;
 \end{align*}
\end{lemma}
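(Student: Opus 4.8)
The plan is to reduce the annular integral to a one–dimensional integral in the radial variable and then combine the two convergence results already established in Section \ref{sec: reduction to 1D}, namely Proposition \ref{middle convergence defocusing} for the nonlinear solution $u$ and Proposition \ref{global L2 linear} for the free wave $\tilde u$. The starting point is the observation that, since $u$ and $\tilde u$ are spatially radial, $|\nabla u - \nabla \tilde u| = |u_r - \tilde u_r|$ pointwise, so for $t$ large enough (so that $t - c\cdot t^\gamma > 0$)
\[
 \int_{t-c\cdot t^\gamma < |x| < t+R}\!\!\!\left(|\nabla u - \nabla \tilde u|^2 + |u_t - \tilde u_t|^2\right)dx = c_d \int_{t - c\cdot t^\gamma}^{t+R}\!\!\left(\left|r^{\frac{d-1}{2}}(u_r - \tilde u_r)\right|^2 + \left|r^{\frac{d-1}{2}}(u_t - \tilde u_t)\right|^2\right)dr.
\]

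Next I would recall that at the beginning of this section the free wave $\tilde u$ was chosen precisely so that $\mathbf T_+(\tilde u_0, \tilde u_1) = g_+$, that is, $\tilde g_+ = g_+$. Inserting $\mp g_+(t-r)$ and using $|a-b|^2 \le 2|a+c|^2 + 2|b+c|^2$ (taking $c = g_+(t-r)$ for the radial derivatives and $c = -g_+(t-r)$ for the time derivatives), the integrand on the right is dominated by twice
\[
 \left|r^{\frac{d-1}{2}}u_r + g_+(t-r)\right|^2 + \left|r^{\frac{d-1}{2}}u_t - g_+(t-r)\right|^2
\]
plus twice the same expression with $\tilde u$ in place of $u$. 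For the $u$–terms I invoke Proposition \ref{middle convergence defocusing} directly on the interval $[t - c\cdot t^\gamma, t+R]$: the $\limsup$ is $0$ when $\gamma \in [0, 2\beta(d,p))$ and is $\lesssim_{d,E} c$ when $\gamma = 2\beta(d,p)$. For the $\tilde u$–terms I enlarge the domain of integration to all of $(0,\infty)$ and apply Proposition \ref{global L2 linear} (with $\tilde g_+ = g_+$), which makes this contribution vanish as $t \to +\infty$ regardless of $\gamma$. Adding the two contributions gives both claimed statements, and \eqref{to prove part a} then follows by taking $\gamma = 0$.

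There is no genuine obstacle here: the content of the lemma is entirely supplied by Propositions \ref{middle convergence defocusing} and \ref{global L2 linear}, and the only points deserving a line of care are the pointwise identity $|\nabla u| = |u_r|$ for radial functions (so the annular $\dot H^1$ integral is exactly the weighted one–dimensional integral appearing in those propositions) and the fact that the free wave was selected so that its radiation profile coincides with $g_+$. The dichotomy between the exact limit and the $\limsup \lesssim_{d,E} c$ bound simply reflects whether Proposition \ref{middle convergence defocusing} yields a vanishing bound or an $O(c)$ bound on the subcone region, the free-wave term being negligible in either case.
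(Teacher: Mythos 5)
Your proposal is correct and follows essentially the same route as the paper: reduce the annular integral to the weighted one-dimensional integral over $[t-c\cdot t^\gamma,\,t+R]$, insert $\mp g_+(t-r)$ via the elementary inequality $|a-b|^2\leq 2|a+c|^2+2|b+c|^2$, and then bound the $u$-terms by Proposition \ref{middle convergence defocusing} and the $\tilde u$-terms (after enlarging to $(0,\infty)$, using $\mathbf{T}_+(\tilde u_0,\tilde u_1)=g_+$) by Proposition \ref{global L2 linear}. The points you flag for care — the radial identity $|\nabla u-\nabla\tilde u|=|u_r-\tilde u_r|$ and the choice of $\tilde u$ with radiation profile $g_+$ — are exactly the ingredients the paper's proof relies on.
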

\begin{proof}
 For any $\gamma \in [0,2\beta(d,p)]$, we may conduct a simple calculation
 \begin{align*}
  & \int_{t-c\cdot t^\gamma<|x|<t+R} \left(|\nabla u-\nabla \tilde{u}|^2 + |u_t - \tilde{u}_t|^2 \right) dx \\
  = & c_d \int_{t-c\cdot t^\gamma}^{t+R} \left(\left|r^{\frac{d-1}{2}}u_r- r^{\frac{d-1}{2}} \tilde{u}_r\right|^2 + \left|r^{\frac{d-1}{2}}u_t - r^{\frac{d-1}{2}}\tilde{u}_t\right|^2 \right) dr \\
  \leq & 2c_d \int_{t-c\cdot t^\gamma}^{t+R} \left(\left|r^{\frac{d-1}{2}}u_r+ g_+\right|^2 + \left|r^{\frac{d-1}{2}}u_t - g_+\right|^2 + \left|r^{\frac{d-1}{2}}\tilde{u}_r+ g_+\right|^2 + \left|r^{\frac{d-1}{2}}\tilde{u}_t - g_+\right|^2\right) dr 
 \end{align*}
We then evaluate the (upper) limits of the integrals in the last line above by Proposition \ref{middle convergence defocusing} and Proposition \ref{global L2 linear} to finish the proof.
\end{proof}

\subsection{Equivalent condition of scattering} 

In this subsection we prove part (b) of theorem, i.e. the solution $u$ scatters if and only if $\tilde{E}= E$. 

\paragraph{Scattering implies $\tilde{E}=E$} Let us assume 
\[
 \lim_{t\rightarrow +\infty} \left\|(u(\cdot,t), u_t(\cdot,t))-(\tilde{u}(\cdot,t)-\tilde{u}_t(\cdot,t))\right\|_{\dot{H}^1 \times L^2} = 0.
\]
This means
\[
 \lim_{t \rightarrow +\infty} \frac{1}{2} \int_{\Rm^d} \left(|\nabla u|^2 + |u_t|^2\right) = \tilde{E}\quad \Rightarrow \quad \lim_{t \rightarrow +\infty} \frac{1}{p+1} \int_{\Rm^d} |u(x,t)|^{p+1} dx = E-\tilde{E}.
\]
According to corollary \ref{limit of potential}, we also have 
\[
 \liminf_{t\rightarrow +\infty} \int_{\Rm^d} |u(x,t)|^{p+1} dx = 0.
\]
Thus we must have $\tilde{E}=E$.

\paragraph{$\tilde{E}=E$ implies scattering} Given any small constant $\eps>0$, by Lemma \ref{asymptotic linear} we can always find a constant $\eta \in \Rm^+$, so that 
\begin{equation}
 \sup_{t> \eta} \int_{|x|<t-\eta} \left(\frac{1}{2}|\nabla \tilde{u}(x,t)|^2 + \frac{1}{2}|\tilde{u}_t(x,t)|\right) dx < \eps. \label{small core}
\end{equation}
We recall the conclusion of part (a)
\begin{equation}
  \lim_{t\rightarrow +\infty} \int_{|x|>t-\eta} \left(|\nabla u(x,t)-\nabla \tilde{u}(x,t)|^2 + |u_t(x,t) - \tilde{u}_t(x,t)|^2 \right) dx = 0. \label{exterior scattering}
\end{equation}
We combine \eqref{small core}, \eqref{exterior scattering} and the energy conservation law of free wave equation to obtain
\begin{align*}
 \liminf_{t\rightarrow +\infty} &\int_{|x|>t-\eta} \left(\frac{1}{2}|\nabla u(x,t)|^2 + \frac{1}{2}|u_t(x,t)|\right) dx \\
 &= \liminf_{t\rightarrow +\infty} \int_{|x|>t-\eta} \left(\frac{1}{2}|\nabla \tilde{u}(x,t)|^2 + \frac{1}{2}|\tilde{u}_t(x,t)|\right) dx \geq \tilde{E}-\eps = E-\eps.
\end{align*}
The energy conservation law of defocusing equation then gives
\[
 \limsup_{t\rightarrow +\infty} \int_{|x|<t-\eta} \left(\frac{1}{2}|\nabla u(x,t)|^2 + \frac{1}{2}|u_t(x,t)|\right) dx \leq \eps. 
\]
Finally we combine this upper limit with \eqref{small core} and \eqref{exterior scattering} to conclude
\begin{align*}
 \limsup_{t\rightarrow +\infty} & \int_{|x|<t-\eta} \left(\frac{1}{2}|\nabla u(x,t)-\nabla \tilde{u}(x,t)|^2 + \frac{1}{2}|u_t(x,t)-\tilde{u}_t(x,t)|\right) dx \leq 4\eps;\\
 & \Rightarrow \limsup_{t\rightarrow +\infty} \int_{\Rm^d} \left(|\nabla u(x,t)-\nabla \tilde{u}(x,t)|^2 + |u_t(x,t)-\tilde{u}_t(x,t)|\right) dx \leq 8\eps.
\end{align*}
This finishes the proof because we may choose arbitrarily small constant $\eps$. 

\subsection{Scattering by energy decay}

In this subsection we prove Theorem \ref{main 2}. We start by explaining the basic idea. Our goal is to show 
\[
 \lim_{t\rightarrow +\infty} \int_{\Rm^d} \left(|\nabla u-\nabla \tilde{u}|^2 + |u_t-\tilde{u}_t|^2\right) dx = 0.
\]
We split the whole space $\Rm^d$ into three regions: $\Sigma_1(t) = \{x\in \Rm^d: |x|<t-ct^{2\beta(d,p)}\}$, $\Sigma_2(t)=\{x\in \Rm^d: t-ct^{2\beta(d,p)}<|x|<t+R\}$ and $\Sigma_3(t)=\{x\in \Rm^d: |x|>t+R\}$. Here $\beta(d,p)$ is defined in Proposition \ref{upper bound of variation v}; $c$ and $R$ are arbitrary positive constants. We then write the integral above as a sum of integrals over these three regions
\[
 \int_{\Rm^d}  \left(|\nabla u(x,t)-\nabla \tilde{u}(x,t)|^2 + |u_t(x,t)-\tilde{u}_t(x,t)|^2\right) dx = I_1(t) + I_2(t) + I_3(t),
\]
with 
\[
 I_j(t) = \int_{\Sigma_j(t)} \left(|\nabla u(x,t)-\nabla \tilde{u}(x,t)|^2 + |u_t(x,t)-\tilde{u}_t(x,t)|^2\right) dx.
\]
The scattering of solution outside the forward light cone $|x|=t+R$ has been proved, namely 
\[
  \lim_{t\rightarrow +\infty} I_3(t) = 0.
\]
In addition, we may apply Proposition \ref{convergence u tilde} and obtain 
\[
 \limsup_{t\rightarrow +\infty} I_2(t) \lesssim_{d,E} c.
\]
We still need to consider the limit of $I_1(t)$. This is clear that 
\[
 I_1(t) \lesssim_1 \int_{\Sigma_1(t)} \left(|\nabla u(x,t)|^2 + |u_t(x,t)|^2\right) dx + \int_{\Sigma_1(t)} \left(|\nabla \tilde{u}(x,t)|^2 + |\tilde{u}_t(x,t)|^2\right) dx.
\]
The latter term converges to zero as $t\rightarrow +\infty$, according to the asymptotic behaviour of free waves given in Lemma \ref{asymptotic linear}. The former term can be dealt with by the following proposition. 

\begin{proposition} \label{center decay}
Assume that $\kappa \in (0,1)$ is a constant. Let $u$ be a solution to (CP1) with initial data $(u_0,u_1)$ so that
 \[
  E_\kappa(u_0,u_1)  \doteq \int_{\Rm^d} (|x|^\kappa+1)\left(\frac{1}{2}|\nabla u_0(x)|^2 + \frac{1}{2}|u_1(x)|^2 + \frac{1}{p+1}|u_0(x)|^{p+1}\right) dx < +\infty. 
 \]
 Then we have the following limit regarding local energy for any constant $c>0$. 
 \[
  \lim_{t\rightarrow +\infty} \int_{|x|<t-c\cdot t^{1-\kappa}} \left(\frac{1}{2}|\nabla u(x,t)|^2 + \frac{1}{2}|u_t(x,t)|^2 + \frac{1}{p+1}|u(x,t)|^{p+1}\right) dx = 0.
 \]
\end{proposition}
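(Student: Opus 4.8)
The plan is to split the ball $B(0,t-ct^{1-\kappa})$ into a ``deep interior'' part and a thin annular part hugging the light cone, and to treat the two by entirely different mechanisms. For a large constant $C>c$ to be fixed at the end, write $B(0,t-ct^{1-\kappa})=B(0,t-Ct^{1-\kappa})\cup A_{c,C}(t)$ with $A_{c,C}(t)=\{x:\,t-Ct^{1-\kappa}\le |x|<t-ct^{1-\kappa}\}$, and correspondingly $E(t;B(0,t-ct^{1-\kappa}))=E(t;B(0,t-Ct^{1-\kappa}))+E(t;A_{c,C}(t))$.

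For the deep interior I would use only the Morawetz estimate, the energy decay, and finite propagation speed. Fix a large $t_0$, set $\eta_0=Ct_0^{1-\kappa}$, $R=t_0-\eta_0/2$. Since $E(t;B(0,t-\eta_0))$ is non-decreasing in $t$ by Proposition \ref{energy flux}, and $B(0,t-\eta_0)\subseteq B(0,R)$ for $t\in[t_0,t_0+\eta_0/2]\subseteq\{|t|>R\}$, one gets $\tfrac{\eta_0}{2}E(t_0;B(0,t_0-\eta_0))\le\int_{|t|>R}E(t;B(0,R))\,dt$. Corollary \ref{energy distribution} bounds the right-hand side by $\int_{-R}^{R}E(t;\{|x|>R\})\,dt$, and finite propagation speed gives $E(t;\{|x|>R\})\le E(0;\{|x|>R-|t|\})$; the hypothesis $E_\kappa<+\infty$ then yields $E(0;\{|x|>\rho\})\lesssim\rho^{-\kappa}E_\kappa$ for $\rho\ge 1$, so $\int_{-R}^{R}E(t;\{|x|>R\})\,dt\lesssim_\kappa E_\kappa R^{1-\kappa}+E\lesssim_\kappa E_\kappa t_0^{1-\kappa}+E$. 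Dividing by $\eta_0/2=Ct_0^{1-\kappa}/2$ produces $\limsup_{t\to+\infty}E(t;B(0,t-Ct^{1-\kappa}))\lesssim_{d,p,\kappa} E_\kappa/C$, which can be made as small as we please by enlarging $C$; note this bound already absorbs the potential term.

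For the near-cone annulus $A_{c,C}(t)$ I would fall back on the one-dimensional reduction of Section \ref{sec: reduction to 1D}. Rewriting the integrand through $v_\pm$ and $w$ and invoking Proposition \ref{middle convergence defocusing}, the gradient and time-derivative contributions of $E(t;A_{c,C}(t))$ are controlled by $\int_{t-Ct^{1-\kappa}}^{t+R}\big(|r^{(d-1)/2}u_r+g_+(t-r)|^2+|r^{(d-1)/2}u_t-g_+(t-r)|^2\big)\,dr$ together with the tail $\int_{ct^{1-\kappa}}^{\infty}|g_+(\eta)|^2\,d\eta$, which tends to $0$ because $g_+\in L^2(\Rm)$. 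The potential energy on $A_{c,C}(t)$ is dealt with by the strengthened pointwise bound $|u(r,t)|\lesssim_{d,E}r^{-2(d-1)/(p+3)}$ of Lemma \ref{pointwise estimate 2}: since $A_{c,C}(t)$ has volume $\simeq t^{d-\kappa}$ and sits at distance $\simeq t$ from the origin, $\int_{A_{c,C}(t)}|u|^{p+1}\,dx\lesssim_{d,E}t^{d-\kappa-2(d-1)(p+1)/(p+3)}\to 0$ whenever $\kappa$ exceeds the threshold $d-2(d-1)(p+1)/(p+3)$, a condition satisfied for every $\kappa\ge\kappa_0(d,p)$. Adding the two regions and then letting $C\to+\infty$ gives the claim.

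The delicate point—and the reason for a ``more careful'' argument than in the three-dimensional case—is the quantitative matching of the two regimes at the critical exponent $\kappa=\kappa_0(d,p)$, where $t^{1-\kappa}=t^{2\beta(d,p)}$ is exactly the width scale on which Proposition \ref{upper bound of variation v} controls $v_\pm$ only with a loss: on an annulus of width $Ct^{2\beta}$ the characteristic-line comparison costs $O_{d,E}(C)$, which competes with the $O(E_\kappa/C)$ produced by the Morawetz half, so a naive optimisation over $C$ leaves a residual $O(\sqrt{E_\kappa})$. I expect this to be the main obstacle, and I would close it either by extracting a decay rate strictly better than $r^{-\beta}$ for $v_\pm-2g_\pm$ out of the weighted bound $E_\kappa<+\infty$ (for instance via a weighted refinement of the surface-flux estimate of Proposition \ref{energy flux integral}), or by iterating the two-region estimate above, using that the annular error depends only on $C$ and not on $c$, so that $\ell(c):=\limsup_{t}E(t;B(0,t-ct^{1-\kappa}))$ satisfies $\ell(c)\le\ell(C)$ for all $C\ge c$ once the annular term is shown to vanish, forcing $\ell(c)=0$. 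Everything else is bookkeeping assembled from the preliminary results already in hand.
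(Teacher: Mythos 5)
There is a genuine gap, and it sits exactly where the proposition is most needed. Your two-region scheme hinges on controlling the near-cone annulus $A_{c,C}(t)$ of width $\simeq Ct^{1-\kappa}$ by Proposition \ref{middle convergence defocusing}, but that proposition only gives a vanishing error for widths $c\cdot t^{\gamma}$ with $\gamma<2\beta(d,p)$, and at the endpoint $\kappa=\kappa_0(d,p)$ (i.e. $1-\kappa=2\beta(d,p)$) it yields an error $\lesssim_{d,E}C$ that \emph{grows} with the very constant $C$ you must send to infinity to kill the interior bound $O(E_\kappa/C)$. You identify this yourself, but neither proposed repair closes it: the ``iteration'' presupposes that the annular contribution vanishes (which is precisely what is unavailable at the endpoint; note also that the trivial monotonicity goes the wrong way, $\ell(C)\le\ell(c)$ for $C\ge c$), and the ``weighted refinement'' of Proposition \ref{energy flux integral} is only conjectured, not carried out. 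Moreover, the statement is for \emph{every} $\kappa\in(0,1)$, with no relation to $\kappa_0(d,p)$: for $\kappa<\kappa_0(d,p)$ one has $1-\kappa>2\beta(d,p)$ and Proposition \ref{middle convergence defocusing} says nothing about your annulus, so the strategy does not even start there. (The endpoint $\kappa=\kappa_0$ is also exactly the case Theorem \ref{main 2} is claimed to cover, so the residual $O(\sqrt{E_\kappa})$ you end up with is fatal for the application as well.)

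The loss is self-inflicted in your interior estimate: after finite speed of propagation you discard the decay of the weighted tail and use only $E(0;\{|x|>\rho\})\lesssim \rho^{-\kappa}E_\kappa$ with the full constant $E_\kappa$, which caps the bound at $E_\kappa/C$ and forces the annulus and the limit $C\to+\infty$. The paper runs essentially your interior argument (monotonicity from Proposition \ref{energy flux}, time-averaging, Corollary \ref{energy distribution}, finite speed of propagation) \emph{directly on the ball} $B(0,t-ct^{1-\kappa})$, but keeps the tail functional $E_\kappa(u_0,u_1;r)=\int_{|x|>r}(1+|x|^\kappa)\bigl(\tfrac12|\nabla u_0|^2+\tfrac12|u_1|^2+\tfrac1{p+1}|u_0|^{p+1}\bigr)dx$, which tends to zero as $r\to+\infty$; splitting the time integral at $|t'|=t-t^{(1-\kappa)/2}$ then gives
\begin{align*}
E\bigl(t;B(0,t-c\,t^{1-\kappa})\bigr)\le \frac{2}{(1-\kappa)c}\,E_\kappa\bigl(u_0,u_1;t^{(1-\kappa)/2}\bigr)+\frac{2E}{c}\,t^{-(1-\kappa)/2}\longrightarrow 0
\end{align*}
for each fixed $c>0$ and every $\kappa\in(0,1)$, with no characteristic-line input and no annulus at all. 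If you replace your last step by this tail-splitting, your ``deep interior'' half alone proves the full proposition and the endpoint difficulty disappears; as written, the proposal does not establish the statement at $\kappa=\kappa_0(d,p)$ nor for $\kappa<\kappa_0(d,p)$.
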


\noindent We postpone the proof of Proposition \ref{center decay} until the final part of this section and first complete the proof of Theorem \ref{main 2}. Let us recall our assumption $\kappa \geq \kappa_0(d,p) = 1 - 2\beta(d,p)$. Thus we have $1-\kappa \leq 2\beta(d,p)$. We may apply Proposition \ref{center decay} and obtain\footnote{Without loss of generality we may also assume $\kappa<1$. Otherwise we may substitute $\kappa$ by an arbitrary $\kappa' \in [k_0(d,p),1)$ because $E_{\kappa'}(u_0,u_1)\lesssim_1 E_\kappa(u_0,u_1)<+\infty$.}
\[
 \limsup_{t\rightarrow +\infty} \int_{\Sigma_1(t)} \left(|\nabla u|^2 + |u_t|^2\right) dx \leq \lim_{t\rightarrow +\infty} \int_{|x|<t-c\cdot t^{1-\kappa}} \left(|\nabla u|^2 + |u_t|^2\right) dx = 0.
\]
Thus $I_1(t) \rightarrow 0$. We collect the (upper) limits of all three terms $I_1(t)$, $I_2(t)$, $I_3(t)$ and put them together
\[
 \limsup_{t\rightarrow +\infty} \int_{\Rm^d}  \left(|\nabla u(x,t)-\nabla \tilde{u}(x,t)|^2 + |u_t(x,t)-\tilde{u}_t(x,t)|^2\right) dx \lesssim_{d,E} c.
\]
Now we are able to conclude the proof of Theorem \ref{main 2} by the arbitrariness of $c$. We conclude this section by giving the proof of Proposition \ref{center decay}.

\begin{proof}[Proof of Proposition \ref{center decay}]
For convenience let us use the notation of local energy introduced in Subsection \ref{sec: energy flux}. 
\[
 E(t;\Sigma) = \int_{\Sigma} \left(\frac{1}{2}|\nabla u(x,t)|^2 + \frac{1}{2}|u_t(x,t)|^2 + \frac{1}{p+1}|u(x,t)|^{p+1}\right) dx.
\]
We fix a large time $t$ and use the finite movement speed of energy as given in Proposition \ref{energy flux} to obtain 
\begin{align*}
  E(t; B(0,|x|<t-c\cdot t^{1-\kappa})) \leq E(t';B(0,t'-c\cdot t^{1-\kappa})), \; \forall t'\geq t.
\end{align*}
Thus we have
\begin{align}
  E(t; B(0,|x|<t-c\cdot t^{1-\kappa})) \leq & \frac{1}{c t^{1-\kappa}}\int_t^{t+c\cdot t^{1-\kappa}} E(t';B(0,t'-c\cdot t^{1-\kappa})) dt'\nonumber\\
  \leq & \frac{1}{c t^{1-\kappa}}\int_t^{\infty} E(t';B(0,t)) dt' \nonumber\\
  \leq & \frac{1}{c t^{1-\kappa}}\int_{-t}^{t} E(t'; \{x\in \Rm^d: |x|>t\}) dt'.\label{upper bound integral small t}
\end{align}
We apply Proposition \ref{energy distribution} in the last step above. Next we use the finite movement speed of energy again to give an upper bound of the local energy involved in \eqref{upper bound integral small t} 
\begin{align*}
 E(t'; \{x\in \Rm^d: |x|>t\}) & \leq \int_{|x|>t-|t'|} \left(\frac{|\nabla u_0(x)|^2}{2} + \frac{|u_1(x)|^2}{2} + \frac{|u_0(x)|^{p+1}}{p+1}\right) dx \\
 & \leq (t-|t'|)^{-\kappa} E_\kappa (u_0,u_1;t-|t'|). 
\end{align*}
Here $E_\kappa(u_0,u_1;r)$ is a decreasing function of $r$ defined by
\[
 E_\kappa(u_0,u_1;r) = \int_{|x|>r} (|x|^\kappa+1)\left(\frac{1}{2}|\nabla u_0(x)|^2 + \frac{1}{2}|u_1(x)|^2 + \frac{1}{p+1}|u_0(x)|^{p+1}\right) dx.
\]
It converges to zero as $r\rightarrow +\infty$. In addition, it is clear that $E(t'; \{x\in \Rm^d: |x|>t\}) \leq E$ always hold for all $t'$. As a result, we may find an upper bound of the integral in \eqref{upper bound integral small t}. 
\begin{align*}
 \int_{-t}^{t} E(t'; \{x\in \Rm^d: |x|>t\}) dt' & \leq \int_{-t+t^{(1-\kappa)/2}}^{t-t^{(1-\kappa)/2}} E(t'; \{x\in \Rm^d: |x|>t\}) dt' + 2t^{(1-\kappa)/2} E\\
 & \leq \int_{-t+t^{(1-\kappa)/2}}^{t-t^{(1-\kappa)/2}} (t-|t'|)^{-\kappa} E_\kappa (u_0,u_1;t-|t'|) dt' + 2t^{(1-\kappa)/2} E\\
 & \leq \int_{-t+t^{(1-\kappa)/2}}^{t-t^{(1-\kappa)/2}} (t-|t'|)^{-\kappa} E_\kappa (u_0,u_1;t^{(1-\kappa)/2}) dt' + 2t^{(1-\kappa)/2} E\\
 & \leq \frac{2t^{1-\kappa}}{1-\kappa} E_\kappa (u_0,u_1;t^{(1-\kappa)/2}) + 2t^{(1-\kappa)/2} E.
\end{align*}
Finally we plug this upper bound in \eqref{upper bound integral small t}, let $t\rightarrow +\infty$ and finish the proof
\[
 E(t; B(0,|x|<t-c\cdot t^{1-\kappa})) \leq \frac{2}{(1-\kappa)c} E_\kappa (u_0,u_1;t^{(1-\kappa)/2}) + \frac{2E}{c} t^{-(1-\kappa)/2}\rightarrow 0.
\]
\end{proof}

\end{document}